\DeclarePairedDelimiterX{\infdivx}[2]{(}{)}{%
  #1\;\delimsize\|\;#2%
}
\newcommand{\infdiv}{\mathrm{KL}\infdivx}
\newcommand{\sC}{\mathcal{C}}
\newcommand{\sE}{\mathcal{E}}
\newcommand{\sL}{\mathcal{L}}
\newcommand{\sG}{\mathcal{G}}
\newcommand{\sT}{\mathcal{T}}
\newcommand{\sS}{\mathcal{S}}
\newcommand{\sF}{\mathcal{F}}
\newcommand{\sM}{\mathcal{M}}
\newcommand{\R}{\mathbb{R}}
\newcommand{\N}{\mathbb{N}}
\DeclareMathOperator{\E}{\mathbf{E}}
\newcommand{\eps}{\varepsilon}
\let\Pr\relax
\DeclareMathOperator{\Pr}{\mathbf{P}}
\newcommand{\V}{\mathbf{Var}}
\DeclareMathOperator{\TV}{TV}
\DeclareMathOperator{\Ber}{Bernoulli}
\DeclareMathOperator{\Bin}{Binomial}
\DeclareMathOperator{\Dirichlet}{Dirichlet}
\DeclareMathOperator{\Beta}{Beta}
\DeclareMathOperator{\Geo}{Geo}
\DeclareMathOperator*{\argmax}{argmax}
\DeclareMathOperator{\UA}{UA}
\DeclareMathOperator{\Aut}{Aut}
\let\originalleft\left
\let\originalright\right
\def\left#1{\mathopen{}\originalleft#1}
\def\right#1{\originalright#1\mathclose{}}
\newcommand{\dif}{\mathop{}\!\mathrm{d}}
\newcommand*{\ie}{\emph{i.e.,}\@\xspace}
\newcommand*{\eg}{\emph{e.g.,}\@\xspace}
\newcommand{\etal}{\emph{et al}\@ifnextchar.{}{.\@}}
\newcommand{\whp}{\emph{w.h.p}\@ifnextchar.{}{.\@}}
\newcommand{\seclabel}[1]{\label{sec:#1}}
\newcommand{\secref}[1]{\mbox{Section~\ref{sec:#1}}}
\newcommand{\applabel}[1]{\label{app:#1}}
\newcommand{\appref}[1]{\mbox{Appendix~\ref{app:#1}}}
\newcommand{\figlabel}[1]{\label{fig:#1}}
\newcommand{\figref}[1]{\mbox{Figure~\ref{fig:#1}}}
\newcommand{\eqlabel}[1]{\label{eq:#1}}
\renewcommand{\eqref}[1]{(\ref{eq:#1})}
\newtheorem{thm}{Theorem}{\bfseries}{\itshape}
\newcommand{\thmlabel}[1]{\label{thm:#1}}
\newcommand{\thmref}[1]{Theorem~\ref{thm:#1}}
\numberwithin{thm}{section}
\newtheorem{lem}[thm]{Lemma}{\bfseries}{\itshape}
\newcommand{\lemlabel}[1]{\label{lem:#1}}
\newcommand{\lemref}[1]{Lemma~\ref{lem:#1}}
\newtheorem{conj}[thm]{Conjecture}{\bfseries}{\itshape}
\newcommand{\conjlabel}[1]{\label{lem:#1}}
\newcommand{\conjref}[1]{Conjecture~\ref{lem:#1}}
\newtheorem{cor}[thm]{Corollary}{\bfseries}{\itshape}
\newcommand{\corlabel}[1]{\label{cor:#1}}
\newcommand{\corref}[1]{Corollary~\ref{cor:#1}}
\newtheorem{prop}[thm]{Proposition}{\bfseries}{\itshape}
\newcommand{\proplabel}[1]{\label{prop:#1}}
\newcommand{\propref}[1]{Proposition~\ref{prop:#1}}
\newtheorem{assumption}[thm]{Assumption}{\bfseries}{\rm}
\theoremstyle{definition}
\theoremstyle{plain}
\definecolor{linkblue}{named}{Blue}
\begin{document}
\title[On Discovery Of The Seed In UA Trees]{On The Discovery Of The Seed \\ In Uniform Attachment Trees}
\date{\today}
\author{Luc Devroye}
\author{Tommy Reddad}
\email{lucdevroye@gmail.com \\
tommy.reddad@gmail.com}
\address{School of Computer Science, McGill University, 3480 University Street, Montr\'{e}al, Qu\'{e}bec, Canada, H3A 2K6}

\subjclass[2010]{Primary: 05C80}
\keywords{Random trees, seed tree, root finding, uniform attachment.}

\maketitle

\begin{abstract}
  We investigate the size of vertex confidence sets for including part
  of (or the entirety of) the seed in seeded uniform attachment trees,
  given knowledge of some of the seed's properties, and with a
  prescribed probability of failure. We also study the problem of
  identifying the leaves of a seed in a seeded uniform attachment
  tree, given knowledge of the positions of all internal nodes of the
  seed.
\end{abstract}

\section{Introduction}
In the web graph, nodes represent websites, and edges represent
links. Its theoretical study requires proper graph models that explain
and approximate what is observed in the field---see \cite{bonato} for
an early book on this topic. One particular set of models grows the
web graph dynamically, one new website at a time. This leads to the
study of random graph dynamics~\cite{durrett, remco}. The main---but
still simple---models here are the uniform random recursive tree and
the preferential attachment model. Simple generalizations of these
tree models in which $k$ instead of one parent are selected at each
step lead to graphs. We are concerned in this paper with the
estimation of the origin of the tree when one is shown the entire
(unrooted) tree. In particular, we consider only uniform attachment
trees that are grown started from a fixed tree $S$, called the seed, a
problem first studied by Bubeck \etal~\cite{seed-influence,
  bubeck-influence1}. Estimating the seed can aid, for example, in the
identification of the source of a rumour, an idea, or an
epidemic. Finding the source of an epidemic can help to identify the
original causes for the spread of the infection, and aid in the
development of preventative measures. The area of discovering the
origins of the web graph or a social network graph is also called
\emph{network archeology}.

Consider the random tree which begins as some fixed tree $S$ and grows
incrementally by adding a new vertex and connecting it to a uniformly
random node among all nodes. We think of the starting tree as the
\emph{seed} of the process, and we continue the attachment process for
a long time. We ask the following question: How difficult is it to
identify the position of the seed in such a process only given the
structure of the tree? In general, what aspects of the seed can be
identified efficiently?

The growth process, started from a single node, yields the uniform
random recursive tree (URRT) or uniform attachment tree (see
\eg~\cite{devroye-records, drmota, moon, na-rapoport}). We take the
following point of view, following \cite{finding-adam}: Given the
uniform attachment tree $T$, and a fixed $\eps > 0$, our algorithm
returns a set $H = H(T, \eps)$ of nodes such that, in the case that
$|S| = 1$,
\[
  \Pr\{V(S) \subseteq H\} \ge 1 - \eps ,
\]
where $V(S)$ denotes the set of vertices of $S$. That this is even
possible regardless of the size of $T$ is interesting. In
\cite{finding-adam}, it is shown that there exist universal constants
$c_1, c_2 > 0$ such that for any algorithm,
\[
  |H| \ge c_1 \exp \left\{ c_2 \sqrt{\log (1/\eps) } \right\} .
\]
Furthermore, an algorithm is given in \cite{finding-adam} that has for
some other universal constants $c_1, c_2 > 0$,
\[
  |H| \le c_1 \exp \left\{ c_2 \frac{ \log (1/\eps) }{\log \log (1/\eps) } \right\} .
\]

Consider now seeds $S$ with $k$ vertices and $\ell$ leaves, where $k$
and $\ell$ are known. We study algorithms that return sets $H$ or
$H^*$, both depending upon $k$, $\ell$, and $\eps$, having the
properties that
\[
  \Pr\{V(S) \subseteq H\} \ge 1 - \eps ,
\]
and
\[
  \Pr\{|V(S) \cap H^*| \ge 1\} \ge 1 - \eps ,
\]
respectively. To be a bit more formal, if we write $L(T)$ for the set
of leaves of the tree $T$, $A^{(m)}$ to be the set of all $m$-sized
subsets of the set $A$, $\sT$ for the space of all unlabelled trees,
and $V(\sT)$ for the set of all vertices in these trees, we consider
algorithms with input $T$ (our tree), $k$, $\ell$, and $\eps$, and
with $K$-sized set-valued output, and introduce the optimal sizes
\begin{align*}
  K(k, \ell, \eps) &= \min\left\{
                     m \colon \begin{array}{l} \displaystyle
                                \exists H_{m, k, \ell, \eps} \colon \sT \to V(\sT)^{(m)} , \text{ such that} \\ \displaystyle
                                \min_{\substack{S \colon |S| = k \\ |L(S)| = \ell}} \Pr\{V(S) \subseteq H_{m, k, \ell, \eps}(T)\} \ge 1 - \eps
                              \end{array}
  \right\} , \\
  K^*(k, \ell, \eps) &= \min\left\{
                       m \colon \begin{array}{l} \displaystyle
                                  \exists H^*_{m, k, \ell, \eps} \colon \sT \to V(\sT)^{(m)} , \text{ such that} \\ \displaystyle
                                  \min_{\substack{S \colon |S| = k \\ |L(S)| = \ell}} \Pr\{|V(S) \cap H^*_{m, k, \ell, \eps}(T)| \ge 1\} \ge 1 - \eps 
                                \end{array}
  \right\} . \\
\end{align*}
In the case that $k = 1$, the seed is a single node, and we simply
write $K(\eps) = K(1, 1, \eps)$.

We can also introduce $K(S, \eps)$ and $K^*(S, \eps)$, the analogous
quantities in which the full structure of $S$ is assumed, where we now
have
\begin{align*}
  K(S, \eps) &= \min\Big\{m \colon \exists H_{m, S, \eps} \colon \sT \to V(\sT)^{(m)} ,\, \Pr\{V(S) \subseteq H_{m, S, \eps}(T)\} \ge 1 - \eps \Big\} , \\
  K^*(S, \eps) &= \min\Big\{m \colon \exists H^*_{m, S, \eps} \colon \sT \to V(\sT)^{(m)} ,\, \Pr\{|V(S) \cap H^*_{m, S, \eps}(T)| \ge 1\} \ge 1 - \eps \Big\} .
\end{align*}

Our main results are as follows:
\begin{thm}\thmlabel{heart-upper}
  There are universal constants $c, \eps_0 > 0$ such that if
  $\eps \le \eps_0$, then
  \[
    K^*(k, \ell, \eps) \le c (1/\eps)^{2/k} \log (1/\eps) .
  \]
\end{thm}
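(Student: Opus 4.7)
The plan is to take $H^*$ to be the $K$ vertices of smallest value under the Shah--Zaman centroid statistic
\[
  \psi(v) = \max_{u \sim v}\,|T_{v \to u}|,
\]
where $T_{v \to u}$ is the component of $T$ containing $u$ when the edge $\{v,u\}$ is removed, and $K = c(1/\eps)^{2/k}\log(1/\eps)$. Writing $\sH_t = \{v \in V(T) : \psi(v) \le (1-t)n\}$ for the ``heart'' at scale $t$, I would decompose the failure probability as
\[
  \Pr\{V(S) \cap H^* = \emptyset\} \le \Pr\Big\{\min_{i \in \{1,\ldots,k\}} \psi(v_i) > (1-t)n\Big\} + \Pr\{|\sH_t| > K\},
\]
and optimize at $t \asymp \eps^{2/k}$ so that each term is $O(\eps)$.

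For the heart-size term, root $T$ at its centroid and observe that $\psi(v) = \max(n - s(v), \max_c s(c))$, where $s(v)$ is the subtree size at $v$ and the maximum is over children; thus $v \in \sH_t$ forces $s(v) \ge tn$. The set $\{v : s(v) \ge tn\}$ is ancestor-closed in the rooted tree, and I would control its cardinality using the URRT subtree-size moments ($\E[s(i)] = n/i$) together with concentration, yielding $|\sH_t| = O((1/t)\log(1/t))$ with probability at least $1-\eps/2$, which matches $K$ at $t \asymp \eps^{2/k}$.

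For the tail term, the central tool is the $k$-colour P\'olya urn structure of the branch sizes. Let $A_i(n)$ denote the number of vertices in the component of $T \setminus E(S)$ containing $v_i$; then $(A_1(n)/n, \ldots, A_k(n)/n)$ converges almost surely to a $\Dirichlet(1, \ldots, 1)$ random vector. Each $\psi(v_i)$ is dominated by the $A_j$'s on the seed-side branches of $v_i$ together with the max-subtree statistic within the branch $T_i$ (which itself is a URRT on $A_i$ vertices rooted at $v_i$, with the GEM$(0,1)$ stick-breaking limit governing its subtree partition at the root). Combining the $\Beta(1, k-1)$ marginal tails with the GEM tails, the target is
\[
  \Pr\Big\{\min_i \psi(v_i) > (1-t)n\Big\} = O(t^{k/2}),
\]
which gives $O(\eps)$ after $t \asymp \eps^{2/k}$.

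The main obstacle is the tail estimate with the half-power $k/2$. Per-seed-vertex tails are only of order $t$, and the $k$ seed vertices interact through the simplex constraint $\sum_j A_j(n) = n - k$ together with the shared within-branch URRT geometry; the crux is to organize the joint bad event into effectively $k/2$ approximately independent failure contributions, rather than relying on a union bound that would only yield $\Theta(t)$. A uniform-in-$S$ argument accommodating every seed shape with $|S| = k$ and $|L(S)| = \ell$ will also be required, since $\psi(v_i)$ behaves differently at seed leaves (dominated by a single seed-side component) than at internal seed vertices (split among several).
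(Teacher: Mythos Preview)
Your decomposition and choice of statistic match the paper exactly, and your target scale $t \asymp \eps^{2/k}$ is the right one. The gap is precisely the point you flag as ``the main obstacle'': you do not yet have a mechanism that produces the exponent $k/2$ in
\[
  \Pr\Bigl\{\min_{1\le i\le k}\psi(v_i) > (1-t)n\Bigr\} = O(t^{k/2}),
\]
and the GEM/stick-breaking route you sketch does not lead there. Per-vertex tails are indeed only $O(t)$, and there is no evident way to manufacture ``$k/2$ approximately independent failure contributions'' out of the joint Dirichlet structure for a general seed shape.

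The missing idea is short: do not analyse the minimum over all seed vertices, but upper-bound it by a single well-chosen seed vertex. Let $r$ be a \emph{centroid of the seed} $S$ (not of $T$); every component of $S\setminus\{r\}$ then has at most $k/2$ vertices. For any neighbour $v$ of $r$ in $T$, the component $(T,r)_{v\downarrow}$ is contained in $\bigcup_{u\in C}(T,S)_{u\downarrow}$ for some component $C$ of $S-\partial_S(r)$ (including the singleton $C=\{r\}$ for non-seed neighbours). Hence
\[
  \min_i \psi(v_i)\ \le\ \psi(r)\ \le\ \max_{C}\ \sum_{u\in C}|(T,S)_{u\downarrow}|,
\]
and by the P\'olya/Dirichlet limit each normalized sum converges to $\Beta(|C|,k-|C|)$ with $|C|\le k/2$, which is stochastically dominated by $\Beta(k/2,k/2)$. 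A union bound over the at most $k$ components and the elementary tail $\Pr\{\Beta(k/2,k/2)\ge 1-t\}=O\bigl(3^{k}\,t^{k/2}\bigr)$ give exactly the $t^{k/2}$ you need. This handles every seed shape uniformly, with no separate treatment of seed leaves versus internal vertices.

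For the heart-size term, your plan is workable but more awkward than necessary: rooting at the (random) centroid of $T$ and invoking $\E[s(i)]=n/i$ mixes two different rootings. The paper instead works in the labelled space: for any $i>K$ one has $\psi(u_i)\ge \min_{1\le j\le K}\sum_{m\ne j}|T_m|$ where $T_1,\ldots,T_K$ are the components after deleting the edges among $\{u_1,\ldots,u_K\}$, and $\frac{1}{n}\sum_{m\ne j}|T_m|\to\Beta(K-1,1)$, giving $\Pr\{\exists i>K:\psi(u_i)\le (1-t)n\}\le K(1-t)^{K-1}\le K e^{-t(K-1)}$ directly.
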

The following result shows that for a fixed $k$, the dependence of
$K^*(k, \ell, \eps)$ is at most subpolynomial in $1/\eps$.
\begin{thm}\thmlabel{heart-upper-subpolynomial}
  There are universal constants $c_1, c_2, c_3, c_4 > 0$ such that for
  $k > c_1$ and $\eps \le \exp\{-c_2  (\log k)^{11}\}$,
  \[
    K^*(k, \ell, \eps) \le c_3 \exp\left\{c_4 \frac{\log (1/\eps)}{\log \log (1/\eps) + \log \log k} \right\} .
  \]
\end{thm}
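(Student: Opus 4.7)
The plan is to adapt the iterative centroid-refinement algorithm of \cite{finding-adam}, the \emph{finding-Adam} algorithm for single-seed uniform attachment trees, to the $k$-seed setting. Since the theorem's exponent differs from that of single-seed finding-Adam only by the addition of $\log\log k$ in the denominator, the approach should recycle the finding-Adam machinery while extracting a single $\log\log k$ factor of improvement from the size of the seed. In particular, since we only need to hit \emph{some} seed vertex, it is natural to target the most central one.

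First, I would focus on a distinguished seed vertex, namely the centroid $v^{*}$ of $S$, and aim to show that $v^{*} \in H^*$ with probability at least $1-\eps$. As the centroid, $v^{*}$ has the property that every subtree of $S \setminus \{v^{*}\}$ hanging off $v^{*}$ has size at most $\lfloor k/2 \rfloor$, and the total mass of these subtrees is $k-1$.

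Second, I would analyze the subtree sizes at $v^{*}$ in the big tree $T$. Because each new vertex in the UA process attaches uniformly over all current vertices, the sizes of the subtrees hanging off $v^{*}$ in $T$ evolve according to a generalized P\'olya urn: the probability that a new vertex is added to the subtree of a fixed $u \sim_S v^{*}$ is proportional to that subtree's current size. The crucial point is that this urn has total initial mass $k-1$ spread over $\deg_S(v^{*}) \ge 1$ urns, rather than the trivial initial condition of a single ball of mass $1$ as in the single-seed case.

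Third, I would use a centroid-based score such as $\psi(v) = \max_{u \sim v} |T_u^v|$, where $T_u^v$ is the connected component of $u$ in $T \setminus \{uv\}$, and run the finding-Adam iterative refinement on it. The failure event $\{v^{*} \notin H^*\}$ then becomes a tail event comparing $\psi(v^{*})$ to the scores of competitor vertices. The improved initial condition in the P\'olya urn yields a stronger Dirichlet-type concentration for the subtree sizes at $v^{*}$, and hence a tighter upper tail bound on $\psi(v^{*})$; this is precisely the source of the $\log\log k$ improvement in the denominator. The hypothesis $\eps \le \exp\{-c_2 (\log k)^{11}\}$ leaves headroom for the polylog losses that accumulate through the $O(\log\log(1/\eps))$ iterations of the refinement.

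The principal technical challenge is to track the $\log\log k$ improvement quantitatively through both the P\'olya urn concentration and the iterated refinement, ensuring that it is not absorbed into polylog error terms at any step. Once that is established, balancing the parameters to obtain simultaneously failure probability at most $\eps$ and final set size $\exp\{c_4 \log(1/\eps)/(\log\log(1/\eps) + \log\log k)\}$ is a routine parameter-chasing exercise, with the constant $11$ in the hypothesis on $\eps$ chosen just large enough to absorb the cumulative polylog losses.
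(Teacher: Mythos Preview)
The proposal conflates two distinct estimators and their analyses. The subpolynomial-in-$1/\eps$ bound, both in \cite{finding-adam} and here, comes from the product estimator $\varphi_T(u) = \prod_{v \ne u} |(T,u)_{v\downarrow}|$, not from the max-subtree estimator $\psi$. With $\psi$, the failure event reduces to a Beta tail exactly as in \propref{heart-upper}, and Beta tails are inherently polynomial in $1/\eps$: this is why the $\psi$ analysis yields $c(1/\eps)^{2/k}\log(1/\eps)$ and cannot be pushed to subpolynomial. There is also no ``iterative centroid-refinement'' in \cite{finding-adam}; the subpolynomial bound there is obtained by a one-shot analysis of $H^*_{\varphi;K}$ via a path-encoding scheme (labels $v=(u,j_1,\dots,j_\ell)$ with weight $s(v)=\sum_i (\ell-i+1)j_i$) together with a product-of-uniforms tail bound. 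Your reference to ``$O(\log\log(1/\eps))$ iterations of the refinement'' does not correspond to anything in that argument.

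Your claimed source of the $\log\log k$ gain is likewise off. The paper does not target the centroid of $S$; it targets the seed vertex $u_*$ minimizing $\varphi$ and then union-bounds over all $u\in S$, comparing $\varphi(u)$ to $\varphi(v)$ for $v$ in the subtree $(T,S)_{u\downarrow}$. The structural fact driving the improvement is that
\[
\frac{|(T,S)_{(u,j_1,\ldots,j_i)\downarrow}|}{n} \overset{d}{\to} B\prod_{m=1}^{i} U_{j_m,m}, \qquad B\sim\Beta(1,k-1),
\]
so every subtree size under a seed vertex carries an extra factor $B$ of typical order $1/k$. This $B$ enters the $\varphi$ comparison through a term $\Pr\{B/(1-B)\ge 2/q^{1/\ell}\}\le \exp\{-(k-1)/q^{1/\ell}\}$, and it is this exponential in $k$ that, after optimizing $q$ and $t$, produces the additive $\log\log k$ in the denominator. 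By contrast, your urn observation about the centroid---initial mass $k-1$ spread over $\deg_S(v^*)$ colours, yielding $\Beta(k/2,k/2)$-type concentration for $\psi(v^*)$---is precisely the mechanism behind \propref{heart-upper}, not \thmref{heart-upper-subpolynomial}.
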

We note that the bound in \thmref{heart-upper} is better than that of
\thmref{heart-upper-subpolynomial} if $k$ is much larger than
$\log \log (1/\eps)$.

\begin{thm}\thmlabel{heart-lower}
  \[
    K^*(k, \ell, \eps) \ge K((e \eps)^{1/k}) .
  \]
\end{thm}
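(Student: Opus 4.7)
The plan is to turn any $k$-seed algorithm $H^*$ of size $m = K^*(k, \ell, \eps)$ into a $1$-seed algorithm of the \emph{same} size with failure probability at most $(e\eps)^{1/k}$; by the definition of $K$ this will give $K((e\eps)^{1/k}) \le m = K^*(k, \ell, \eps)$, which is exactly the claim. Concretely, I would simply set $H \coloneqq H^*$ and then bound $\Pr\{v_0 \notin H^*(T)\}$ from above when $T$ is a URRT with root $v_0$.

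The first step is a coupling. Let $V_k = \{v_0, v_1, \ldots, v_{k-1}\}$ be the first $k$ vertices of $T$ in URRT insertion order, and let $S^* = T[V_k]$ be the induced subtree. Conditionally on $S^*$, the remainder of the URRT process is a UA process with seed $S^*$, so $T$ is distributed as the UA tree seeded by $S^*$. Restricting to the event $\{|L(S^*)| = \ell\}$ and invoking the $k$-seed guarantee of $H^*$ yields
\[
  \Pr\{V_k \cap H^*(T) = \emptyset \mid |L(S^*)| = \ell\} \le \eps.
\]

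The heart of the argument is then to pass from the probability of missing \emph{every} vertex of $V_k$ to the probability of missing $v_0$ alone. I would prove the key inequality
\[
  \Pr\{v_0 \notin H^*(T) \mid |L(S^*)| = \ell\}^k \le e \cdot \Pr\{V_k \cap H^*(T) = \emptyset \mid |L(S^*)| = \ell\} ,
\]
which combined with the previous line yields $\Pr\{v_0 \notin H^*(T) \mid |L(S^*)| = \ell\} \le (e\eps)^{1/k}$. A small additional argument, working with the URRT distribution conditioned on $|L(S^*)| = \ell$ (which is itself a natural ``$\ell$-leaf seeded UA tree'' distribution), removes the conditioning and delivers an unconditional $1$-seed algorithm of size $m$ with failure probability at most $(e\eps)^{1/k}$.

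The main obstacle is the displayed inequality. The intuition is that $v_1, \ldots, v_{k-1}$ all lie within graph distance $k - 1$ of $v_0$, so the events $\{v_i \notin H^*(T)\}$ are strongly positively correlated: whenever $v_0$ is missed by $H^*$, the early vertices clustered around it tend to be missed as well. To make this quantitative I would analyze the joint distribution of $v_0, \ldots, v_{k-1}$ conditionally on the unlabeled tree $T$ via the hook-length-type formula for the number of recursive labelings of a subtree, and then use Stirling's lower bound $k! \ge (k/e)^k$ to extract the constant $e$.
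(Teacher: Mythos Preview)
Your reduction strategy is the reverse of the paper's, and the two obstacles you flag are both genuine gaps, not details.

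\textbf{The displayed correlation inequality is the entire proof, and you have not proved it.} You need, for an \emph{arbitrary} set-valued function $H^*$ of size $m$,
\[
  \Pr\{v_0 \notin H^*(T)\}^k \le e \cdot \Pr\{v_0, \dots, v_{k-1} \notin H^*(T)\} .
\]
Your justification is a heuristic about positive correlation plus the phrase ``Stirling's lower bound $k! \ge (k/e)^k$.'' But positive association of the events $\{v_i \notin H^*\}$ would only give $\Pr\{\cap_i\} \ge \prod_i \Pr\{v_i \notin H^*\}$; to reach $\Pr\{v_0 \notin H^*\}^k$ you would further need each $\Pr\{v_i \notin H^*\} \ge \Pr\{v_0 \notin H^*\}$, and for an arbitrary $H^*$ there is no reason this holds. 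The hook-length formula tells you the conditional distribution of labelings given the unlabeled tree, but it does not by itself produce either the association or the comparison of marginals, and you have not indicated where the factor $e$ would actually emerge from that computation. In the paper the constant $e$ has a completely different origin: it is the probability $(1-1/k)^{k-1} \ge e^{-1}$ that the minimum seed-subtree size exceeds a threshold.

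\textbf{The conditioning on $|L(S^*)| = \ell$ does not disappear.} You need a $1$-seed algorithm for the \emph{unconditioned} URRT in order to invoke the definition of $K(\cdot)$. Conditioning the URRT on the shape of its first $k$ vertices does not yield a URRT, and ``a small additional argument'' does not remove this: the $k$-seed guarantee you have is only valid for seeds with exactly $\ell$ leaves, so on the complementary event you have no control over $H^*$ at all.

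The paper avoids both issues by going in the other direction. It fixes an arbitrary seed $S$ with $|S|=k$, $|L(S)|=\ell$, and uses \lemref{conditional-indep}: conditionally on the subtree sizes $\{|(T,S)_{u\downarrow}|\}_{u\in S}$, the subtrees are \emph{independent} URRTs. On the event (of probability $\ge e^{-1}$) that every subtree has size exceeding $K$, the restriction of any size-$K$ algorithm to each subtree is a size-$\le K$ root estimator there, so each seed vertex is missed with probability at least that of the optimal single-root estimator $H^*_{K,1}$. Independence then gives the product bound directly, with no correlation inequality needed and no conditioning on $\ell$ to undo.
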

As an immediate corollary, in view of the lower bound of Bubeck,
Devroye, and Lugosi~\cite[Theorem~4]{finding-adam} on $K(\eps)$, we
have the following explicit lower bound.
\begin{cor}\corlabel{heart-lower}
  There are universal constant $c_1, c_2, c_3 > 0$ such that for all
  $\eps \le e^{-c_1 k}$,
  \[
    K^*(k, \ell, \eps) \ge c_2 \exp\left\{c_3 \sqrt{ \frac{\log (1/\eps)}{k}} \right\} .
  \]
\end{cor}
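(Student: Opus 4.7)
The plan is to simply compose the two results already in hand: \thmref{heart-lower}, which gives $K^*(k,\ell,\eps) \ge K((e\eps)^{1/k})$, and the Bubeck--Devroye--Lugosi bound \cite[Theorem~4]{finding-adam}, which states that there exist universal constants $a_1, a_2 > 0$ such that $K(\eps') \ge a_1 \exp\bigl\{a_2 \sqrt{\log(1/\eps')}\bigr\}$ whenever $\eps'$ is below some absolute threshold $\eps_0 \in (0,1)$.

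First I would set $\eps' = (e\eps)^{1/k}$ and compute
\[
\log(1/\eps') = \frac{1}{k}\bigl(\log(1/\eps) - 1\bigr).
\]
I then need two things to kick in: the threshold hypothesis $\eps' \le \eps_0$ of the BDL bound, and enough slack that $\log(1/\eps) - 1$ can be absorbed into a constant fraction of $\log(1/\eps)$. Both are arranged by choosing $c_1$ large enough: if $\eps \le e^{-c_1 k}$ with $c_1 \ge \max\{2, 1 + \log(1/\eps_0)\}$, then on the one hand $(e\eps)^{1/k} \le e^{1/k - c_1} \le \eps_0$, and on the other hand $\log(1/\eps) \ge c_1 k \ge 2$, which yields $\log(1/\eps) - 1 \ge \tfrac{1}{2}\log(1/\eps)$.

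Plugging back in, the BDL inequality gives
\[
K^*(k,\ell,\eps) \ge K(\eps') \ge a_1 \exp\Bigl\{a_2 \sqrt{\tfrac{1}{k}(\log(1/\eps) - 1)}\Bigr\} \ge a_1 \exp\Bigl\{\tfrac{a_2}{\sqrt{2}} \sqrt{\tfrac{\log(1/\eps)}{k}}\Bigr\},
\]
which is the claimed bound with $c_2 = a_1$ and $c_3 = a_2/\sqrt{2}$.

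There is no real obstacle here; the only care needed is in verifying that the hypothesis $\eps \le e^{-c_1 k}$ simultaneously pushes $\eps'$ past the applicability threshold of the BDL lower bound and makes the additive $-1$ in $\log(1/\eps)-1$ negligible, both of which are handled by a single appropriate choice of $c_1$.
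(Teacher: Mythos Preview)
Your proposal is correct and follows exactly the approach the paper indicates: the paper states that \corref{heart-lower} is ``an immediate corollary'' of \thmref{heart-lower} together with the lower bound of \cite[Theorem~4]{finding-adam}, and provides no further argument. Your write-up simply supplies the routine bookkeeping (choosing $c_1$ large enough to trigger the BDL threshold and to absorb the additive $-1$), which is precisely what ``immediate'' means here.
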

It should be noted that the above four bounds do not depend on the
value of $\ell$.

\begin{thm}\thmlabel{whole-upper}
  There are universal constants $c, \eps_0 > 0$ such that if
  $\eps \le \eps_0$, then
  \[
    K(k, \ell, \eps) \le (c k \ell/\eps)  \min\Bigl\{\log (k \ell/\eps), K^*(k, \ell, \eps/2)\Bigr\} .
  \]
\end{thm}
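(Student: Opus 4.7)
The plan is to prove the two upper bounds in the minimum separately. The bound involving $K^*(k, \ell, \eps/2)$ will be obtained by bootstrapping from an optimal $K^*$-algorithm via local expansion: compute an anchor set that contains some seed vertex with high probability, and then enlarge each anchor into a region that must contain the full seed conditional on the anchor lying in $V(S)$. The bound involving $\log(k\ell/\eps)$ will be obtained by a direct centroid-based construction that does not reference $K^*$.

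For the bootstrap bound, let $A^*$ be an algorithm attaining $m^* := K^*(k, \ell, \eps/2)$, so that $H^* := A^*(T)$ has size $m^*$ and $\Pr\{V(S) \cap H^* \ne \emptyset\} \ge 1 - \eps/2$. The algorithm outputs $H := H^* \cup \bigcup_{u \in H^*} C(u)$, where each expansion set $C(u) \subseteq V(T)$ has size $O(k\ell/\eps)$, so that $|H| = O((k\ell/\eps)\, m^*)$, matching the desired bound. We design $C(u)$ so that, conditional on $u \in V(S)$, we have $V(S) \subseteq \{u\} \cup C(u)$ with probability at least $1 - \eps/(2 m^*)$; a union bound over the anchoring failure (probability $\le \eps/2$) and the expansion failure at the successful anchor (total probability $\le m^* \cdot \eps/(2 m^*) = \eps/2$) then yields $\Pr\{V(S) \subseteq H\} \ge 1 - \eps$.

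The construction of $C(u)$ is the main obstacle. If $u \in V(S)$, root $T$ at $u$: then $V(S) \setminus \{u\}$ is partitioned among the at most $\deg_S(u) \le k - 1$ ``seed subtrees'' of $u$'s children (those child subtrees that contain other seed vertices). Each seed subtree is itself a seeded UA tree on a smaller seed, so we may recursively locate its internal seed vertices. Seed subtrees can be distinguished from non-seed subtrees by their sizes, using the Dirichlet / P\'olya-urn representation of subtree sizes at seed vertices in a UA tree: seed subtrees, whose roots are present from time $\le k$, are with high probability among the top $O(k/\eps)$ largest. The $k - \ell$ internal seed vertices can be harvested via such centrality-like criteria; the dominating cost comes from the $\ell$ leaves of $S$, since a leaf of $S$ adjacent to an internal seed vertex $u'$ is indistinguishable from a generic UA attachment at $u'$. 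Isolating these leaves requires enumerating $O(\ell/\eps)$ ``leaf candidates'' at each recovered internal seed vertex, giving a total budget of $(k - \ell) \cdot O(\ell/\eps) = O(k\ell/\eps)$ for $C(u)$. The remaining work is to carefully accumulate the failure probabilities across the recursion, using the symmetric Dirichlet tail estimates.

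For the direct bound $O((k\ell/\eps) \log(k\ell/\eps))$, we dispense with $K^*$ altogether. For each $v \in V(T)$, compute a centroid score $\phi(v) := \max_i |T_i|$ over the components $T_i$ of $T - \{v\}$, and output the $O((k/\eps) \log(k/\eps))$ vertices of smallest $\phi$, augmented by $O(\ell/\eps)$ leaf candidates at each—capturing internal seed vertices via the centroid ranking and leaves of $S$ via the local augmentation. The analysis rests on the same Dirichlet/P\'olya-urn tail bounds: for each non-seed vertex $w$, bound $\Pr\{\phi(w) \le \max_{v \in V(S)} \phi(v)\}$, and sum to estimate the expected rank of each internal seed vertex in the $\phi$-ordering.
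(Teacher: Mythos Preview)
Your two-bound decomposition matches the paper's, but both of your constructions are overcomplicated, and the direct bound has a genuine quantitative gap.

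\textbf{Bootstrap bound.} The paper's expansion set is far simpler than the recursive seed-recovery you sketch: for each anchor $u\in H^*$, it takes $C(u)=\{v:|(T,u)_{v\downarrow}|\ge n\eps/(2k\ell)\}$. The analysis is then two events only: (i) $\Pr\{V(S)\cap H^*=\emptyset\}\le\eps/2$; and (ii) $\Pr\{\min_{w\in L(S)}|(T,S)_{w\downarrow}|<n\eps/(2k\ell)\}\le\eps/2$, since $\min_{w\in L(S)}|(T,S)_{w\downarrow}|/n\to\Beta(1,k-1)/\ell$. Event~(ii) is a \emph{single} global event, not per-anchor, and it already forces $V(S)\subseteq\{u\}\cup C(u)$ for any $u\in V(S)$, because every seed vertex has some seed leaf in its subtree when rooted at $u$. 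There is no need for recursion, no need to distinguish internal seed vertices from leaves, and no need for your $\eps/(2m^*)$ per-anchor budgeting.

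\textbf{Direct bound.} As written, your construction does not achieve $O((k\ell/\eps)\log(k\ell/\eps))$. You propose $O((k/\eps)\log(k/\eps))$ centroid candidates and then augment ``at each'' with $O(\ell/\eps)$ leaf candidates; that multiplies to $O((k\ell/\eps^2)\log(k/\eps))$. Your accounting ``$(k-\ell)\cdot O(\ell/\eps)=O(k\ell/\eps)$'' presupposes that you have pinned down the $k-\ell$ internal seed vertices exactly, but the centroid ranking only hands you a candidate set, not certificates; you have not explained how to avoid augmenting at every candidate. The paper avoids this split entirely: the $\psi$-ranking already captures \emph{all} seed vertices, leaves included. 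The key observation you are missing is that $\psi^*:=\max_{u\in V(S)}\psi(u)$ is necessarily attained at a leaf of $S$ and witnessed at its unique seed neighbour, so $\psi^*/n$ is stochastically bounded by a maximum of $\ell$ copies of $\Beta(k-1,1)$; this yields $\Pr\{\psi^*\ge nt\}\le\ell(1-t^{k-1})\le\eps/2$ for $t=(1-\eps/(2\ell))^{1/(k-1)}$, and then the usual bound on $\Pr\{\exists i>K:\psi(u_i)\le nt\}$ gives $K\ge(ck\ell/\eps)\log(k\ell/\eps)$ in one step.
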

In conjunction with \thmref{heart-upper},
\[
  K(k, \ell, \eps) \le (c k \ell/\eps) \min\left\{\log(k \ell/\eps), (1/\eps)^{2/k} \log (1/\eps)\right\} .
\]

We also study the optimal size $K'(k, \ell, \eps)$, which is the size
of the smallest set of vertices to include all leaves of a seed $S$
with $|S| = k$ and $|L(S)| = \ell$, given we already know the position
of its internal nodes. In this case, the dependence of
$K'(k, \ell, \eps)$ is shown to be only logarithmic in $1/\eps$, in
contrast to the preceding results.
\begin{thm}\thmlabel{skeleton-tight}
  There are universal constants $c , \eps_0 > 0$ for which, if
  $\eps \le \eps_0$, then
  \[
    K'(k, \ell, \eps) \le \ell + c (k - \ell) \log \left( (\ell/\eps) \log \left( \frac{k - \ell}{\eps} \right) \right) .
  \]
\end{thm}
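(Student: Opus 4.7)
The plan is to analyze the following natural algorithm: given $T$ and the positions of the $k - \ell$ internal nodes of $S$, for each internal node $v$ I consider the non-internal-$S$ neighbors of $v$ in $T$ (the candidates at $v$); each such candidate $u$ roots a subtree $T_{u, v}$ in the forest obtained from $T$ by deleting all internal nodes of $S$ other than $v$. I rank the candidates by $|T_{u, v}|$ and include the top $\ell_v + f$ of them, where $\ell_v$ is the number of leaves of $S$ attached to $v$ (read off from the known seed structure, since removing the leaves of a tree leaves its internal subtree) and $f = O(\log((\ell/\eps) \log((k - \ell)/\eps)))$ is a universal parameter. Summed over the $k - \ell$ internal nodes, the output has size $\ell + (k - \ell) f$, matching the claimed bound. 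By a union bound, it suffices to show that for each $v$, with probability at least $1 - \eps/(k - \ell)$, every true leaf of $S$ attached to $v$ lies among the top $\ell_v + f$ candidates at $v$. The $\ell_v$ true leaves at $v$ are present already at time $k$, while the remaining candidates---the intruders---are direct children of $v$ that arrive at subsequent times $t > k$, each with probability $1/t$ and independently of the others.

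The first ingredient is a lower bound on the minimum true-leaf subtree size. A standard P\'olya-urn (Chinese-restaurant) analysis of uniform attachment gives that the subtree of a vertex present at time $k$ in a $\UA$-tree of size $n$ has size distributed approximately as $n \cdot \Beta(1, k - 1)$, so $\Pr\{|T_{u,v}| \le \theta\} = O(k \theta / n)$. A union bound over all $\ell$ true leaves then shows that, setting $\theta := c n \eps / (k \ell)$ for a small enough absolute constant $c$, we have $\min_j |T_{u_j, v}| \ge \theta$ with probability at least $1 - \eps / 2$.

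The second---and main---ingredient is a sharp upper bound on the number of intruders at $v$ whose subtree size exceeds $\theta$. For an intruder born at time $\tau$, the same P\'olya-urn estimate yields $\Pr\{|T_{u, v}| > \theta \mid \tau\} = O(\exp(-\tau \theta / n))$; combining with the arrival rate $1/t$ and integrating, the expected intruder count above $\theta$ at $v$ is
\[
  \mu_v = O\left( \int_k^\infty \frac{\exp(-t \eps / (c' k \ell))}{t} \dif t \right) = O(\log(\ell / \eps)) .
\]
The hard part is turning this mean control into a sharp-enough deviation bound, given that the sibling subtree sizes at $v$ are not independent. The plan is to couple the intruder subtrees to an independent Poissonized sequence via a P\'olya-urn argument and then apply the Poissonian Chernoff bound $\Pr\{\Poi(\mu) \ge m\} \le (e \mu / m)^m$; this yields that the intruder count at $v$ exceeds $f$ with probability at most $\eps / (2 (k - \ell))$ provided $f = O(\log((\ell / \eps) \log((k - \ell) / \eps)))$. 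A final union bound over the $k - \ell$ internal nodes, combined with the event from the previous paragraph, gives total failure at most $\eps$, which completes the argument.
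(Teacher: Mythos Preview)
There are two genuine gaps.

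First, you cannot ``read off $\ell_v$ from the known seed structure.'' The input in the definition of $K'(k,\ell,\eps)$ is the skeleton $R$ (with its $k-\ell$ vertices identified) together with $k$ and $\ell$; the number of seed leaves attached to each internal vertex is \emph{not} determined by this data. For instance, if $R$ is a three-vertex path and $\ell=4$, several leaf allocations are consistent. Your algorithm is therefore undefined as written. The natural fix---rank all neighbours of $R$ globally by subtree size and take the top $K$---is exactly the estimator the paper analyzes, but it also dissolves your per-$v$ bookkeeping.

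Second, and more substantively, the concentration step is not carried out, and even its conclusion is misstated. You acknowledge that sibling subtree sizes are dependent and then gesture at ``coupling to an independent Poissonized sequence via a P\'olya-urn argument,'' but this is precisely the nontrivial part and no coupling is described. Worse, even granting an independent Poisson with mean $\mu_v=O(\log(\ell/\eps))$, the tail bound $(e\mu_v/f)^f\le\eps/(2(k-\ell))$ only forces $f=O\bigl(\log(\ell/\eps)+\log((k-\ell)/\eps)\bigr)$, not the $f=O\bigl(\log((\ell/\eps)\log((k-\ell)/\eps))\bigr)$ you assert. When $k-\ell$ is large relative to $1/\eps$ (say $S$ a long path, $\ell=2$), this is a $\log$ versus $\log\log$ discrepancy, and your per-$v$ union bound does not recover the theorem's stated bound.

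The paper sidesteps both issues with a different decomposition. After the same lower bound on the minimum leaf subtree, it lets $N$ be the arrival time of the $K$-th neighbour of $R$. Any neighbour $v_i$ with $i>K$ lies inside the component $T_u$ of some $u\in R$ after removing all edges among $\{u_1,\dots,u_N\}$, and $|T_u|/n\to\Beta(1,N-1)$; hence a union bound over only the $k-\ell$ skeleton vertices (not over intruders) controls $\Pr\{\exists i>K:\psi(v_i)\ge tn\}$ once $N\ge s$. Finally, $\Pr\{N\le s\}\le\Pr_{T\sim\UA(s,S)}\{\deg(R)\ge K\}$ is a sum of independent Bernoullis, handled cleanly by Bernstein. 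It is this two-stage split---arrival time of the $K$-th neighbour, then a Beta tail over $k-\ell$ skeleton components---that produces the $\log\log((k-\ell)/\eps)$ in place of $\log((k-\ell)/\eps)$.
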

\propref{skeleton-tight} shows that \thmref{skeleton-tight} is tight
for a large class of seeds.

We also prove that assuming knowledge of the full structure of the
seed can make things much easier.
\begin{thm}\thmlabel{different-setting}
  There are universal constants $c, \eps_0 > 0$ such that if
  $\eps \le \eps_0$, then
  \[
    K(S_k, \eps) \le c (k + \log(1/\eps)) (1/\eps)^{1/k} \log(1/\eps) ,
  \]
  where $S_k$ is a star on $k$ vertices.
\end{thm}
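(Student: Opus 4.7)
The plan is to produce $H$ by a two-stage procedure. First, identify a small set $C \subseteq V(T)$ of candidate centres of $S_k$; then, for each $c \in C$, attach to $H$ a small set $L(c)$ of candidate original leaves under the hypothesis that $c$ is the true centre $c^*$ of $S_k$. The output is $H = C \cup \bigcup_{c \in C} L(c)$, and the two sizes are calibrated so that $|H| = O\bigl((k + \log(1/\eps))(1/\eps)^{1/k}\log(1/\eps)\bigr)$ while each stage fails with probability at most $\eps/2$.

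\emph{Stage 1: candidate centres.} Rank the vertices $v \in V(T)$ by the centroid score $\psi(v) = \max_{u \sim v} |T^v_u|$, where $T^v_u$ is the connected component of $T \setminus \{v\}$ containing $u$, and let $C$ consist of the $M_1 = c_1 (1/\eps)^{1/k} \log(1/\eps)$ vertices with the smallest $\psi$. Rooted at $c^*$, the tree $T$ has at least $k-1$ exchangeable initial subtrees, one per leaf of $S_k$, each starting from a single vertex at time $k$; classical P\'olya-urn results imply each has size $\Theta(n/k)$ with tails of order $\eps^{1/k}$, so $\psi(c^*) \approx n/k$ with high probability. Conversely, rooting at a neighbour $v$ of $c^*$ places a $(k{-}1)/k$ fraction of $T$'s mass on one side, forcing $\psi(v) \gtrsim (1 - 1/k)n$, and vertices further from $c^*$ are even more off-centre. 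Converting this comparison into the bound $\Pr\{c^* \in C\} \ge 1 - \eps/2$ with $|C| = M_1$ is the main analytical obstacle, requiring an adaptation of the likelihood/centroid analysis of Bubeck, Devroye, and Lugosi~\cite{finding-adam} that exploits the simultaneous exchangeability of all $k-1$ initial subtrees; it is exactly this $(k{-}1)$-fold symmetry that replaces the URRT exponent by $(1/\eps)^{1/k}$.

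\emph{Stage 2: candidate leaves, given a candidate centre.} For each $c \in C$, root $T$ at $c$ and list its children $u_1, u_2, \ldots$ in decreasing order of subtree size $|T^c_{u_i}|$. Set $L(c) = \{u_1, \ldots, u_{M_2}\}$ with $M_2 = c_2(k + \log(1/\eps))$. Conditionally on $c = c^*$, each of the $k-1$ original leaves roots a subtree that started at time $k$, hence has size $\Theta(n/k)$, whereas a later-added child of $c^*$ born at time $t > k$ has subtree size $\Theta(n/t)$, strictly smaller in expectation. A concentration argument on the associated multitype P\'olya urn then shows that the top $M_2$ children include all $k-1$ original leaves, except with probability at most $\eps/(2M_1)$; the $k$ term in $M_2$ handles the few early competitors (new children born around time $\Theta(k)$), while the $\log(1/\eps)$ term absorbs the tail of the urn's fluctuations.

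A union bound over the two stages gives $\Pr\{V(S_k) \subseteq H\} \ge 1 - \eps$, and $|H| \le M_1(1 + M_2) = O\bigl((k + \log(1/\eps))(1/\eps)^{1/k}\log(1/\eps)\bigr)$, matching the theorem. The hard step is Stage 1: isolating the $(1/\eps)^{1/k}$ exponent demands a sharper likelihood argument than in the URRT case, one that uses all $k-1$ exchangeable initial subtrees at once rather than a single-vertex seed; Stage 2 reduces to a comparatively standard urn-concentration calculation.
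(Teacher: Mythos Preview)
Your two-stage outline matches the paper's proof almost exactly: the paper also first collects the $M_1 = c(1/\eps)^{1/k}\log(1/\eps)$ vertices of smallest $\psi$ as candidate centres (\lemref{include-center-star}), then around each candidate takes the $M_2 = O(k + \log(1/\eps))$ neighbours with largest hanging subtrees (\propref{given-skeleton-lem} specialised to a star, whose skeleton is a single vertex), and combines via $K(S_k,\eps)\le M_1 M_2$ (\propref{find-all-star}).

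Two small corrections to your write-up. First, Stage~1 does not need any likelihood machinery: since the centre $c^*$ splits $T$ into $k$ pieces each of asymptotic relative size $\Beta(1,k-1)$, one has $\Pr\{\psi(c^*)\ge nt\}\le k(1-t)^{k-1}$, and choosing $t = 1-(\eps/2k)^{1/(k-1)}$ already yields the $(1/\eps)^{1/k}$ exponent by the same elementary argument as in \propref{heart-upper}. Second, in Stage~2 you only need the leaf set $L(c^*)$ for the \emph{true} centre to succeed with probability $1-\eps/2$; there is no need to drive the per-candidate failure down to $\eps/(2M_1)$, since the final union bound is simply $\Pr\{c^*\notin C\}+\Pr\{L(S_k)\not\subseteq L(c^*)\}$. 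This does not change the order of $M_2$, but it is conceptually cleaner.
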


Some of the above quantities can be related using the following simple
inequalities which we state without proof:
\begin{align}
  K^*(k, \ell, \eps) &\le K(k, \ell, \eps) ; \eqlabel{whole-harder-than-heart} \\
  (k - \ell) + K'(k, \ell, \eps) &\le K(k, \ell, \eps) ; \eqlabel{whole-harder-than-leaves} \\
  K(S, \eps) &\le K(|S|, |L(S)|, \eps) . \eqlabel{whole-harder-than-given}
\end{align}

\subsection{Related work}

The oldest work on this topic seems to be by Haigh~\cite{haigh}, who
in 1969 studied properties of the maximum likelihood estimate of the
root in a uniform attachment tree, including the precise
identification of the limiting probability $1 - \log 2$ of success
when only one candidate node can be selected. Shah and
Zaman~\cite{shah-zaman} studied properties of the maximum likelihood
estimate of the root in a diffusion process over regular
trees~\cite{shah-zaman}. A \emph{diffusion process} over an infinite
graph $G$ is a sequence $(G_1, G_2, \dots)$ described by designating a
root vertex $u_1$, where $G_1 = \{u_1\}$, and $G_{i + 1}$ is obtained
from $G_i$ by adding to $G_i$ a uniformly random edge in its
boundary. Shah and Zaman defined a measure of node centrality called
\emph{rumor centrality} which they showed coincided with the maximum
likelihood estimate for the root of a diffusion process in a regular
tree. In a follow-up work~\cite{shah-zaman-2}, Shah and Zaman study
the effectiveness of rumor centrality as an estimator of the root for
a larger family of random trees. Bubeck, Devroye, and
Lugosi~\cite{finding-adam} independently studied root-finding in
uniform and preferential attachment trees. They showed that rumor
centrality could serve as an effective estimator for the root in a
uniform attachment tree, and gave explicit bounds on the size of
vertex-confidence sets for the root. It is also shown in
\cite{finding-adam} that root-finding is possible in preferential
attachment trees, for instance by picking nodes of high degree. Jog
and Loh~\cite{ling-centrality} showed that root-finding algorithms
also exist for sublinear preferential attachment trees. Khim and
Loh~\cite{khim-confidence} gave bounds on the size of
vertex-confidence sets for the root in a diffusion process over
regular trees. They also showed that root-finding is possible over a
certain asymmetric infinite tree. In general, diffusion processes are
part of the study of first-passage percolation, which is an old and
widely-studied subject. For a recent survey of classical and newer
works in this field, see~\cite{fpp}.

Every root-finding algorithm discussed in the above works depends upon
measures of node centrality. In the uniform and preferential
attachment models, as well as in a diffusion process over $d$-ary
trees, Jog and Loh also showed that with high probability, a single
node persists as the most central node throughout the process, after a
finite number of steps~\cite{jog-persistence}.

Seeded attachment trees have received some more attention lately,
where Bubeck, Mossel, and R\'{a}cz~\cite{bubeck-influence1} first
showed that the total variation distance between between the
distributions of arbitrarily large seeded preferential attachment
trees is lower bounded by a positive constant, as long as the two
seeds have distinct degree profiles. This result was later extended to
hold for all non-isomorphic pairs of seeds by Curien, Duquesne,
Kortchemski, and Manolescu~\cite{curien-influence}, and further
modified to work for uniform attachment trees by Bubeck, Eldan,
Mossel, and R\'{a}cz~\cite{seed-influence}. The influence of the seed
in either sublinear or superlinear preferential attachment trees
remains unknown.

Recently, Lugosi and Pereira~\cite{gabor} specifically studied the
problem of partially recovering the seed in seeded uniform attachment
trees in which the seed is either a path, a star, or a uniform
attachment tree itself. In particular, they show that there are
universal constants $c_1, c_2 > 0$ such that if
$k \ge c_1 \log (1/\eps)$, then
$K(S_k, \eps) \le c_2 k$~\cite[Theorem~3]{gabor}. In comparison to our
\thmref{different-setting}, we note that our result works for all
values of $k$, while their result gives a better joint dependence on
$k$ and $\eps$ in the applicable range.

\subsection{Content of the paper}

To start in \secref{heart}, we focus on algorithms designed to report
sets of vertices which intersect the seed with probability at least
$1 - \eps$. In \secref{heart-upper}, we use basic results about
P\'{o}lya urns to give a simple algorithm which reports a set of nodes
which are known to intersect the seed with probability at least
$1 - \eps$. This gives an upper bound on $K^*(k, \ell, \eps)$. In
\secref{subpolynomial}, we study a different algorithm which gives an
improvement on the dependence of $K^*(k, \ell, \eps)$ on $1/\eps$, for
a certain range of $k$ and $\eps$. We also leverage a lower bound from
\cite{finding-adam} to give a lower bound on $K^*(k, \ell, \eps)$ in
\secref{heart-lower}, which ultimately relies upon the maximum
likelihood estimate for root-finding vertex confidence sets.

In \secref{whole}, our focus shifts to the analysis of algorithms
which return sets which include the entire seed with probability at
least $1 - \eps$. Using a slightly different analysis of the same
algorithm as in \secref{heart-upper}, we give the first upper bound on
$K(k, \ell, \eps)$ from \thmref{whole-upper} in
\secref{whole-upper}. We give yet another simple algorithm to solve
this problem, which relies on the upper bound on $K^*(k, \ell, \eps)$
of \secref{heart-upper}, thereby proving the second part of
\thmref{whole-upper}.

In \secref{leaves}, we investigate algorithms for locating all nodes
of the seed when we assume knowledge of the positions of all internal
nodes. We further show in \secref{whole-star} that if we know that the
seed is a star on $k$ nodes, then only a constant factor of $k$ nodes
suffice to identify all nodes of the seed with constant probability.

\subsection{Notation and background}

For a set $A$ and $i \in \N$, let $A^{(i)}$ denote the set of all
$i$-sized subsets of $A$, \ie
$A^{(i)} = \{ B \colon |B| = i, B \subseteq A\}$.  Let also
$\log \colon (0, \infty) \to \R$ denote the natural logarithm
$\log_e$.

Throughout this document, we write $S$ for a tree with
$V(S) = \{u_1, \dots, u_k\}$ and with $\ell$ leaves. The leaf set of
$S$ is denoted by $L(S)$. In general, we write $|S|$ instead of
$|V(S)|$ for the number of vertices in a tree, so $|S| = k$. The tree
$S$ is called the \emph{seed}, and we assume throughout that
$k \ge 2$.

For a tree $T$, we say that $T'$ is a \emph{subtree} of $T$ if the
vertices of $T'$ induce a connected subgraph of $T$. For $T$ with
subtree $T'$ and $u \in V(T)$, let $(T, T')_{u \downarrow}$ and
$(T, V(T'))_{u \downarrow}$ denote the subtree of $T$ rooted at $u$
``facing away'' from $T'$, \ie $(T, T')_{u \downarrow}$ is the subtree
of $T$ induced on all nodes whose (unique) path to $T'$ includes the
vertex $u$.

For a tree $T$ and set $X \subseteq V(T)$, we write $N_T(X)$ for the set
of neighbours of all vertices in $X$, \ie
\[
  N_T(X) = \{u \in V(T) \setminus X \colon u \text{ is adjacent to $v$ for some $v \in X$} \} .
\]
With a slight abuse of notation, we will write $\deg_T(X) = |N_T(X)|$,
and for $u \in V(T)$, $\deg_T(u) = \deg_T(\{u\})$. We will omit the
subscript in $\deg_T$ and $N_T$ whenever the tree $T$ is understood.

\begin{figure}
  \centering
  \begin{subfigure}[b]{0.45\textwidth}
    \centering
    \includegraphics[width=\textwidth]{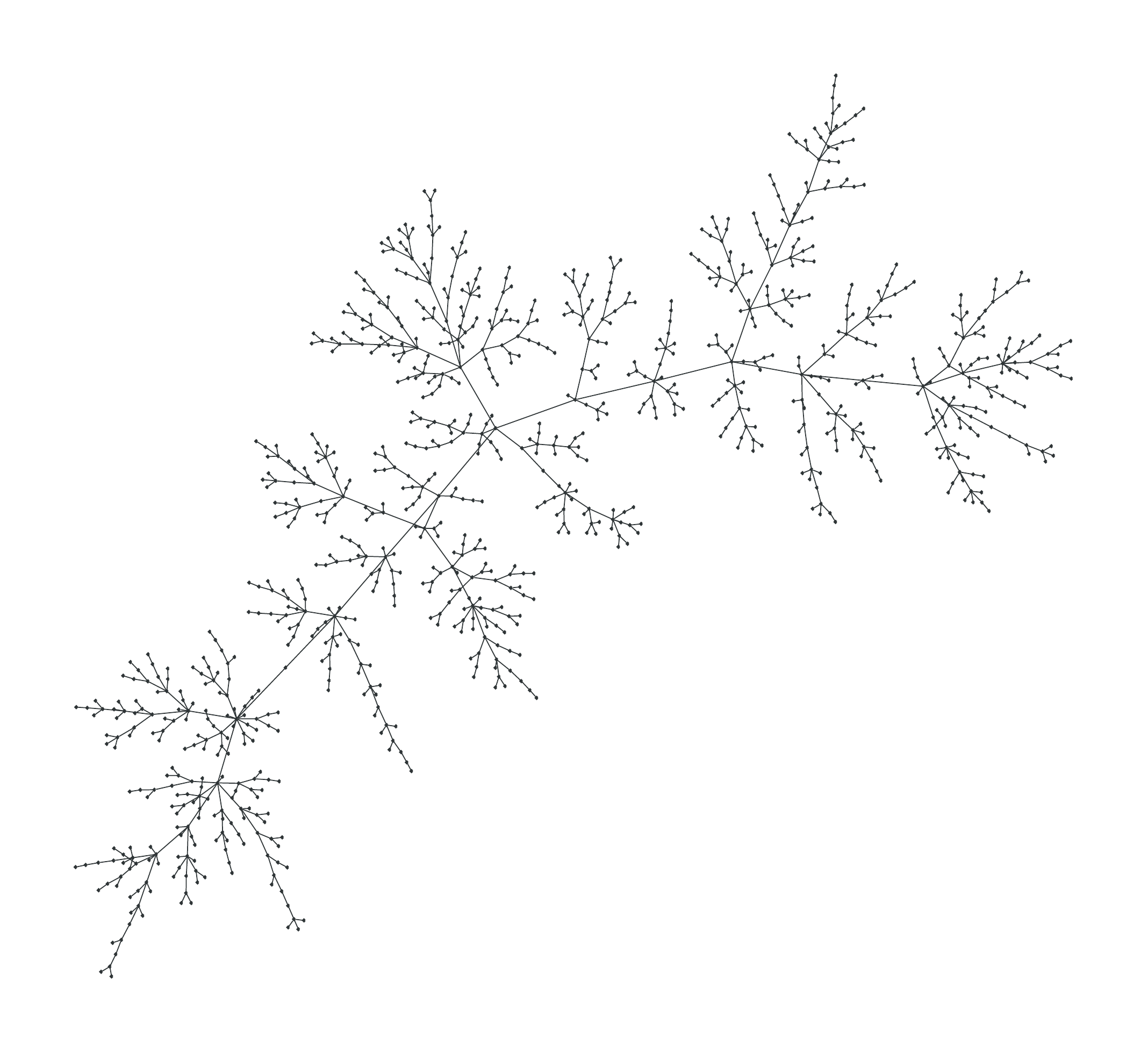}
    \caption{$S$ is a path on $10$ nodes.}
  \end{subfigure}
  \begin{subfigure}[b]{0.45\textwidth}
    \centering
    \includegraphics[width=\textwidth]{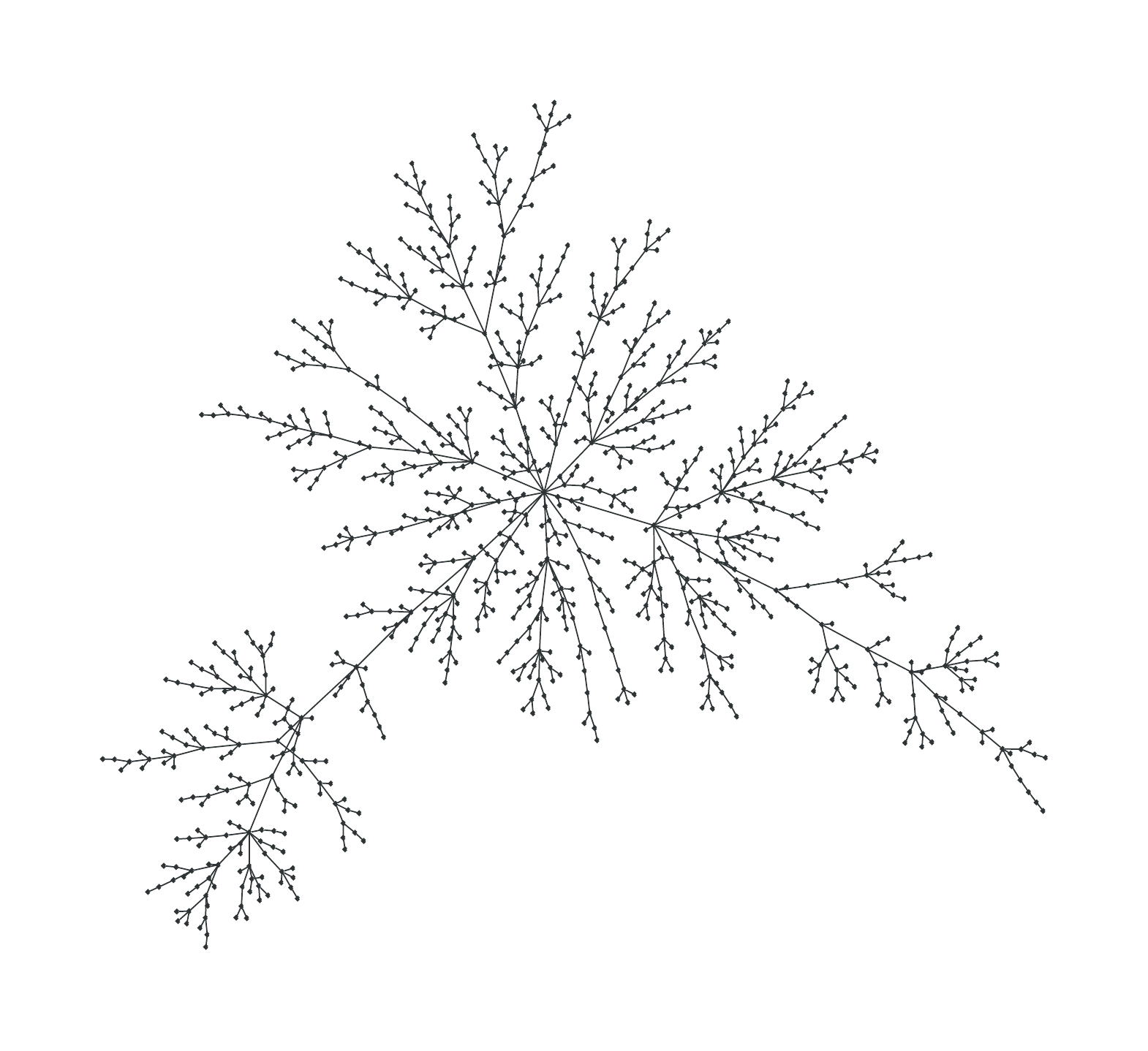}
    \caption{$S$ is a star on $10$ nodes.}
  \end{subfigure}
  \caption{One sample from $\UA(1000, S)$ for two different seeds.}
  \figlabel{ua-samples}
\end{figure}

We inductively define a distribution on labelled trees: Let
$\alpha \ge 0$, let $\UA_\alpha(k, S) = S$, and suppose that we are
given $T_{n - 1} \sim \UA_\alpha(n - 1, S)$, where
$V(T_{n - 1}) = \{u_1, \dots, u_{n - 1}\}$. Let
$T_n \sim \UA_\alpha(n, S)$ be obtained from $T_{n - 1}$ by adding a
leaf labelled $u_n$ to $T_{n - 1}$ and connecting it to a vertex
$u \in \{u_1, \dots, u_{n - 1}\}$ with probability proportional to
$\deg_{T_{n - 1}}(u)^\alpha$. The distribution $\UA_0(n, P_2)$, in
which new leaves are attached to uniformly random vertices
sequentially, is called the \emph{uniform attachment tree} or
\emph{random recursive tree}, and is sometimes denoted by
$\text{UA}(n)$ or $\text{URRT}(n)$ in the literature. The distribution
$\UA_1(n, P_2)$ is called the \emph{(linear) preferential attachment
  tree} or \emph{Barab\'{a}si-Albert model}~\cite{barabasi}, sometimes
denoted by $\text{PA}(n)$ in the literature. For $\alpha \in (0, 1)$,
$\UA_\alpha(n, P_2)$ is called the \emph{sublinear preferential
  attachment tree}, and when $\alpha > 1$, it is called the
\emph{superlinear preferential attachment tree}. For general seeds
$S$, the distributions above are said to be \emph{with seed $S$}. In
this paper, we are mostly concerned with $\UA_0(n, S)$, so we
generally omit the subscript ``0.''  Moreover, if $T \sim \UA(n, S)$
and the distribution of $T$ is understood, we avoid repeating its
distribution. See \figref{ua-samples} for a typical sample from
$\UA(n, S)$ for different choices of $S$.

The vertices of the aforementioned trees are labelled; for a (rooted
or unrooted) labelled tree $T$, let $T^\circ$ denote the isomorphism
class of $T$, \ie the operation $\circ$ ``forgets'' the labelling of
$T$. With some abuse of notation, we refer to nodes of $T^\circ$ using
their original labels in $T$. Of course, if $T \sim \UA(n, S)$ and
nothing at all is assumed about $S$, it is always possible that
$S = T$. Therefore, except if otherwise specified, we assume knowledge
of the size of the seed in trying to locate it in $T^\circ$---our main
goal is to detect $S$ in the unlabelled tree $\UA(n, S)^\circ$, given
that $|S| = k$.

In general, sets of vertices which are made to include parts of the
seed are referred to as \emph{(root-finding) vertex confidence
  sets}. We reserve the letter $H$ for functions which map unlabelled
trees to vertex confidence sets, and such functions are called
\emph{root-finding algorithms}. We also reserve the letter $K$ for the
size of vertex confidence sets.

We assume that the reader is familiar with the basics of probability
theory, including basic properties of standard random variables.  We
write $\Beta(\alpha, \beta)$ for a Beta distribution with parameters
$\alpha, \beta$, \ie the distribution supported on $[0, 1]$ with
density
\[
  f_{\alpha, \beta}(x) = \frac{1}{\mathrm{B}(\alpha, \beta)} x^{\alpha - 1} (1 - x)^{\beta - 1} ,
\]
where for $(\alpha_1, \dots, \alpha_k) \in \R^k_+$, we have
$\mathrm{B}(\alpha_1, \dots, \alpha_k) = \frac{\prod_{i = 1}^k
  \Gamma(\alpha_i)}{\Gamma(\sum_{i = 1}^k \alpha_i)}$. We write
$\Dirichlet(\alpha_1, \dots, \alpha_k)$ for the Dirichlet
distribution, which is supported on the $k$-simplex
\[
  \left\{(x_1, \dots, x_k) \colon \sum_{i = 1}^k x_i = 1 \text{ and } x_i \in [0, 1] \right\} 
\]
and which has density
\[
  f_{\alpha_1, \dots, \alpha_k}(x_1, \dots, x_k) = \frac{1}{\mathrm{B}(\alpha_1, \dots, \alpha_k)} \prod_{i = 1}^k x_i^{\alpha_i - 1} .
\]
We often make use of the following fact about the distribution of sums
of Dirichlet marginals: If
$(X_1, \dots, X_k) \sim \Dirichlet(\alpha_1, \dots, \alpha_k)$, and
$I \subseteq \{1, \dots, k\}$ is some index set, then
\[
  \sum_{i \in I} X_i \sim \Beta\left(\sum_{i \in I} \alpha_i, \sum_{i \in \{1, \dots, k\} \setminus I} \alpha_i\right) .
\]

\subsection{Acknowledgements}
The authors would like to thank the anonymous referees for their
helpful comments and suggestions. Luc Devroye is supported by NSERC
Grant A3456. Tommy Reddad is supported by an NSERC PGS D scholarship
396164433.

\section{Finding a single vertex in the seed}\seclabel{heart}
  \subsection{An upper bound}\seclabel{heart-upper}
  For a tree $T$ and $u \in V(T)$, let 
\[
  \psi_T(u) = \max_{v \in V(T) - u} |(T, u)_{v \downarrow}| ,
\]
so $\psi_T(u)$ is the size of the largest subtree of $T$ hanging off
of the vertex $u$. We omit the subscript $T$ when the tree is
understood.

Consider the strategy for picking central nodes from an unlabelled
copy of $T \sim \UA(n, S)$ which works by simply picking the $K$ nodes
minimizing the values of $\psi$. We will write
$H^*_{\psi; K}(T^\circ)$ for such a set of nodes. This strategy was
first successfully introduced in \cite{finding-adam}, where $\psi$ is
seen as a measure of node centrality, and in which it is shown that
the root of $\UA(n)$ (which is the node labelled $u_1$) is likely to
have low $\psi$ value. In fact, it is shown that this is also true for
the root in preferential attachment trees. This centrality measure is
further studied for root-finding in different settings
\cite{ling-centrality,jog-persistence,khim-confidence}.

Specifically, the result of \cite[Theorem 3]{finding-adam} has that
for $K \ge (2.5/\eps) \log(1/\eps)$,
\[
  \Pr_{T \sim \UA(n)}\Big\{u_1 \in H^*_{\psi; K}(T^\circ)\Big\} \ge 1 - \frac{4 \eps}{1 - \eps} ,
\]
and so, for $\eps \le 1/2$, $K(\eps) \le (80/\eps) \log(1/\eps)$. The
following result indicates that $H^*_{\psi; K}$ also serves as a
root-finding algorithm for seeded uniform attachment seeds, and offers
an upper bound on the size of such vertex confidence sets.

Our proof relies on a few supporting lemmas and a classical result in
the study of P\'{o}lya urns, whose proofs and statements are given in
\appref{app}.
\begin{prop}\proplabel{heart-upper}
  There are universal constants $c, \eps_0 > 0$ for which, if
  $\eps \le \eps_0$ and
  \[
    K \ge c (1/\eps)^{2/k} \log(1/\eps) ,
  \]
  then
  \[
    \Pr\{|V(S) \cap H^*_{\psi; K}(T^\circ)| \ge 1 \} \ge 1 - \eps .
  \]
\end{prop}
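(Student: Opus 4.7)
The plan is to adapt the template used by Bubeck, Devroye, and Lugosi in the $k = 1$ case~\cite{finding-adam} to a distinguished vertex $u^*$ of the seed $S$. I would take $u^*$ to be a centroid of $S$, so that every subtree $(S, u^*)_{v_j \downarrow}$ at $u^*$ in $S$ has size $s_j \le k/2$ and $s_1 + \cdots + s_d = k - 1$, where $d = \deg_S(u^*)$. Fixing a threshold $\alpha = 1 - \delta$ with $\delta$ to be chosen, I would separately prove that each of the events $\{\psi_T(u^*) \le \alpha n\}$ and $\{\#\{v \in V(T) \colon \psi_T(v) \le \alpha n\} \le K\}$ holds with probability at least $1 - \eps/2$. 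On their intersection, $u^*$ is among the $K$ vertices minimizing $\psi$, so $u^* \in H^*_{\psi; K}(T^\circ) \cap V(S)$, which proves the claim.

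For the first event, the sizes $X_j(n)$ of the $d$ seeded subtrees at $u^*$ evolve under uniform attachment as a P\'{o}lya urn: conditionally on the sizes at step $i$, the next vertex joins subtree $j$ with probability $X_j(i)/i$. The classical Dirichlet description of P\'{o}lya urns, deferred to the Appendix, implies that the normalized vector $(X_1(n), \ldots, X_d(n))/(n-1)$ is stochastically close to a $\Dirichlet(s_1, \ldots, s_d)$ vector, up to an error coming from the ``immigrant'' subtrees at $u^*$ born when $u^*$ acquires new neighbors after time $k$---whose total mass is $O(\log n)$ with high probability. Since each Dirichlet marginal $Y_j$ is $\Beta(s_j, k - 1 - s_j)$-distributed, and since the centroid condition $s_j \le k/2$ forces $k - 1 - s_j \ge (k-2)/2$, a Beta tail bound yields $\Pr\{Y_j > 1 - \delta\} \lesssim \delta^{(k-2)/2}$. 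Summing over the $d \le k$ seeded subtrees gives $\Pr\{\psi_T(u^*) > (1-\delta) n\} \lesssim k \, \delta^{(k-2)/2}$. Setting this equal to $\eps/2$ determines $\delta \asymp \eps^{2/k}$, after absorbing the small discrepancy between the exponents $(k-2)/2$ and $k/2$ into the universal constant $c$.

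For the second event, I would appeal to a counting estimate in the spirit of~\cite{finding-adam}: in $\UA(n)$, and hence also in $\UA(n, S)$, the number of vertices $v$ satisfying $\psi_T(v) \le (1-\delta) n$ is $O((1/\delta) \log(1/\delta))$ with high probability. The seed $S$ does not materially affect this bound since it perturbs only $k$ vertices out of $n$. Plugging in $\delta \asymp \eps^{2/k}$ yields $K = O((1/\eps)^{2/k} \log(1/\eps))$, matching the required bound.

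The principal obstacle is the P\'{o}lya urn step. The subtree sizes at $u^*$ do not form an exact P\'{o}lya urn because of the immigration of new subtrees, and we need a non-asymptotic tail bound (rather than the limiting Dirichlet statement) since $\delta$ shrinks with $\eps$. Handling these perturbations rigorously is what necessitates the supporting lemmas and the classical P\'{o}lya urn result postponed to the Appendix.
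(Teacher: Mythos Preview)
Your overall architecture matches the paper's: pick a centroid $u^*$ of $S$, split into the two events $\{\psi_T(u^*)\le(1-\delta)n\}$ and $\{\forall i>K,\ \psi_T(u_i)>(1-\delta)n\}$, control the first via a P\'olya urn / Dirichlet limit for the subtree sizes hanging off the centroid, and reuse the argument of \cite{finding-adam} for the second. The choice $\delta\asymp\eps^{2/k}$ and the resulting $K\asymp(1/\eps)^{2/k}\log(1/\eps)$ are exactly what the paper obtains.

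There is, however, a genuine gap in your urn step. The ``immigrant'' subtrees at $u^*$ do \emph{not} have total mass $O(\log n)$: their union is $(T,S)_{u^*\downarrow}\setminus\{u^*\}$, and $|(T,S)_{u^*\downarrow}|/n\to\Beta(1,k-1)$, so the total mass is $\Theta(n)$ almost surely. You may be confusing the \emph{number} of immigrant subtrees (which is $\deg_T(u^*)-d=O(\log n)$ w.h.p.) with their combined size. The clean fix, and what the paper does, is to avoid immigration entirely by using the exact $k$-colour urn $\bigl(|(T,S)_{u\downarrow}|:u\in V(S)\bigr)$, which by the appendix lemma converges to $\Dirichlet(1,\dots,1)$. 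Grouping colours by the components $C$ of $S-\partial_S(u^*)$ (one of which is $\{u^*\}$ itself, absorbing all ``immigrants'') gives $\psi_T(u^*)\le\max_C\sum_{u\in C}|(T,S)_{u\downarrow}|$ with each sum $\sim\Beta(|C|,k-|C|)\preceq\Beta(k/2,k/2)$, whence $\Pr\{\psi_T(u^*)\ge nt\}\le 2\cdot 3^k(1-t)^{k/2}$. This also shows your worry about non-asymptotic bounds is unnecessary: the paper simply passes to the limiting Dirichlet and bounds the limit, which is legitimate here because the target inequality is uniform in $n$.
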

\thmref{heart-upper} follows immediately.
\begin{proof}[Proof of \propref{heart-upper}]
  \begin{figure}
    \centering
    \includegraphics[width=0.6\textwidth]{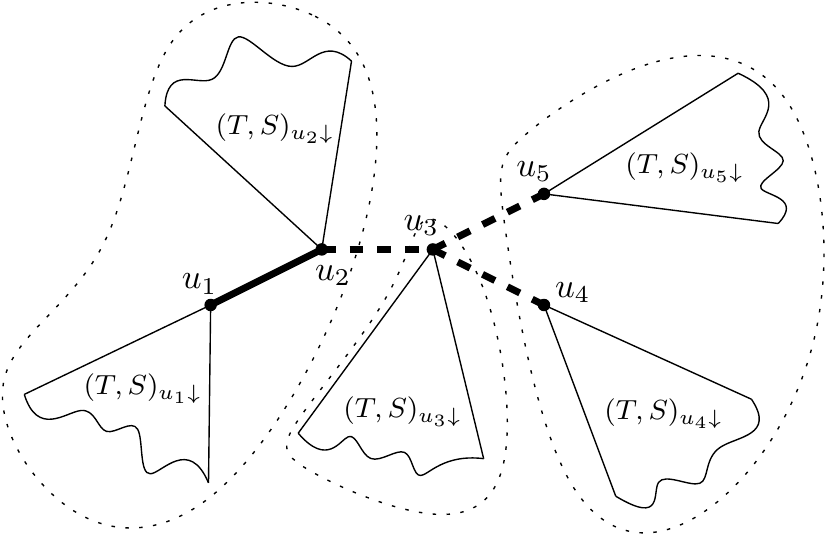}
    \caption{The seed $S$ has vertex set $\{u_1, \dots, u_5\}$. The
      node $r = u_3$ is its unique centroid. The dashed edges form the
      set $\partial_S(r)$. The dotted lines outline the components of
      $S - \partial_S(r)$.}
    \figlabel{centroid}
  \end{figure}
  Let $\psi_* = \min\{\psi(u_1), \dots, \psi(u_k)\}$. If for all
  $i > K$, $\psi(u_i) > \psi_*$, then $V(S)$ intersects
  $H^*_{\psi; K}(T^\circ)$, so
  \begin{align}
    \Pr\{V(S) \cap H^*_{\psi; K}(T^\circ) = \emptyset\} &\le \Pr\{\exists i > K \colon \psi(u_i) \le \psi_*\} \notag \\
                                                        &\le \Pr\{\psi_* \ge nt\} + \Pr\{\exists i > K \colon \psi(u_i) \le nt\} , \eqlabel{phi-tail}
  \end{align}
  where $t > 0$ is to be specified later. It is a classical result
  that the seed $S$ has a centroid, \ie a node $r$ whose removal
  splits the seed into components each of size at most
  $k/2$~\cite{jordan-centroid}. Note that
  \[
    \psi_* \le \psi(r) \le \max_{C \in \sC(S - \partial_S(r))} \sum_{u \in
      C} |(T, S)_{u \downarrow}| ,
  \]
  where $\sC(G)$ denotes the set of components of a graph $G$, and
  $\partial_S(r)$ denotes all of the edges of $S$ incident to
  $r$. Now, for any $C \in \sC(S - \partial_S(r))$, we have by
  \lemref{dir-convergence} that
  \[
    \frac{1}{n} \sum_{u \in C} |(T, S)_{u \downarrow}| \overset{d}{\to} \Beta(|C|, k - |C|) ,
  \]
  as $n \to \infty$, so stochastically,
  \[
    \frac{\psi_*}{n} \le \max_{C \in \sC(S - \partial_S(r))} \Beta(|C|, k - |C|)
  \]
  and since $|C| \le k/2$ uniformly because $r$ is a centroid, then
  each such beta random variable is stochastically smaller than a
  $\Beta(k/2, k/2)$ by \lemref{beta-dom-1}. See \figref{centroid}. Let
  $f_{k/2, k/2}(x)$ be the density of a $\Beta(k/2, k/2)$ random
  variable. Using the bound
  \[
    \sqrt{\frac{2 \pi}{x}} \left(\frac{x}{e}\right)^x \le \Gamma(x) \le \sqrt{\frac{2 \pi}{x}} \left(\frac{x}{e} \right)^x e^{\frac{1}{12 x}} ,
  \]
  which holds for all $x > 0$~\cite[Equation 5.6.1]{nist}, we see that
  \[
    \mathrm{B}(k/2, k/2) = \frac{\Gamma(k/2)^2}{\Gamma(k)} \ge 2^{-k} \cdot \frac{2}{e^{1/12}} \sqrt{\frac{2 \pi}{k}} > 3^{-k} 
  \]
  for all $k \ge 1$. Then,
  \begin{align*}
    \Pr\{\psi_* \ge nt\} &\le k \Pr\{\Beta(k/2, k/2) \ge t\} \\
                                    &= k \int_{t}^1 f_{k/2, k/2}(x) \dif x \\
                              &\le k 3^k \int_0^{1 - t} (x (1 - x))^{k/2 - 1} \dif x \\
                              &\le k 3^k \int_0^{1 - t} x^{k/2 - 1} \dif x \\
                          &= 2 (1 - t)^{k/2} 3^k,
  \end{align*}
  so we can pick $t = 1 - (1/9) (\eps/4)^{2/k}$.

  To summarize, we have shown that we can bound the first term in
  \eqref{phi-tail} by
  \[
    \Pr\{\psi_*/n \ge 1 - (1/9) (\eps/4) ^{2/k}\} \le \eps/2 .
  \]
  For the second term, the argument is identical to that of
  \cite[Theorem 3]{finding-adam}. Indeed, if $T_i$ denotes the
  subgraph of $T$ containing the vertex $u_i$ after the removal of all
  edges between $\{u_1, \dots, u_K\}$, then for any $i > K$,
  \[
    \psi(u_i) \ge \min_{1 \le j \le K} \sum_{m = 1, m \neq j}^K |T_m| ,
  \]
  and by \lemref{dir-convergence},
  \[
    \frac{1}{n} \sum_{m = 1, m \neq j}^K |T_m| \overset{d}{\to} \Beta(K - 1, 1) 
  \]
  as $n \to \infty$, so that
  \begin{align*}
    \Pr\{\exists i > K \colon \psi(u_i) \le nt\} &\le K \Pr\{ \Beta(K - 1, 1) \le t\} \\
                                                                     &= K t^{K - 1} \\
                                                                     &\le K e^{- (1/9) \eps^{2/k} (K - 1)} ,
  \end{align*}
  and a little bit of arithmetic shows that we should pick
  \[
    K \ge c (1/\eps)^{2/k} \log (1/\eps)
  \]
  for universal constants $c, \eps_0 > 0$, as long as
  $\eps \le \eps_0$.
\end{proof}

Consequently, we see that
\[
  K^*(k, \ell, \eps) \le c (1/\eps)^{2/k} \log (1/\eps) .
\]

We note that $H^*_{\psi; K}(T^\circ)$ can be computed in $O(n)$ time.

  \subsection{The maximum likelihood estimate}\seclabel{mle}
  Given an unlabelled tree $T$, and a candidate seed $S$, define the
likelihood function $\sL_{T}(S)$ to be the probability of observing
$T$ under $\UA(n, S)^\circ$, \ie
\[
  \sL_{T}(S) = \Pr_{T' \sim \UA(n, S)}\{T'^\circ = T\} ,
\]
and if $\sS_{k, \ell}(T)$ denotes the set of all possible seeds in $T$
with $k$ vertices and $\ell$ leaves, the \emph{maximum likelihood
  estimate} for $S$ is given by
\[
  S^* = \argmax_{S \in \sS_{k, \ell}(T)} \sL_{T}(S) .
\]
Note that the subtrees $(T, S)_{u \downarrow}$ for $u \in V(S)$ are,
conditionally on their sizes, independent random recursive trees:
\begin{lem}\lemlabel{conditional-indep}
  Let $S$ be some seed, and $T \sim \UA(n, S)$, and $n_u \in \N$ for
  $u \in S$ be such that $\sum_{u \in S} n_u = n$. Then, conditionally
  on $|(T, S)_{u \downarrow}| = n_u$ for all $u \in S$, the trees
  $(T, S)_{u \downarrow}$ are independently distributed as $\UA(n_u)$.
\end{lem}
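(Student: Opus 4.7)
The plan is to decompose the dynamics of $T \sim \UA(n, S)$ into an ``allocation'' step, describing which subtree off of $S$ each new vertex joins, and an ``internal'' step, describing the precise attachment within that subtree. For each $j \in \{k+1, \dots, n\}$, let $X_j \in V(S)$ denote the unique vertex of $S$ with $u_j \in V((T, S)_{X_j \downarrow})$; equivalently, $X_j = X_{p(j)}$, where $p(j)$ is the index of the parent of $u_j$ in $T$, with the convention $X_j = u_j$ for $j \le k$.

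Next I would verify the key claim that, conditional on $(X_{k+1}, \dots, X_n)$, the subtrees $(T, S)_{u \downarrow}$ for $u \in V(S)$ are mutually independent, and each is distributed as $\UA(n_u)$, where $n_u = 1 + |\{j > k : X_j = u\}|$. The key observation is that, when $u_j$ arrives with $X_j = u$, the original uniform attachment rule over all $j - 1$ existing vertices of $T_{j-1}$, conditioned on the event that its parent lies in $(T_{j-1}, S)_{u \downarrow}$, restricts to a uniform choice among the vertices of $(T_{j-1}, S)_{u \downarrow}$. Independence across different $u$'s follows because, given the allocation, the attachment rules in disjoint subtrees use disjoint fresh uniform draws. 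Since $(T_j, S)_{u \downarrow}$ grows from size $1$ (just $u$) to size $n_u$ one vertex at a time by uniform attachment, the resulting subtree is exactly $\UA(n_u)$ (up to relabelling by arrival order).

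To conclude, I observe that the conditional law computed above depends on $(X_{k+1}, \dots, X_n)$ only through the counts $(n_u)_{u \in V(S)}$. Hence, by the tower property applied to the coarser $\sigma$-algebra generated by the sizes, the conditional distribution of the subtrees given only $(|(T, S)_{u \downarrow}| = n_u)_{u \in V(S)}$ is still that of independent $\UA(n_u)$ trees. The main subtlety is the elementary fact that uniform attachment, restricted to the event of landing in a specified subtree, is itself uniform attachment within that subtree; all other steps are essentially bookkeeping and an invocation of the tower property.
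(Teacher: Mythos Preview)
Your proposal is correct and follows essentially the same approach as the paper's proof sketch: both rest on the observation that a new vertex, conditioned on joining the subtree $(T,S)_{u\downarrow}$, attaches uniformly within that subtree. Your version is simply a more careful elaboration, explicitly introducing the allocation variables $(X_j)$ and invoking the tower property to pass from conditioning on the full allocation sequence to conditioning on the sizes alone.
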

\begin{proof}[Proof sketch]
  Any incoming node in the attachment process $\UA(n, S)$,
  conditionally upon connecting to $(T, S)_{u \downarrow}$ for some
  $u \in S$, will connect to a uniformly random node of
  $(T, S)_{u \downarrow}$. Conditioning on the event that
  $|(T, S)_{u \downarrow}| = n_u$, this precisely describes the
  uniform attachment tree $\UA(n_u)$.
\end{proof}

As a consequence,
\begin{equation}
  \sL_T(S) = \prod_{u \in V(S)} \sL_{(T, S)_{u \downarrow}}(u) , \eqlabel{mle}
\end{equation}
where $\sL_{(T, S)_{u \downarrow}}(u)$ is the likelihood of the tree
$(T, S)_{u \downarrow}$ to be rooted at $u$, which is computed in
\cite[Section 3]{finding-adam}. Specifically, as in
\cite{finding-adam}, let $T$ be a rooted tree and $v$ a vertex of $T$,
and $T_1, \dots, T_k$ be the subtrees rooted at the children of $v$
listed in an arbitrary order, and $S_1, \dots S_L$ be the isomorphism
classes realized by these subtrees, define
\[
  \Aut(v, T) = \prod_{i = 1}^L \bigl|\bigl\{j \in \{1, \dots, k\} \colon T_j^\circ = S_i\bigr\}\bigr| ! .
\]
Define also, for an unrooted unlabelled tree $T$,
\[
  \overline{\Aut}(u, T) = |\{v \in V(T) \colon (T, v)^\circ = (T, u)^\circ\} | ,
\]
\ie $\overline{\Aut}(u, T)$ is the number of vertices $v \in V(T)$
such that the rooted trees $(T, v)$ and $(T, u)$ are isomorphic. Then,
\[
  \sL_T(u) = \frac{|T|}{\overline{\Aut}(u, T)} \prod_{v \in V(T)} \frac{1}{|(T, u)_{v \downarrow}| \Aut(v, (T, u))} ,
\]
so
\begin{equation}
  \sL_T(S) = \prod_{u \in S} \frac{|(T, S)_{u \downarrow}|}{\overline{\Aut}(u, (T, S)_{u \downarrow})} \prod_{v \in (T, S)_{u \downarrow}} \frac{1}{|(T, S)_{v \downarrow}| \Aut(v, (T, S)_{u \downarrow})} . \eqlabel{likelihood}
\end{equation}
In particular, this implies that the maximum likelihood estimate $S^*$
can be computed in polynomial time in $n$ for any fixed $k, \ell$,
just as in the case when $k = 1$~\cite{finding-adam}.

  \subsection{A vertex-confidence set with size subpolynomial in $(1/\eps)$} \seclabel{subpolynomial}
  For a tree $T$ and $u \in V(T)$, let
\[
  \varphi_T(u) = \prod_{v \in V(T) - \{u\}} |(T, u)_{v \downarrow}| ,
\]
omitting the $T$ subscript when $T$ is understood. We note, as noted
by Bubeck, Devroye, and Lugosi in \cite{finding-adam}, that
$1/\varphi$ resembles the expression of the likelihood in
\eqref{likelihood}, so that nodes with small values of $\varphi$
should be likely candidates for the root. For $K \ge 1$, let
$H^*_{\varphi; K}(T^\circ)$ denote the set of $K$ vertices in
$T^\circ$ minimizing their values of $\varphi$. In
\cite{finding-adam}, it is shown that $H^*_{\varphi; K}$ serves as an
effective root-finding algorithm for $T \sim \UA(n)$. We show that
this is also the case when $T \sim \UA(n, S)$.

\begin{prop}
  There are universal constants $c_1, c_2, c_3, c_4 > 0$ such that if
  $k > c_1$,
  $\eps \le \exp\left\{- c_2 (\log k)^{11} \right\}$, and
  \[
    K \ge c_3 \exp \left\{ c_4 \frac{\log (1/\eps)}{\log \log (1/\eps) + \log \log k } \right\} ,
  \]
  then
  \[
    \Pr\{ |V(S) \cap H^*_{\varphi; K}(T^\circ)| \ge 1\} \ge 1 - \eps .
  \]
\end{prop}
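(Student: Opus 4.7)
The plan is to adapt the argument for the unseeded ($k = 1$) case from \cite[Section 4]{finding-adam}, leveraging the conditional independence of seed-subtrees from \lemref{conditional-indep} to reduce the relevant probability estimates to calculations on URRTs. As in \propref{heart-upper}, it suffices to single out a representative vertex $r \in V(S)$ (I would take $r$ to be a centroid of $S$), to upper bound $\varphi(r)$ with high probability, to lower bound $\varphi(u_i)$ simultaneously over $i > K$ with high probability, and then to compare the two bounds and take a union bound.

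For the upper bound on $\log \varphi(r) = \sum_{v \ne r} \log |(T, r)_{v \downarrow}|$, I would partition $V(T) \setminus \{r\}$ along the seed-subtrees $(T, S)_{u \downarrow}$ for $u \in V(S)$ and apply \lemref{conditional-indep} to treat each $(T, S)_{u \downarrow}$ as a conditionally independent URRT. The URRT estimate from \cite[Section 4]{finding-adam} then applies term by term. Because $r$ is a centroid, \lemref{dir-convergence} and \lemref{beta-dom-1} imply that each outer factor $|(T, r)_{u \downarrow}|$ (for $u$ a neighbour of $r$ in $S$) is at most a roughly $(1 - c/k)$-fraction of $n$ with overwhelming probability, which amplifies the centrality of $r$ relative to the $k = 1$ case and is the source of the $\log \log k$ in the denominator of the claimed bound on $K$.

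For the lower bound on $\log \varphi(u_i)$ uniform in $i > K$, the argument from \cite{finding-adam} carries over essentially unchanged: delete the edges of $T$ whose endpoints both lie in $\{u_1, \ldots, u_K\} \cup V(S)$, obtain a forest with $\Theta(K + k)$ components whose sizes are asymptotically Dirichlet with all parameters equal to $1$ (via \lemref{dir-convergence}), and use a tail bound on the minimum component together with a union bound over $i > K$. This should yield
\begin{equation*}
  \log \varphi(u_i) \ge (n - 1) \log n - \frac{C n}{\log(K + k)}
\end{equation*}
uniformly for $i > K$, with probability at least $1 - \eps/2$.

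Comparing the two bounds, $\varphi(r) < \varphi(u_i)$ for all $i > K$ reduces to a constraint forcing $K$ to be large enough that the Dirichlet gain $Cn/\log(K + k)$ is strictly dominated by the improved upper bound on $\log \varphi(r)$ minus the $O(k \log n)$ overhead that the $k$-piece decomposition contributes. Solving for $K$ gives the advertised subpolynomial expression. The hardest part will be the careful accounting of the polylogarithmic correction terms in the URRT estimate of \cite{finding-adam} when summed across the $k$ pieces of the decomposition, and keeping these lower-order terms from eroding the $\log \log k$ improvement in the leading order. The hypothesis $\eps \le \exp\{-c_2 (\log k)^{11}\}$ is the regime in which these polylogarithmic losses are guaranteed to be negligible compared with $\log(1/\eps)$, so that the leading-order terms dictate the shape of the final bound.
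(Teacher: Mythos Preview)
Your proposal conflates the $\psi$-argument of \propref{heart-upper} with the $\varphi$-argument actually used here, and the resulting plan has a structural gap.

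The $\varphi$-estimator is not handled by separately upper-bounding $\varphi(r)$ and lower-bounding $\varphi(u_i)$. Both quantities are products of $n-1$ subtree sizes, and for any two vertices $w$ and $w'$ they share all factors except those along the $w$--$w'$ path; crude separate bounds of the shape $(n-1)\log n - Cn/\log(K+k)$ cannot isolate a gap of the right order. The paper (following \cite{finding-adam}) instead fixes $u_* = \argmin_{u \in V(S)} \varphi(u)$, and for each $v \in (T,S)_{u\downarrow}$ compares $\varphi(v)$ to $\varphi(u)$ directly via the telescoping identity
\[
  \varphi(v) \le \varphi(u) \iff \prod_{i=1}^{\ell} |(T,S)_{(u,j_1,\dots,j_i)\downarrow}| \ge \prod_{i=1}^{\ell} \bigl(n - |(T,S)_{(u,j_1,\dots,j_i)\downarrow}|\bigr),
\]
so that only the $\ell$ subtree sizes along the $u$--$v$ path enter. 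After rescaling by $n$ and passing to the limit, each such size is $B \prod_{m=1}^i U_{j_m,m}$ with $B \sim \Beta(1,k-1)$ and independent uniform products $U_{j_m,m}$; the analysis then splits into the product-of-uniforms tail from \cite[Lemma~1]{finding-adam} and a new term involving $\Pr\{B/(1-B) \ge 2/q^{1/\ell}\} \le \exp\{-(k-1)/q^{1/\ell}\}$.

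This last Beta tail is precisely the source of the $\log\log k$ in the denominator. Your proposed mechanism---that centroid neighbours have $|(T,r)_{u\downarrow}| \lesssim (1 - c/k)n$---is both the wrong estimate (centroid components have size $\le k/2$, so these factors concentrate near $n/2$, not near $(1-c/k)n$) and the wrong place to look: the gain comes from the fact that each seed-subtree $(T,S)_{u\downarrow}$ has size $\approx n/k$, which shrinks every factor along the path by $B$, not from a single outer factor at the centroid. Finally, the Dirichlet/minimum-component argument you sketch for the lower bound controls $\psi(u_i)$, not $\varphi(u_i)$; it does not yield a usable uniform lower bound on $\log\varphi(u_i)$.
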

\begin{proof}
  Let $u$ be a vertex of $S$, and label each node
  $v \in (T, S)_{u \downarrow}$ using an extended version of the
  labelling scheme from \cite{finding-adam}, where every node of $T$
  gets a label in
  \[
    \N^* = \N \cup \N^2 \cup \N^3 \cup \dots ,
  \]
  such that if we write $v = (u, j_1, \dots, j_\ell)$, we mean that
  $v \in (T, S)_{u \downarrow}$, and that $v$ is the $j_\ell$-th child
  of $(u, j_1, \dots, j_{\ell - 1})$. Let also
  \[
    s(v) = \sum_{i = 1}^\ell (\ell - i + 1) j_i ,
  \]
  which we denote by $s$ when the node $v$ is understood. Let $u_*$ be
  the node of $S$ minimizing its value of $\varphi$ in $T$, so
  \[
    \varphi(u_*) = \min_{u \in S} \varphi(u) .
  \]
  Let $K$ and $K'$ be related such that $K = |\{v \in (T, S)_{u_* \downarrow} \colon s(v) \le 3 K'\}|$. Then,
  \begin{align}
    \Pr\{u_* \not\in H_{\varphi; K}(T^\circ)\} &\le \Pr\{\exists v \colon s(v) > 3K' \text{ and } \varphi(v) \le \varphi(u_*) \} \notag \\
                                             &= \Pr\{\exists u \in S, \, v \in (T, S)_{u \downarrow} \colon s(v) > 3K' \text{ and } \varphi(v) \le \varphi(u_*)\} \notag \\
                                               &\le \Pr\{\exists u \in S, \, v \in (T, S)_{u \downarrow} \colon s(v) > 3K' \text{ and } \varphi(v) \le \varphi(u)\} \notag \\
                                             &\le \sum_{u \in V(S)} \Pr\{\exists v \in (T, S)_{u \downarrow} \colon s(v) > 3K' \text{ and } \varphi(v) \le \varphi(u)\} ,
  \end{align}
  By the arguments of \cite[Page 9, Equation (9)]{finding-adam}, we have
  \begin{align}
    \Pr\{\exists v \in (T, S)_{u \downarrow} \colon s(v) > 3K' \text{ and } \varphi(v) \le \varphi(u)\} \notag \\
    \qquad \le 2 \sum_{v \in (T, S)_{u \downarrow} \colon s(v) \in (K', 3K']} \Pr\{\varphi(v) \le \varphi(u)\} . \eqlabel{union-bound}
  \end{align}
  Observe now that for $v = (u, j_1, \dots, j_\ell)$,
  \[
    \varphi(v) \le \varphi(u) \iff \prod_{i = 1}^{\ell} |(T, S)_{(u, j_1, \dots, j_i) \downarrow}| \ge \prod_{i = 1}^{\ell} (n - |(T, S)_{(u, j_1, \dots, j_i) \downarrow}|) .
  \]
  Observe that
  \begin{equation}
    \frac{|(T, S)_{(u, j_1, \dots, j_i) \downarrow}|}{n} \overset{d}{\to} B \prod_{m = 1}^i U_{j_m, m} , \eqlabel{conv-dist-sub}
  \end{equation}
  as $n \to \infty$, where each $U_{j_m, m}$ is an independent product
  of $j_m$ independent standard uniform random variables, and
  $B \sim \Beta(1, k - 1)$. So, after dividing through by $n$,
  \begin{align}
    &\Pr\{\varphi(v) \le \varphi(u)\} \eqlabel{failure-prob-v-r} \\
 &\qquad \le \Pr\left\{ \prod_{i = 1}^\ell B \prod_{m = 1}^i U_{j_m, m} \ge \prod_{i = 1}^\ell \left(1 - B \prod_{m = 1}^i U_{j_m, m} \right) \right\}  \notag \\
                                     &\qquad = \Pr\left\{ \prod_{i = 1}^\ell \prod_{m = 1}^i U_{j_m, m} \ge \prod_{i = 1}^\ell \left(\frac{1}{B} - \prod_{i = 1}^i U_{j_m, m} \right) \right\} \notag \\
                                     &\qquad \le \Pr\left\{ \prod_{i = 1}^\ell \prod_{m = 1}^i U_{j_m, m} \ge t \right\} +
 \Pr\left\{ \prod_{i = 1}^\ell \left(\frac{1}{B} - \prod_{m = 1}^i U_{j_m, m} \right) \le t \right\} , \eqlabel{lhs-rhs}
  \end{align}
  where $t > 0$ is to be specified later. The first inequality above
  follows by the portmanteau lemma and the convergence in distribution
  noted in \eqref{conv-dist-sub}.

  In \cite[Lemma 1]{finding-adam}, it is shown that
  \[
    \Pr\left\{ \prod_{i = 1}^\ell \prod_{m = 1}^i U_{j_m, m} \ge t \right\} \le \exp\left\{ - \sqrt{s/2} \log \left( \frac{s}{e \log (1/t) } \right) \right\} .
  \]
  On the other hand, observing that
  $1 - e^{-x} \ge (1/2) \min\{x, 1\}$ for all $x \ge 0$, we have
  \begin{align}
    &\Pr\left\{ \prod_{i = 1}^\ell \left(\frac{1}{B} - \prod_{m = 1}^i U_{j_m, m} \right) \le t \right\} \notag \\
    &\qquad \le \Pr\left\{ \prod_{i = 1}^\ell \left(\frac{1}{B} - 1 + \frac{1}{2} \min\left\{ \sum_{m = 1}^i \log(1/U_{j_m, m}), 1 \right\} \right) \le t \right\} \notag \\
        &\qquad \le \Pr\left\{ \prod_{i = 1}^\ell \left(\frac{1}{B} - 1 + \frac{1}{2} \min\left\{ \sum_{m = 1}^i E_m, 1 \right\}\right) \le t \right\} , \eqlabel{rhs-2}
  \end{align}
  where $E_1, E_2, \dots$ are independent standard exponential random
  variables. By the inequality of arithmetic and geometric means,
  \begin{align}
    \eqref{rhs-2} &\le \Pr\left\{ 2^{\ell/2} \left(\frac{1}{B} - 1\right)^{\ell/2} \left(\prod_{i = 1}^\ell \min\left\{ \sum_{m = 1}^i E_m, 1 \right\} \right)^{1/2} \le t \right\} \notag \\
        &\le \Pr\left\{ 2^\ell \left(\frac{1}{B} - 1\right)^\ell X \le t^2\right\} , \eqlabel{rhs-3}
  \end{align}
  where $X$ is defined by
  \[
    X = \prod_{i = 1}^\infty \min\left\{ \sum_{m = 1}^i E_m, 1 \right\} .
  \]
  Then, for $q > 1$ to be specified,
  \[
    \eqref{rhs-3} \le \Pr\left\{\frac{B}{1 - B} \ge \frac{2}{q^{1/\ell}} \right\} + \Pr\left\{X \le \frac{t^2}{q}\right\} .
  \]
  By \cite[Lemma 2]{finding-adam}, we know that
  \begin{equation}
    \Pr\left\{X \le \frac{t^2}{q}\right\} \le \frac{6 t^{1/2}}{q^{1/4}} , \eqlabel{x-tail}
  \end{equation}
  and it is also known that
  \begin{equation}
    \Pr\left\{ \frac{B}{1 - B} \ge \frac{2}{q^{1/\ell}} \right\} = \frac{1}{(1 + 2/q^{1/\ell})^{k - 1}} \le \exp\left\{- \frac{k - 1}{q^{1/\ell}}\right\} , \eqlabel{b-tail}
  \end{equation}
  where the inequality follows since $q > 1$, where here we have used
  that $\frac{1}{1 + x} \le e^{-x/2}$ for $0 \le x \le 2$.  Optimizing
  a choice of $q$ in \eqref{x-tail} against \eqref{b-tail}, one can
  see that when $t > (1/36) e^{-2 (k - 1)}$,
  \begin{align*}
    \eqref{rhs-3} &\le 12 t^{1/2} \left( \frac{\log(1/6t^{1/2}) + (\ell/4) \log \left( \frac{k - 1}{\log (1/6t^{1/2})} \right)}{k - 1} \right)^{\ell/4} \\
                &\le 12 t^{1/2} \left( \frac{\log(1/6t^{1/2}) + (\sqrt{2s}/4) \log \left( \frac{k - 1}{\log (1/6t^{1/2})} \right) }{k - 1} \right)^{\sqrt{2s}/4} ,
  \end{align*}
  where we used the fact that
  \[
    s = \sum_{i = 1}^\ell (\ell - i + 1) j_i \ge \ell^2/2 .
  \]
  
  It remains to make an optimal of choice of $t$. If we pick $t$ such
  that
  \[
    \log (1/6t^{1/2}) = s^{0.6} - (\sqrt{2s}/4) \log (k - 1) ,
  \]
  it can be shown that if $s > 10^{12}$, $t < 1/6^6$, and
  $\log(1/6t^{1/2}) > s^{0.6}/2$, then
  \[
    \eqref{failure-prob-v-r} \le 2 \exp\left\{ - \sqrt{s/2} \log \left( (1/25) s^{0.3} \log (k - 1) \right) \right\} . 
  \]

  Recall the union bound \eqref{union-bound},
  \[
    \Pr\{u_* \not\in H_{\varphi; K}(T^\circ)\} \le 2 \sum_{u \in V(S)} \left( \sum_{v \in (T, S)_{u \downarrow} \colon s(v) \in (K', 3K']} \Pr\{\varphi(v) \le \varphi(u)\} \right) .
  \]
  We know that for any $u \in V(S)$,
  \[
    |\{v \in (T, S)_{u \downarrow} \colon s(v) \in (K', 3K']\}| \le 3K'
    \exp\{ \pi \sqrt{2 K'} \} ,
  \]
  (see \cite[Page 8, Equation (6)]{finding-adam}) and by the
  conditions imposed on $s$ and $k$,
  \[
    6 k K' \exp \{\pi \sqrt{2 K'} \} \le 6 \exp\{ 11 \sqrt{K'} \} ,
  \]
  and therefore,
  \begin{align*}
    \Pr\{u_* \not\in H_{\varphi; K}(T^\circ)\} &\le 6 \exp\left\{ 11 \sqrt{K'} - \sqrt{K'/2} \log \left((1/25) K'^{0.3} \log (k - 1)\right) \right\} \\
                                               &\le 6 \exp\left\{ - (1/2) \sqrt{K'} \log \left((1/25) K'^{0.3} \log (k - 1) \right) \right\}
  \end{align*}
  for $K' > 10^{77}$, which holds for $k > 10^{10^8}$. In order to
  make this probability at most $\eps$, we can pick for some universal
  constant $c_1 > 0$,
  \[
    \sqrt{K'} = c_1  \frac{\log (1/\eps)}{\log \log (1/\eps) + \log \log k} .
  \]
  In order to satisfy the condition that
  $\log(1/6t^{1/2}) > s^{0.6}/2$, this requires that for some constant
  $c_2 > 0$,
  \[
    \eps < \exp \left\{ -c_2 (\log k)^{11} \right\} .
  \]
  In this case, there are universal constants $c_3, c_4 > 0$ such that
  \begin{align*}
    K &= |\{v \in (T, S)_{u_* \downarrow} \colon s(v) \le 3K'\} \\
      &\le c_3 \exp\left\{ c_4  \frac{\log (1/\eps)}{\log \log (1/\eps) + \log \log k} \right \} . \qedhere
  \end{align*}
\end{proof}

  \subsection{A lower bound}\seclabel{heart-lower}
  To obtain a lower bound on $K^*(k, \ell, \eps)$, one can use the
likelihood of an observation under $\UA(n, S)$, computed in
\secref{mle}, and as in \cite[Theorem 4]{finding-adam}, one can then
construct a family of probable trees whose maximum likelihood estimate
for $K$-sized sets to intersect the seed will avoid every node of the
seed---the right choice for $K$ so that such a tree appears with
probability at least $\eps$ would then give a lower bound on
$K^*(k, \ell, \eps)$. Instead of this lengthy retelling, we use
\lemref{conditional-indep} to show how any lower bound on $K(\eps)$
also offers a lower bound on $K^*(k, \ell, \eps)$.

Define $H^*_{K, k}(T^\circ)$ to be the maximum likelihood estimate for
the set of size $K$ most likely to contain at least one node of the
true seed of $T \sim \UA(n, S)$, given $|S| = k$ and $|L(S)| = \ell$:
\[
  H^*_{K, k, \ell}(T^\circ) = \argmax_{H^* \in V(T)^{(K)}} \sum_{S' \in
    \sS_{k, \ell}(T) \colon |V(S') \cap H^*| \ge 1} \sL_{T^\circ}(S') .
\]
In order to prove that $K^*(k, \ell, \eps) \ge K$ for some particular
$K$, it suffices to show that for some specific $n$, and for all $S$
with $|S| = k$ and $|L(S)| = \ell$,
\[
  \Pr_{T \sim \UA(n, S)}\{V(S) \cap H_{K, k, \ell}^*(T^\circ) = \emptyset\}
  \ge \eps .
\]

\begin{proof}[Proof of \thmref{heart-lower}]
  Let $m = k^2 K$. For brevity, let $M_u = |(T, S)_{u \downarrow}|$,
  and let $M_* = \min_{u \in S} M_u$. By \lemref{sukhatme},
  \[
    \frac{M_*}{n} \overset{d}{\to} \frac{\Beta(1, k - 1)}{k}
  \]
  as $n \to \infty$. Then,
  \begin{align*}
    \Pr_{T \sim \UA(m, S)} \{M_*/m > 1/k^2\} &\ge \liminf_{n \to \infty} \Pr_{T \sim \UA(n, S)} \{M_*/n > 1/k^2\} \\
                                             &\ge \Pr\{\Beta(1, k - 1) > 1/k\} \\
                                             &= (1 - 1/k)^{k - 1} \\
                                             &\ge e^{-1} .
  \end{align*}
  Let
  \[
    \sM = \left\{(m_u \colon u \in S) \colon m_u \in \N, \, \sum_{u \in S} m_u = m, \, \min_{u \in S} m_u > \frac{m}{k^2} \right\} .
  \]
  Upon conditioning,
  \begin{align*}
    &\Pr_{T \sim \UA(m, S)} \{V(S) \cap H_{K, k, \ell}^*(T^\circ) = \emptyset\} \\
    &\qquad \ge \Pr\{M_*/m > 1/k^2\} \\
    &\qquad \hphantom{\ge} \sum_{(m_u \colon u \in S) \in \sM} \Pr\left\{\bigcap_{u \in S} [u \not\in H_{K, k, \ell}^*(T^\circ) \cap (T, S)_{u \downarrow}] \; \middle| \; \bigcap_{u \in S} [M_u = m_u]\right\} \\
    &\qquad \ge e^{-1} \sum_{(m_u \colon u \in S) \in \sM} \left(\prod_{u \in S} \Pr\Bigl\{u \not\in H_{K, k, \ell}^*((T, S)_{u \downarrow}^\circ) \mathrel{\Big|} M_u = m_u \Bigr\} \right) \\
    &\qquad \ge e^{-1} \Bigl(\Pr_{T \sim \UA(K + 1)} \{u_1 \not\in H_{K, 1}^*(T^\circ)\}\Bigr)^k ,
  \end{align*}
  where this last line follows from the optimality of
  $H^*_{K, 1}((T, S)_{u \downarrow}^\circ)$ as a root estimator in
  $(T, S)_{u \downarrow}^\circ$, and since conditionally upon
  $M_u = m_u$, $(T, S)_{u \downarrow}$ is distributed as $\UA(m_u)$ by
  \lemref{conditional-indep}. By definition, if
  $K < K((e \eps)^{1/k})$, then the probability that
  $H^*_{K, k, \ell}(T^\circ)$ avoids $V(S)$ exceeds $\eps$.
\end{proof}

\corref{heart-lower} follows immediately.

\section{Finding all seed vertices}\seclabel{whole}
  \subsection{Upper bounds}\seclabel{whole-upper}
  \subsubsection{A familiar strategy}\seclabel{familiar}

We can also use the strategy $H^*_{\psi; K}$ of \cite{finding-adam}
and \secref{heart-upper} to get all the nodes of $S$; in this section,
when the procedure is used to find all nodes of the seed, we omit the
asterisk for notational consistency. More specifically, we study the
smallest choice of $K$ for which $H_{\psi; K}$ contains all nodes of
$S$ with probability at least $1 - \eps$, and such a choice will give
an upper bound on $K(k, \ell, \eps)$.

If $\psi(u) = |(T, u)_{v \downarrow}|$ for $v$ adjacent to $u$, we
will say that $\psi(u)$ is \emph{witnessed at $v$}.

\begin{prop}\proplabel{whole-upper}
  There are universal constants $c, \eps_0 > 0$ such that, if
  $\eps \le \eps_0$ and 
  \[
    K \ge (c k \ell/\eps) \log (k \ell/\eps) ,
  \]
  then
  \[
    \Pr\{V(S) \subseteq H_{\psi; K}(T^\circ)\} \ge 1 - \eps .
  \]
\end{prop}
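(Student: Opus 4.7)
The plan is to mirror the proof of \propref{heart-upper} with a modified decomposition of the failure event: the inclusion $V(S) \subseteq H_{\psi; K}(T^\circ)$ is implied by the joint event that $\psi(u) \le nt$ for every $u \in V(S)$ and $\psi(u_j) > nt$ for every $j > K$. Indeed, in that case $\{v \in V(T) \colon \psi(v) \le nt\}$ consists of at most $k$ seed vertices together with at most $K - k$ non-seed elites (those among $u_{k+1}, \ldots, u_K$), hence contains at most $K$ vertices, all of which lie in $H_{\psi; K}(T^\circ) \supseteq V(S)$. Writing $\psi^{**} = \max_{u \in V(S)} \psi(u)$, this gives
\[
  \Pr\{V(S) \not\subseteq H_{\psi; K}(T^\circ)\} \le \Pr\{\psi^{**} > nt\} + \Pr\{\exists j > K \colon \psi(u_j) \le nt\}.
\]
The second term is bounded exactly as in \propref{heart-upper}: since in $\UA(n, S)$ the first $K$ vertices $\{u_1, \ldots, u_K\}$ induce a subtree of $T$, for each $j > K$ one has $\psi(u_j) \ge \min_{1 \le i \le K} \sum_{m \neq i} |T_m|$, where $T_1, \ldots, T_K$ are the components of the forest obtained by removing the internal edges of this subtree, and a union bound combined with \lemref{dir-convergence} yields this contribution at most $K t^{K - 1}$.

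The key new ingredient is the bound on $\Pr\{\psi^{**} > nt\}$. For each $u \in V(S)$, the components of $T - u$ split into non-seed-direction subtrees (each contained in $(T, S)_{u \downarrow}$, whose normalised size is bounded above by a $\Beta(1, k - 1)$ variate and hence negligible for the threshold chosen below) and, for each seed neighbour $v$ of $u$ in $S$, one ``seed-direction'' component of size $\sum_{w \in V_v} |(T, S)_{w \downarrow}|$ with $V_v$ the set of seed vertices on $v$'s side of $uv$; by \lemref{dir-convergence} the normalised size of this component tends to $\Beta(a_v, k - a_v)$ with $a_v = |V_v|$. Applying the Beta tail estimate $\Pr\{\Beta(a, k - a) > 1 - s\} \le \binom{k - 1}{k - a} s^{k - a}$ and summing over ordered seed edges, the leading first-order term $(k - 1) s$ arises with coefficient equal to the number $\ell$ of directed edges $u \to v$ with $u$ a leaf of $S$ (where $a_v = k - 1$), while each remaining directed seed edge has $a_v \le k - 2$ and contributes a term of the form $\binom{k - 1}{k - a_v} s^{k - a_v}$ with exponent at least $2$. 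A combinatorial bound on the number of directed seed edges with $k - a_v = j$ in terms of $\ell$---each such edge corresponds to a size-$j$ pendant subtree of $S$, and every such subtree of size at least $2$ contains a leaf of $S$---then confines the total higher-order contribution to order $\ell (k s)^2$.

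Choosing $s = 1 - t = c \eps / (k \ell)$ for a small universal constant $c$ thus makes $\Pr\{\psi^{**} > nt\} \le \eps / 2$ whenever $\eps$ lies below a universal constant $\eps_0$, and then $K t^{K - 1} \le K \exp\{-(K - 1) s\}$ drops below $\eps / 2$ as soon as $K \ge C (k \ell / \eps) \log(k \ell / \eps)$ for a universal constant $C$. The principal obstacle is the structural step that bounds the number of directed seed edges with each small value of $k - a_v$ by a constant times $\ell$; it is this reduction that recovers $\ell$, rather than $k$, as the correct dependence in $\Pr\{\psi^{**} > nt\}$ and hence in the proposition.
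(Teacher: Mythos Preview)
Your decomposition and the handling of the second term match the paper's proof. The difference lies in how you control $\Pr\{\psi^{**} > nt\}$, and here the paper's argument is considerably cleaner than yours. The paper observes a structural fact: if $\psi^{**}$ is attained at $u \in V(S)$ and witnessed at a neighbour $v$, then $v$ must lie in $V(S)$ (else any seed neighbour $w$ of $u$ satisfies $\psi(w) \ge |(T,w)_{u\downarrow}| > |(T,u)_{v\downarrow}| = \psi(u)$, contradicting maximality), and iterating shows that $u$ must in fact be a leaf of $S$ witnessed by its unique seed neighbour. Hence
\[
  \psi^{**} = \max_{u \in L(S)} \sum_{v \in V(S) - u} |(T,S)_{v\downarrow}|,
\]
which immediately gives a union of only $\ell$ events, each governed by a $\Beta(k-1,1)$ tail; one then takes $t = (1 - \eps/(2\ell))^{1/(k-1)}$ and the rest is arithmetic. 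No combinatorics on pendant subtrees is needed at all.

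Your route---union-bounding over all directed seed edges and then arguing that the number $N_j$ of directed edges with $k - a_v = j$ is at most $\ell$---does work, but the justification you give is incomplete. Saying that ``every pendant of size at least $2$ contains a leaf of $S$'' does not by itself bound $N_j$ by $\ell$; you need that distinct pendants of the same size $j$ can be assigned distinct leaves. This is true (for $j \le k/2$ the size-$j$ pendants are pairwise disjoint, since any two pendants in a tree are nested, disjoint, or cover $V(S)$; for $j > k/2$ pass to complements), but it is an extra step you have not supplied. Without $N_j \le \ell$ your higher-order bound degrades to order $k(ks)^2$, which with $s = c\eps/(k\ell)$ would force $\eps \lesssim \ell^2/k$ rather than a universal $\eps_0$.

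In short: your proof is salvageable with the missing laminarity argument, but the paper's leaf-witness observation gets the factor $\ell$ for free and makes the whole $\psi^{**}$ estimate a two-line computation.
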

\begin{proof}
  \begin{figure}
    \centering
    \includegraphics[width=0.6\textwidth]{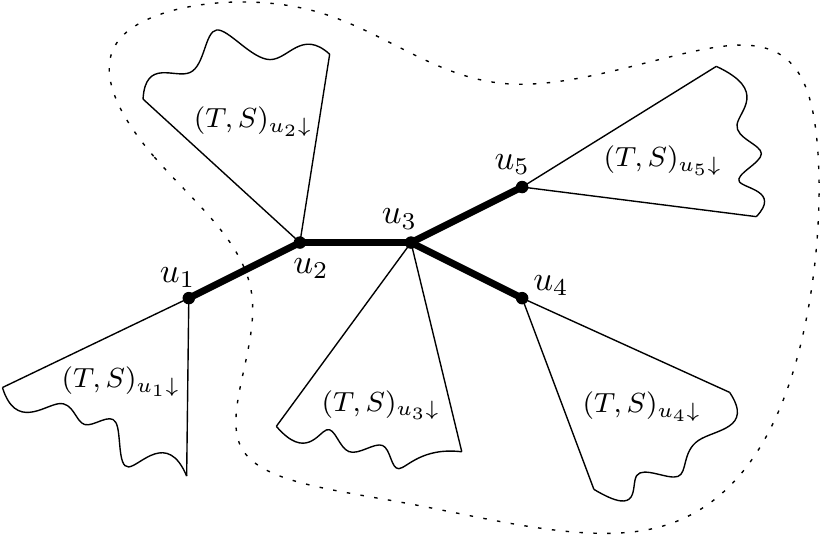}
    \caption{The seed $S$ has vertex set $\{u_1, \dots,
      u_5\}$. Suppose that $\psi^* = \psi(u_1)$. Then, $\psi^*$ is
      witnessed at $u_2$, and $\psi^*$ correponds to the size of the
      outlined subgraph.}
    \figlabel{whole-upper}
  \end{figure}
  The proof is similar to that of \propref{heart-upper}. Write
  \[
    \psi^* = \max\{\psi(u_1), \dots, \psi(u_k)\} .
  \]
  Observe that if for all $i > K$, $\psi(u_i) > \psi^*$, then
  $V(S) \subseteq H_{\psi; K}(T^\circ)$. So,
  \begin{equation}
    \Pr\{V(S) \not\subseteq H_{\psi; K}(T^\circ)\} \le \Pr\{\psi^* \ge n t \} + \Pr\{\exists i > K \colon \psi(u_i) \le n t\} \eqlabel{phi-tail-whole}
  \end{equation}
  for $t > 0$ to be specified. We handle the first term in
  \eqref{phi-tail-whole}: Suppose that $\psi^*$ is attained by
  $u \in V(S)$ and witnessed by its child $v \not\in V(S)$, \ie
  \[
    \psi^* = \psi(u) = |(T, u)_{v \downarrow}| ,
  \]
  Then, $u$ has a neighbour $w \in V(S)$, and
  \[
    \psi(w) \ge |(T, w)_{u \downarrow}| > |(T, u)_{v \downarrow}| = \psi(u) ,
  \]
  so $\psi(u)$ cannot be maximum. Thus, $\psi^*$ must be witnessed by
  a node of $V(S)$, and in particular, one can iterate the above
  motion to see that $\psi^*$ must be attained by a leaf of $S$ and
  witnessed by its unique neighbour in $S$, so
  \[
    \psi^* = \max_{u \in L(S)} \sum_{v \in V(S) - u} |(T, S)_{v \downarrow}| .
  \]
  See \figref{whole-upper} for an illustration. By
  \lemref{dir-convergence}, for any $u \in L(S)$,
  \[
    \frac{1}{n} \sum_{v \in V(S) - u} |(T, S)_{v \downarrow}| \overset{d}{\to} \Beta(k - 1, 1) ,
  \]
  as $n \to \infty$, so
  \begin{align*}
    \Pr\{\psi^* \ge nt\} &\le \Pr\left\{\exists u \in L(S) \colon \sum_{v \in V(S) - u} |(T, S)_{v \downarrow}| \ge n t \right\} \\
                         &\le \ell \Pr\{\Beta(k - 1, 1) \ge t\} \\
                         &= \ell (1 - t^{k - 1}) .
  \end{align*}
  We can make this at most $\eps/2$ by choosing
  $t = (1 - \eps/(2\ell))^{1/(k - 1)}$.

  For the second term in \eqref{phi-tail-whole}, the argument is again
  identical to that of \cite[Theorem 3]{finding-adam}, and we can say
  that
  \begin{align*}
    \Pr\{\exists i > K \colon \psi(u_i) \le nt\} \le K t^{K - 1} \le K e^{-\frac{\eps (K - 1)}{2 (k - 1)\ell}} .
  \end{align*}
  Picking $K \ge (c k \ell/\eps) \log (k\ell/\eps)$ for some constant
  $c > 0$ gives the desired result, as long as $\eps \le \eps_0$ for
  some constant $\eps_0 > 0$.
\end{proof}
Again, the set $H_{\psi; K}(T^\circ)$ can be computed in $O(n)$
time. We show in \lemref{leaf-existence} that the result of
\thmref{whole-upper} involves the right order for $K$ for the strategy
given by $H_{\psi; K}$, up to logarithmic factors: When
$K \le k \ell/(4 \eps)$, then with probability at least $\eps$, at
least one leaf of $S$ is also a leaf of $T \sim \UA(K, S)$, and any
leaf of $T$ maximizes the value of $\psi$.

\subsubsection{A reduction to intersection testing}

We now consider an alternative procedure for locating all nodes of the
seed, which sometimes requires fewer nodes than $H_{\psi; K}$ to
succeed with probability $1 - \eps$. We define the set
$H_{\phi; K, k, \ell, \eps}(T^\circ)$ as follows:
\begin{enumerate}[label=(\roman*)]
\item Let $H^*$ be a set of size $K^*(k, \ell, \eps/2)$ which
  intersects the seed with probability at least $1 - \eps/2$;
\item For each $u \in H^*$, traverse the tree $T$ in a depth-first
  manner around $u$;
\item When exploring $v \in T$, add $v$ to
  $H_{\phi; K, k, \ell, \eps}(T^\circ)$ if
  $|(T, u)_{v \downarrow}| \ge n\eps/(2 k\ell)$, and stop exploring
  this path otherwise. Stop at any point if the size of the set
  exceeds $K$.
\end{enumerate}
If we can prove that $H_{\phi; K, k, \ell, \eps}(T^\circ)$ includes
the whole seed with probability at least $1 - \eps$, and that $K$ is
sufficiently large, we obtain an upper bound on $K(k, \ell, \eps)$.
\begin{prop}\proplabel{whole-upper-ez}
  If $K \ge (2k \ell/\eps) K^*(k, \ell, \eps/2)$, then
  \[
    \Pr\{V(S) \subseteq H_{\phi; K, k, \ell, \eps}(T^\circ)\} \ge 1 -
    \eps .
  \]
\end{prop}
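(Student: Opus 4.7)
The plan is to decompose the failure event $\{V(S) \not\subseteq H_{\phi; K, k, \ell, \eps}(T^\circ)\}$ into two parts. First, by construction of $H^*$ in step (i) and the definition of $K^*(k, \ell, \eps/2)$, the event $H^* \cap V(S) = \emptyset$ has probability at most $\eps/2$. Second, I must handle the complementary event: $H^* \cap V(S) \ne \emptyset$, yet for every $u \in H^* \cap V(S)$ the depth-first search around $u$ in steps (ii)--(iii) fails to enumerate all of $V(S)$ before the global cap $K$ is exhausted.

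Fix any $u \in H^* \cap V(S)$ and set $M = n\eps/(2k\ell)$. Ignoring the cap, the DFS around $u$ adds exactly the vertices in $\{v \colon |(T, u)_{v \downarrow}| \ge M\}$, since subtree sizes are monotone along root-to-leaf paths and this family is therefore ancestor-closed in the $u$-rooted orientation. Hence the DFS reaches every vertex of $V(S)$ iff $|(T, u)_{v \downarrow}| \ge M$ for every $v \in V(S) \setminus \{u\}$. Because $V(S)$ induces a connected subtree of $T$, rooting $S$ at $u$ gives $(T, u)_{v \downarrow} \supseteq (T, S)_{v \downarrow}$ for every $v \in V(S) \setminus \{u\}$, with equality precisely when $v$ is a leaf of the rooted tree $(S, u)$. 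The minimum of $|(T, u)_{v \downarrow}|$ over $v \in V(S) \setminus \{u\}$ is therefore attained at such a leaf, of which there are at most $\ell$.

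By \lemref{dir-convergence}, $|(T, S)_{v \downarrow}|/n \overset{d}{\to} \Beta(1, k-1)$, and a standard computation gives
\[
  \Pr\{\Beta(1, k-1) \le \eps/(2k\ell)\} = 1 - (1 - \eps/(2k\ell))^{k-1} \le (k-1)\eps/(2k\ell) \le \eps/(2\ell) .
\]
Union-bounding over the at most $\ell$ leaves of $(S, u)$ bounds the second event by $\eps/2$, for $n$ sufficiently large; combined with the first, $\Pr\{V(S) \not\subseteq H_\phi\} \le \eps$.

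The remaining technicality, and the main obstacle, is that the global cap $K = (2k\ell/\eps) K^*(k, \ell, \eps/2)$ should not be exhausted before the seed is enumerated. Since each added vertex $v$ witnesses $|(T, u)_{v \downarrow}| \ge M$, one heuristically expects $O(n/M) = O(k\ell/\eps)$ additions per start, so the factor $2$ in the bound on $K$ is chosen to absorb this count over all $|H^*| = K^*$ starts. Justifying this counting rigorously requires a structural bound on the number of ``heavy'' subtrees rooted at an arbitrary vertex of the uniform attachment tree $T \sim \UA(n, S)$, which is the subtlest piece of the argument.
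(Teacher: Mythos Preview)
Your decomposition and the treatment of the two failure events match the paper's proof almost exactly. For the ``small subtree'' event you union-bound over the at most $\ell$ leaves of $(S,u)$ and use $\Pr\{\Beta(1,k-1)\le \eps/(2k\ell)\}\le \eps/(2\ell)$; the paper instead applies \lemref{sukhatme} to get $\min_{u\in L(S)}|(T,S)_{u\downarrow}|/n \overset{d}{\to}\Beta(1,k-1)/\ell$ and then bounds $\Pr\{\Beta(1,k-1)\le \eps/(2k)\}$. Both routes give $\eps/2$, and the paper's formulation has the small advantage that the event $\{\min_{v\in L(S)}|(T,S)_{v\downarrow}|\ge M\}$ is visibly independent of which $u\in H^*\cap V(S)$ one starts the DFS from, so no ``fix any $u$'' sleight of hand is needed.

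Where you and the paper differ is the cap. You flag the bound on the number of heavy vertices as ``the subtlest piece'' and leave it heuristic; the paper dispatches it in a single sentence, asserting that for each fixed $u$ there are at most $2k\ell/\eps$ vertices $v$ with $|(T,u)_{v\downarrow}|\ge n\eps/(2k\ell)$, whence the total over all $|H^*|=K^*(k,\ell,\eps/2)$ starts is at most $K$. You are right to be wary: read as a deterministic statement about arbitrary trees this is false (a path on $n$ vertices rooted at an endpoint has roughly $n(1-\eps/(2k\ell))$ such vertices, not $2k\ell/\eps$). The paper supplies no further argument, so on this point your proposal and the paper's proof are in the same position---you simply say so, while the paper does not.
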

\begin{proof}
  We show the failure probability is small enough:
  \begin{align*}
    \Pr\{V(S) \not\subseteq H_{\phi; K, k, \ell, \eps}(T^\circ)\} \le \Pr\biggl\{V(S) \cap H^* = \emptyset\biggr\} + \Pr\biggl\{\min_{u \in L(S)} |(T, S)_{u \downarrow}| < \frac{n\eps}{2 k \ell}\biggr\} .
  \end{align*}
  The first term is at most $\eps/2$ by definition. For the second
  term, note that
  \[
    \frac{\min_{u \in L(S)} |(T, S)_{u \downarrow}|}{n} \overset{d}{\to} \frac{\Beta(1, k - 1)}{\ell} ,
  \]
  as $n \to \infty$ by \lemref{sukhatme}, so
  \begin{align*}
    \Pr\biggl\{\min_{u \in L(S)} |(T, S)_{u \downarrow}| < \frac{n \eps}{2k \ell}\biggr\} &\le \Pr\biggl\{\Beta(1, k - 1) \le \frac{\eps}{2k}\biggr\} \\
                                                                      &= 1 - \left(1 - \frac{\eps/2}{k}\right)^{k - 1} \\
                                                                                 &\le 1 - e^{-\eps/2} \\
                                                                                 &\le \eps/2 ,
  \end{align*}
  as desired. Finally, for $u$ fixed, there are at most
  $2 k \ell/\eps$ nodes $v$ such that
  $|(T, u)_{v \downarrow}| \ge n\eps/(2k\ell)$, so $K$ is indeed large
  enough to include all desired nodes.
\end{proof}
By a remark in \secref{mle}, the set $H^*$ in the above construction
can be computed in polynomial time, so
$H_{\phi; K, k, \ell, \eps}(T^\circ)$ can be computed in polynomial
time. \thmref{whole-upper} follows immediately from
\propref{whole-upper} and \propref{whole-upper-ez}.

  \subsection{Lower bounds}\seclabel{whole-lower}
  By \eqref{whole-harder-than-heart}, we have the same lower bound on
$K(k, \ell, \eps)$ as in \thmref{heart-lower}, \ie
\begin{cor}\corlabel{whole-lower-small-eps}
  There are universal constants $c_1, c_2, c_3 > 0$, and such that if
  $\eps \le e^{- c_1 k}$, then
  \[
    K(k, \ell, \eps) \ge c_2 \exp\left\{ c_3 \sqrt{\frac{\log
        (1/\eps)}{k}}\right\} .
  \]
\end{cor}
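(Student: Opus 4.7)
The plan is essentially a one-line deduction, so the proposal is short: combine the trivial comparison inequality \eqref{whole-harder-than-heart} with the explicit lower bound already established for $K^*$ in Corollary~\ref{cor:heart-lower}.

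Concretely, the first step is to observe that any algorithm which reports a set $H$ guaranteed to contain \emph{all} vertices of the seed with probability at least $1 - \eps$ is, in particular, an algorithm reporting a set which intersects the seed in at least one vertex with probability at least $1 - \eps$. This is precisely the content of \eqref{whole-harder-than-heart}, namely $K^*(k, \ell, \eps) \le K(k, \ell, \eps)$, so every lower bound on $K^*(k, \ell, \eps)$ is automatically a lower bound on $K(k, \ell, \eps)$.

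The second step is to plug in Corollary~\ref{cor:heart-lower}, which asserts that there exist universal constants $c_1, c_2, c_3 > 0$ such that for $\eps \le e^{-c_1 k}$,
\[
K^*(k, \ell, \eps) \;\ge\; c_2 \exp\!\left\{ c_3 \sqrt{\frac{\log(1/\eps)}{k}} \right\}.
\]
Chaining the two inequalities yields the claimed bound on $K(k, \ell, \eps)$ with the same constants (and the same constraint $\eps \le e^{-c_1 k}$).

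There is no genuine obstacle here: all of the real work sits upstream in Theorem~\ref{thm:heart-lower} (the reduction through $H^*_{K, k, \ell}$ and Lemma~\ref{lem:conditional-indep} that turns a lower bound for the single-vertex seed problem, via the Bubeck--Devroye--Lugosi lower bound for $K(\eps)$, into a lower bound for $K^*(k, \ell, \eps)$) and in Corollary~\ref{cor:heart-lower} (which makes the bound explicit). Once those are in hand, Corollary~\ref{cor:whole-lower-small-eps} is immediate.
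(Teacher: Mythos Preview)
Your proposal is correct and matches the paper's own argument exactly: the paper also derives this corollary in one line by invoking \eqref{whole-harder-than-heart} to transfer the lower bound of \thmref{heart-lower} (made explicit in \corref{heart-lower}) from $K^*(k,\ell,\eps)$ to $K(k,\ell,\eps)$.
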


Unlike the case for $K^*(k, \ell, \eps)$, we do not know that
$K(k, \ell, \eps)$ is at most linear in $k$ when $\eps > e^{-c k}$;
our best upper bound from \thmref{whole-upper} says that
$K(k, \ell, \eps)$ is at most exponential in $k$, while the lower
bound from \corref{whole-lower-small-eps} does not apply. We thus
search for a lower bound on $K(k, \ell, \eps)$ which applies in the
regime when $\eps$ is large.

As in \secref{heart-lower}, define $H_{K, k, \ell}(T^\circ)$ to be the
set of size $K$ most likely to contain all the nodes of the true seed
of $T \sim \UA(n, S)$, given $|S| = k$ and $|L(S)| = \ell$:
\[
  H_{K, k, \ell}(T^\circ) = \argmax_{H \in V(T)^{(K)}} \sum_{S' \in \sS_{k, \ell}(T) \colon V(S') \subseteq H} \sL_{T^\circ}(S') .
\]

If $\eps$ is at least some positive constant, then
$K(k, \ell, \eps) \le c k \ell$ for some constant $c > 0$ by
\thmref{whole-upper}. We noted in \secref{familiar}, as a result of
\lemref{leaf-existence}, that if $H_{\psi; K}$ were the optimal
strategy for picking $K$ nodes to include the whole seed with
probability at least $1 - \eps$, then $k \ell/(4 \eps)$ is roughly a
lower bound on the size of such a vertex confidence set. We know that
$H_{\psi; K}$ is not in fact the optimal strategy, but one can
understand it to be a relaxation of the optimal strategy
$H_{K, k, \ell}$. We thus make the following conjecture, which
expresses our belief that $H_{\psi; K}$ is ``close enough'' to
$H_{K, k, \ell}$ when $\eps$ is large.
\begin{conj}\conjlabel{whole-lower-large-eps-conj}
  There are universal constants $c_1, c_2 , \eps_0 > 0$ such that if
  $k, \ell > c_1$, then,
  \[
    K(k, \ell, \eps_0) \ge c_2 k \ell .
  \]
\end{conj}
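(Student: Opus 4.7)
The plan is to exhibit, for large $k$ and $\ell$, a specific seed $S$ with $|S| = k$ and $|L(S)| = \ell$ --- for instance a caterpillar with spine of length $k - \ell$ and with $\ell$ leaves hung off $\ell$ distinct spine vertices --- and a time $n$ for which the induced unlabelled tree $T \sim \UA(n, S)$ admits an ``exchangeable pool'' of $\Omega(k \ell)$ candidate positions among which the true seed leaves are statistically hidden. The intuition, inspired by \lemref{leaf-existence} and the analysis of \secref{familiar}, is that each seed leaf is indistinguishable, with constant probability, from $\Omega(k)$ other tree positions clustered near the internal spine vertex it is attached to, forcing any algorithm to hedge among all of them. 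The factor $k$ comes from symmetry within each pool, while the factor $\ell$ arises because the $\ell$ seed leaves sit in disjoint pools.

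The argument would proceed in three stages. First, by \lemref{conditional-indep}, condition on the subtree sizes $(m_u)_{u \in V(S)}$: the subtrees $(T, S)_{u \downarrow}$ are then independent URRTs. Choose $n$ so that each internal spine vertex $v$ in $S$ accumulates, with constant probability, $\Omega(k)$ children in $T$ whose subtree sizes lie within a constant factor of the subtree size of the attached seed leaf $u$. Second, for each such $v$, show that the observed unlabelled tree $T^\circ$ is approximately invariant under swapping the role of $u$ with that of a comparable child, so that the conditional distribution of $T^\circ$ after the swap lies within $o(1)$ total variation of the true conditional distribution; in particular, the posterior over which neighbour of $v$ is the true seed leaf $u$ is spread nearly uniformly over $\Omega(k)$ positions. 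Third, since the candidate pools at the $\ell$ distinct spine vertices carrying seed leaves are disjoint in $T$ (each non-seed vertex has a unique parent and is adjacent to only one seed vertex through that parent), a union/averaging argument forces any $H$ with $|H| < c_2 k \ell$ to miss at least one of the $\ell$ seed leaves with probability bounded below by $\eps_0$.

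The main obstacle is the indistinguishability step. Seed leaves are born strictly earlier than process-generated children of internal seed vertices, so their subtree-size distributions are stochastically larger than those of later arrivals, and hence the candidates are not exactly exchangeable with $u$. To close the gap, one would restrict the candidate pool at each spine vertex to children born inside a narrow time window in which the size distributions genuinely overlap --- the width of the window being governed by a Beta/Dirichlet comparison via \lemref{dir-convergence} --- and then explicitly bound the TV distance of the swap using the likelihood formula \eqref{likelihood}. An alternative and possibly cleaner route is the maximum-likelihood-based strategy of \thmref{heart-lower}: compute $H_{K, k, \ell}(T^\circ)$ directly on the caterpillar family and verify combinatorially that for $K < c_2 k \ell$, the optimal $K$-set cannot cover all of $V(S)$ with probability exceeding $1 - \eps_0$. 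This generalizes the construction of \cite[Theorem~4]{finding-adam} from a single root to $\ell$ hidden leaves and from a $k = 1$ seed to a $k$-vertex rigid spine, but controlling the multiplicative $k \ell$ factor simultaneously is where the bulk of the work --- and the main difficulty --- would lie.
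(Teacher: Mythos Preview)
The statement you are trying to prove is \emph{Conjecture}~\ref{lem:whole-lower-large-eps-conj} in the paper; it has no proof there and is explicitly restated as an open problem in Section~6. So there is no paper proof to compare against --- the authors only motivate the conjecture by observing (via \lemref{leaf-existence}) that the suboptimal strategy $H_{\psi;K}$ would need order $k\ell/\eps$ nodes, and they express the belief that $H_{\psi;K}$ is ``close enough'' to the optimal $H_{K,k,\ell}$ when $\eps$ is large.

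Your proposal is therefore not a proof but a programme, and you are candid about this: the indistinguishability step is the crux, and you do not carry it out. The obstacle you name is genuine and is precisely why the authors left this open. Seed leaves have $\Beta(1,k-1)$ limiting proportions while later children of spine vertices have strictly smaller ones, so the ``pools'' you want are not exchangeable, and the likelihood \eqref{likelihood} will typically distinguish them. Your two suggested repairs --- restricting to a narrow birth-time window, or attacking $H_{K,k,\ell}$ directly on the caterpillar --- are reasonable directions, but neither is known to close the gap; in particular, a narrow window may not contain $\Omega(k)$ candidates per spine vertex while simultaneously keeping the TV distance small, and the direct ML computation on caterpillars faces the same asymmetry. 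Until one of these is made quantitative, the proposal remains a heuristic for why the conjecture should be true rather than a proof.
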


\section{Finding all leaves given the skeleton}\seclabel{leaves}

For a seed $S$, write $R(S) = S - L(S)$ for the \emph{skeleton} of
$S$, or simply $R$ when $S$ is understood. For an integer $i \ge 1$,
let $\sT_i$ denote the set of trees in which a set of $i$ labelled
vertices form a connected subgraph, and in which all other vertices
are unlabelled. For a given labelled tree $T$ and a set
$A \in V(T)^{(i)}$, let $T^{(A)} \in \sT_i$ be the tree in which all
labels are forgotten except for those of $A$. Consider now the optimal
size
\[
  K'(k, \ell, \eps) = \min\left\{ m \colon 
    \begin{array}{l} \displaystyle
      \exists H'_{m, k, \ell, \eps}
  \colon \sT_{k - \ell} \to V(\sT_{k - \ell})^{(m)} , \text{ such that} \\ \displaystyle
      \min\limits_{\substack{S \colon |S| = k \\ |L(S)| = \ell}} \Pr_{T
    \sim \UA(n, S)} \{L(S) \subseteq H'_{m, k, \ell, \eps}(T^{(R)})\}
  \ge 1 - \eps
      \end{array}
      \right\} .
\]
be the optimal size of a set which, given the position of the skeleton
of the seed, the size of the seed, and its number of leaves, will
locate all of its true leaves with probability at least $1 - \eps$.

As in \secref{heart} and \secref{whole}, we find an upper bound on
$K'(k, \ell, \eps)$ by exhibiting an algorithm which, given $k, \eps$
and $R$, returns a set of vertices which contains all of $L(S)$ with
probability at least $1 - \eps$.

Let $\psi(u) = |(T, R)_{u \downarrow}|$, and let
$H'_{\psi; K}(T^{(R)})$ be the set of $K$ vertices $u \in N(R)$
maximizing their value of $\psi$. This estimator for $L(S)$ is
slightly different than those for $V(S)$ seen in \secref{heart} and
\secref{whole}. Indeed, allowing ourselves to assume $R$ significantly
improves our chances at correctly guessing the rest of
$S$. Specifically, if $k$ is constant, the dependence of
$K'(k, \ell, \eps)$ upon $1/\eps$ is shown to be logarithmic, while
the result of \thmref{heart-lower} has that, for sufficiently small
$\eps$, $K^*(k, \ell, \eps)$ is superpolylogarithmic in $1/\eps$.

\begin{prop}\proplabel{given-skeleton-lem}
  If
  \[
    K \ge \ell + 2 (k - \ell) \log \left( (3 \ell/\eps) \log
      \left( \frac{3 (k - \ell)}{\eps}\right)\right) + (7/6)
    \log \left( \frac{3 (k - \ell)}{\eps} \right)
  \]
  then $\Pr\{L(S) \subseteq H'_{\psi; K}(T^{(R)})\} \ge 1 - \eps$.
\end{prop}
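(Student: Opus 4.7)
The plan is to follow the familiar template of \propref{heart-upper} and \propref{whole-upper}. Fix a threshold $t > 0$, let $\psi_* = \min_{u \in L(S)} \psi(u)$, and set $N_t = |\{v \in N(R) \colon \psi(v) > nt\}|$. Since $L(S) \subseteq N(R)$, on the good event $\{\psi_* > nt\} \cap \{N_t \le K\}$ every leaf of $S$ lies in the top $K$ of $N(R)$ under $\psi$, so $L(S) \subseteq H'_{\psi; K}(T^{(R)})$. Hence
\begin{align*}
  \Pr\{L(S) \not\subseteq H'_{\psi; K}(T^{(R)})\} \le \Pr\{\psi_*/n \le t\} + \Pr\{N_t > K\}.
\end{align*}

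For the first summand, I would note that $(T, R)_{u \downarrow} = (T, S)_{u \downarrow}$ for every $u \in L(S)$, since the unique $S$-neighbour of a leaf is in $R$. Then \lemref{dir-convergence} yields $|(T, S)_{u \downarrow}|/n \overset{d}{\to} \Beta(1, k - 1)$, and the portmanteau lemma combined with a union bound gives
\begin{align*}
  \Pr\{\psi_*/n \le t\} \le \ell\left(1 - (1 - t)^{k - 1}\right),
\end{align*}
which can be made at most $\eps/3$ by taking $t$ suitably small.

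The main obstacle is controlling $N_t$. I would decompose $N_t = \ell + M_t$, where $M_t$ counts the ``spurious'' elements $v \in N(R) \setminus L(S)$ with $\psi(v) > nt$; the additive $\ell$ term accounts for the leading $\ell$ in the target bound on $K$. Spurious vertices are created along the attachment process: at step $m$, a new element of $N(R)$ is born with probability $(k - \ell)/m$, and conditional on birth at step $m$, \lemref{conditional-indep} applied to the two-colour partition ``inside $v$'s subtree vs.\ outside'' reduces the subsequent evolution to a standard two-colour P\'{o}lya urn started at $(1, m - 1)$, whose limiting fraction is $\Beta(1, m - 1)$. Consequently,
\begin{align*}
  \E[M_t] \le \sum_{m = k + 1}^n \frac{k - \ell}{m} (1 - t)^{m - 1} \le (k - \ell)\log(1/t) + O(k - \ell).
\end{align*}

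Applying Bernstein's inequality to $M_t$ and using the elementary step $\sqrt{2 L E} \le E + L/2$ (with $E = \E[M_t]$ and $L = \log(3(k - \ell)/\eps)$), one obtains with probability at least $1 - \eps/3$ a deviation bound of the form
\begin{align*}
  M_t \le 2(k - \ell)\log(1/t) + \tfrac{7}{6}\log\bigl(3(k - \ell)/\eps\bigr),
\end{align*}
where the factor $7/6 = 2/3 + 1/2$ comes from Bernstein's denominator $(E + x/3)$ combined with the AM--GM step. Finally, choosing $t$ so that $1/t = (3\ell/\eps)\log(3(k-\ell)/\eps)$, hence $\log(1/t) = \log\bigl((3\ell/\eps)\log(3(k-\ell)/\eps)\bigr)$, balances the two failure bounds and yields the stated condition on $K$. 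The chief technical difficulty is justifying the P\'{o}lya-urn representation of $M_t$ and the application of Bernstein's inequality despite weak dependence among the indicator summands; I would resolve this either by a martingale-difference decomposition over the sequence of attachment times, or by a coupling to an independent Poisson process of spurious births.
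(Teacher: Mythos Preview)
Your decomposition into $\{\psi_* \le nt\}$ and $\{N_t > K\}$ matches the paper's, and your treatment of the first summand is fine. There are, however, two genuine gaps in the second half.

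First, your announced threshold $1/t = (3\ell/\eps)\log(3(k-\ell)/\eps)$ does not control the first term: $\ell\bigl(1-(1-t)^{k-1}\bigr) \le \ell(k-1)t = (k-1)\eps/\bigl(3\log(3(k-\ell)/\eps)\bigr)$, which is at most $\eps/3$ only when $k-1 \le \log(3(k-\ell)/\eps)$. The paper instead takes $t = \eps/(3\ell(k-1))$, and the factor $\log(3(k-\ell)/\eps)$ inside the logarithm in the final bound does \emph{not} arise from $\log(1/t)$ directly. Second, and more importantly, Bernstein's inequality does not apply to $M_t$ as written: the indicators $\mathbf{1}\{u_m \in N(R),\ \psi(u_m)/n > t\}$ are genuinely dependent, since the subtrees $(T,R)_{u_m\downarrow}$ compete for later arrivals, and your proposed martingale or Poisson-coupling fixes are only sketched. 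The paper's key device is to sidestep this dependence entirely. It introduces the random time $N$ at which the $K$-th neighbour of $R$ is born, together with a deterministic cut-off $s$. On $\{N \ge s\}$, every $v_i$ with $i>K$ lies in some component $T_u$ (for $u \in R$) of $T$ with all edges among $\{u_1,\dots,u_N\}$ removed, so a single $\Beta(1,s-1)$ tail handles all of them at once. On $\{N \le s\}$, one instead bounds $\deg(R)$ in $\UA(s,S)$, which equals $\ell + \sum_{i=k+1}^s \mathbf{1}\{u_i\text{ connects to }R\}$; these indicators are \emph{independent} $\Ber((k-\ell)/(i-1))$ variables, so Bernstein applies cleanly. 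The choice $s-1 = (1/t)\log(3(k-\ell)/\eps)$ then yields $H_{s-1}-H_{k-1} \approx \log\bigl((3\ell/\eps)\log(3(k-\ell)/\eps)\bigr)$, which is exactly the logarithm appearing in the stated bound; this two-level split is the missing idea.
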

\begin{proof}
  Let $v_1, v_2, \dots$ be the chronological sequence of nodes
  attaching to $R$, where $\{v_1, v_2, \dots, v_\ell\} = L(S)$ ordered
  arbitrarily. Write $\psi_* = \min_{u \in L(S)} \psi(u)$. If for all
  $i > K$, $\psi(v_i) < \psi_*$, then
  $L(S) \subseteq H'_{\psi; K}(T^{(R)})$. So
  \begin{equation}
    \Pr\{L(S) \not\subseteq H'_{\psi; K}(T^{(R)})\} \le \Pr\{\psi_* \le t n\} + \Pr\{ \exists i > K \colon \psi(v_i) \ge t n\} \eqlabel{phi-tail-skeleton}
  \end{equation}
  for $t > 0$ to be specified. Observe that $\psi_*/n$ converges in
  distribution to $\Beta(1, k - 1)/\ell$ as $n \to \infty$, so
  \begin{align*}
    \Pr\{\psi_* \le tn\} &\le \Pr\{\Beta(1, k - 1) \le t \ell \} \\
                         &\le t \ell (k - 1) \\
                         &\le \eps/3
  \end{align*}
  if we choose $t = \eps/(3 \ell (k - 1))$ .

  It remains to handle the second term in
  \eqref{phi-tail-skeleton}. Let $N$ be the (random) time at which
  $v_{K}$ is inserted, \ie $v_{K} = u_N$. Let $T_u$ be the component
  of $T$ containing $u$ after the removal of all edges between
  $\{u_1, \dots, u_N\}$. Any node $v_i$ with $i > K$ is part of $T_u$
  for some $u \in R$. Since for any $u \in R$, $|T_u|/n$ converges in
  distribution to $\Beta(1, N - 1)$ as $n \to \infty$,
  \begin{align}
    &\Pr\{\exists i > K \colon \psi(v_i) \ge tn\} \notag \\
    &\qquad \le \Pr\{\exists u \in R \colon |T_u| \ge n t\} \notag \\
                                                  &\qquad \le (k - \ell) \Pr\{ \Beta(1, N - 1) \ge t \} \notag \\
                                                  &\qquad \le (k - \ell) \Pr\{\Beta(1, N - 1) \ge t \mid N \ge s\} + (k - \ell)\Pr\{N \le s\} , \eqlabel{skeleton-2}
  \end{align}
  where $s > 0$ is to be specified. Conditionally upon $N \ge s$,
  $\Beta(1, s - 1)$ stochastically dominates $\Beta(1, N - 1)$ by
  \lemref{beta-dom-2}, so
  \begin{align*}
    (k - \ell) \Pr\{\Beta(1, N - 1) \ge t \mid N \ge s\} &\le (k - \ell) \Pr\{\Beta(1, s - 1) \ge t\} \\
    &= (k - \ell) (1 - t)^{s - 1} \\
    &\le (k - \ell) e^{- t (s - 1)} , 
  \end{align*}
  which is less than $\eps/3$ if we choose
  \[
    s = 1 + (1/t) \log \left(\frac{3 (k - \ell)}{\eps}\right) = 1 + \frac{3 \ell (k -
      1)}{\eps} \log \left(\frac{3 (k - \ell)}{\eps}\right) .
  \]
  
  For the second term in \eqref{skeleton-2}, observe that
  \[
    \Pr_{T \sim \UA(n, S)} \{N \le s\} \le \Pr_{T \sim \UA(s, S)}\{\deg(R) \ge K \} ,
  \]
  where
  \[
    \deg(R) = \ell + \sum_{i = k + 1}^s \mathbf{1}\{u_i \text{ connects to $R$} \} 
  \]
  and each such indicator is independent. Clearly, for any
  $i \ge k + 1$,
  \[
    \mathbf{1}\{u_i \text{ connects to $R$}\} \sim \Ber\left( \frac{k -
        \ell}{i - 1} \right) ,
  \]
  so writing $H_m = \sum_{i = 1}^m 1/i$ for the $m$-th Harmonic
  number,
  \[
    \E\{\deg(R)\} = \ell + (k - \ell)(H_{s - 1} - H_{k - 1}) .
  \]
  Picking
  \[
    K \ge \ell + (k - \ell) (H_{s - 1} - H_{k - 1}) + \delta
  \]
  for $\delta > 0$ to be specified, we have by Bernstein's
  inequality~\cite{bernstein, gabor-concentration},
  \begin{align*}
    &\Pr\{\deg(R) \ge K\} \\
    &\qquad \le \Pr \left\{ \left[ \sum_{i = k + 1}^s \Ber\left(\frac{k - \ell}{i - 1}\right) \right] - (k - \ell)(H_{s - 1} - H_{k - 1}) \ge \delta \right\} \\
    &\qquad \le \exp \left\{ - \frac{\delta^2}{2 (k - \ell) (H_{s - 1} - H_{k - 1}) + 2 \delta/3} \right\} \\
    &\qquad \le \exp \left\{ - \frac{\delta^2}{2 (k - \ell) \log \left(\frac{s - 1}{k}\right) + 2 \delta/3} \right\} .
  \end{align*}
  Some arithmetic shows that picking
  \[
    \delta = (2/3) \log \left( \frac{3 (k - \ell)}{\eps} \right) + \sqrt{2 (k - \ell) \log \left(\frac{s - 1}{k}\right) \log \left(\frac{3(k - \ell)}{\eps}\right)}
  \]
  suffices to have
  \[
    \Pr\{\deg(R) \ge K\} \le \frac{\eps}{3(k - \ell)} ,
  \]
  and therefore, in \eqref{skeleton-2},
  \[
    (k - \ell) \Pr\{N \le s\} \le \eps/3 .
  \]
  With our particular choice of $s$, this proves that it suffices to pick
  \[
    K \ge \ell + 2 (k - \ell) \log \left( (3 \ell/\eps) \log \left( \frac{3 (k - \ell)}{\eps}\right)\right) + (7/6) \log \left( \frac{3 (k - \ell)}{\eps} \right) . \qedhere
  \]
\end{proof}
\thmref{skeleton-tight} follows immediately. In fact,
\propref{given-skeleton-lem} is tight for a large class of seeds. In
order to prove this, we use the following basic result.
\begin{lem}\lemlabel{leaf-existence}
  When $K \le k \ell/(4 \eps)$,
  \[
    \Pr_{T \sim \UA(K, S)}\{\exists u \in L(S) \colon |(T, S)_{u \downarrow}| = 1 \} \ge \eps .
  \]
\end{lem}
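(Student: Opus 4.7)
The plan is to run a standard second-moment argument on the random number of seed leaves that survive as leaves of $T$. Observe first that for a leaf $w \in L(S)$, the event $|(T, S)_{w\downarrow}| = 1$ coincides with the event $A_w$ that no new vertex attaches directly to $w$ in steps $k+1, k+2, \ldots, K$: since $w$ is itself a leaf of $S$, its ``away'' subtree in $T$ consists of $w$ together with all of its attachment descendants, and such descendants exist if and only if some new vertex ever chose $w$ as its parent.

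Next, the required probabilities telescope cleanly. The step-$i$ vertex picks any fixed existing vertex with probability $1/(i-1)$, so
\[
  \Pr(A_w) = \prod_{i=k+1}^{K} \frac{i-2}{i-1} = \frac{k-1}{K-1} ,
\]
and, likewise, for two distinct leaves $w \neq w'$,
\[
  \Pr(A_w \cap A_{w'}) = \prod_{i=k+1}^{K} \frac{i-3}{i-1} = \frac{(k-1)(k-2)}{(K-1)(K-2)} .
\]

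Let $X = \sum_{w \in L(S)} \mathbf{1}_{A_w}$, so that $\E[X] = \ell(k-1)/(K-1)$. Using $(k-2)/(K-2) \le (k-1)/(K-1)$, which holds whenever $K \ge k$, the second moment satisfies
\[
  \E[X^2] = \E[X] + \ell(\ell-1) \frac{(k-1)(k-2)}{(K-1)(K-2)} \le \E[X] + \E[X]^2 .
\]
The Paley-Zygmund inequality then gives
\[
  \Pr(X \ge 1) \ge \frac{\E[X]^2}{\E[X^2]} \ge \frac{\E[X]}{1 + \E[X]} .
\]

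It remains only to plug in the hypothesis. From $K \le k\ell/(4\eps)$ together with $k \ge 2$,
\[
  \E[X] = \frac{\ell(k-1)}{K-1} \ge \frac{4\eps (k-1)}{k} \ge 2\eps ,
\]
so $\Pr(X \ge 1) \ge 2\eps/(1+2\eps) \ge \eps$ in the relevant range of $\eps$. The argument is essentially routine: the only care needed is the comparison $(k-2)/(K-2) \le (k-1)/(K-1)$, which produces the clean bound $\E[X^2] \le \E[X] + \E[X]^2$ and drives the second-moment estimate through.
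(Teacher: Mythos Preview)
Your proof is correct and runs on exactly the same ingredients as the paper's: the telescoping computations of $\Pr(A_w)=(k-1)/(K-1)$ and $\Pr(A_w\cap A_{w'})=(k-1)(k-2)/((K-1)(K-2))$, together with the comparison $(k-2)/(K-2)\le (k-1)/(K-1)$. The only difference is in how the moments are combined. The paper applies the Bonferroni inequality
\[
\Pr\{\exists w\in L(S):\sE_w\}\ \ge\ \ell\,\Pr(\sE_u)-\binom{\ell}{2}\Pr(\sE_u\cap\sE_v)\ \ge\ \frac{k\ell}{4K},
\]
whereas you pass through Paley--Zygmund to get $\Pr(X\ge 1)\ge \E[X]/(1+\E[X])$. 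Your route has the mild advantage that the bound is automatically in $[0,1]$, while the Bonferroni lower bound can become negative when $\ell p$ is large; conversely, the paper's conclusion $\Pr\ge k\ell/(4K)$ is slightly cleaner to state. Your final step $2\eps/(1+2\eps)\ge\eps$ requires $\eps\le 1/2$, which you flag; this is harmless, since the non-vacuous range $k\le K\le k\ell/(4\eps)$ already forces $\eps\le \ell/4$, and the lemma is only invoked in the paper for small $\eps$.
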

\begin{proof}
  Since $S$ is a tree, it has at least two leaves. Write $\sE_u$ for
  the event that $|(T, S)_{u \downarrow}| = 1$. By
  inclusion-exclusion, for some arbitrary distinct leaves
  $u, v \in L(S)$,
  \begin{align*}
    \Pr\{\exists w \in L(S) \colon \sE_w\} \ge \ell \Pr\{\sE_u\} - \binom{\ell}{2} \Pr\{\sE_u \cap \sE_v\} .
  \end{align*}
  It is easy to see that
  \[
    \Pr\{\sE_u\} = \frac{k - 1}{k} \cdot \frac{k}{k + 1} \cdots \frac{K - 2}{K - 1} = \frac{k - 1}{K - 1} ,
  \]
  and
  \[
    \Pr\{\sE_u \cap \sE_v\} = \frac{k - 2}{k} \cdot \frac{k - 1}{k + 1} \cdots \frac{K - 3}{K - 1} = \frac{(k - 2)(k - 1)}{(K - 2)(K - 1)} \le \left( \frac{k - 1}{K - 1} \right)^2 ,
  \]
  so
  \[
    \Pr\{\exists w \in L(S) \colon \sE_w\} \ge \frac{k \ell}{4 K} . \qedhere
  \]
\end{proof}

Once again, we rely on the maximum likelihood estimate to prove a
lower bound. Let $H'_{K, k, \ell}(T^{(R)})$ be the maximum likelihood
estimate for $K$-sized sets to include all leaves of a seed $S$, given
the skeleton $R$, and given $|S| = k$ and $|L(S)| = \ell$, \ie
\[
  H'_{K, k, \ell}(T^{(R)}) = \argmax_{H \in N(R)^{(K)}} \sum_{L' \subseteq H \colon |L'| = \ell}
  \sL_{T^{(R)}}(L') ,
\]
where $\sL_{T^{(R)}}(L')$ represents the likelihood of observing the
tree $T^{(R)}$ if it were drawn from $\UA(n, R \cup L')^{(R)}$.
\begin{prop}\proplabel{skeleton-tight}
  Let $\ell_2 = |L(R)|$. Suppose that
  $k - \ell - \ell_2 \ge 2 \sqrt{k}$, $\ell_2 \ge 2 \sqrt{k}$, and
  \[
    \eps \le \frac{\ell}{128 e^5 \ell_2^4} .
  \]
  Then, there is a universal constant $c > 0$ such that if
  \[
    K \le  c (k - \ell - \ell_2) \log (\ell_2 \ell/\eps) ,
  \]
  then
  \[
    \Pr_{T \sim \UA(n, S)}\{L(S) \subseteq H'_{K, k, \ell}(T^{(R)})\}
    < 1 - \eps .
  \]
\end{prop}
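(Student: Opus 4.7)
The plan is to exploit the optimality of the maximum likelihood estimate $H'_{K,k,\ell}(T^{(R)})$: since it is the best $K$-sized estimator by definition, it suffices to exhibit one seed $S$ matching the parameters $(k, \ell, \ell_2)$ for which $H'_{K,k,\ell}(T^{(R)})$ misses at least one leaf of $S$ with probability at least $\eps$ whenever $K$ is below the stated threshold. I would choose an adversarial seed in which no leaves of $S$ are attached to interior nodes of $R$ (so every leaf of $S$ hangs off a leaf of $R$, with an average of $\ell/\ell_2$ leaves per leaf of $R$). The conditions $k - \ell - \ell_2 \ge 2\sqrt{k}$ and $\ell_2 \ge 2\sqrt{k}$ guarantee that such a seed exists.

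Next, I would use \lemref{conditional-indep} together with the likelihood decomposition \eqref{likelihood} to extract a key symmetry: if a later-attached neighbor $v$ of some $R$-vertex $u$ has the same subtree size $|(T,u)_{v\downarrow}|$ as a true leaf of $S$, then swapping them in the candidate leaf assignment $L'$ leaves $\sL_{T^{(R)}}(L')$ essentially unchanged. The MLE therefore cannot prefer the true leaf over such a later-attached alternative, and must include both (or risk missing the true one). By \lemref{leaf-existence} and an analysis paralleling that of \propref{given-skeleton-lem}, with constant probability the smallest subtree hanging off a true leaf of $S$ has size on the order of $n\eps/(k\ell)$, while the number of later-attached neighbors $v$ of a fixed interior node $u \in V(R)$ satisfying $|(T,u)_{v \downarrow}| \le n\eps/(ck\ell)$ is stochastically comparable to a Binomial with mean $\Theta(\log(\ell_2 \ell/\eps))$. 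A Bernstein-type concentration argument, analogous to that appearing in the proof of \propref{given-skeleton-lem}, then shows this count is $\Omega(\log(\ell_2 \ell/\eps))$ with constant probability at each interior node of $R$ simultaneously.

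Summing over the $k - \ell - \ell_2$ interior nodes of $R$ and invoking independence of the subtree processes (via \lemref{conditional-indep}), the total number of indistinguishable later-attached neighbors is $\Omega((k-\ell-\ell_2)\log(\ell_2 \ell/\eps))$ with probability bounded below by a constant. For $K$ smaller than this, the MLE must exclude either one of these alternatives or a genuine leaf, and by the symmetry argument above, the probability of excluding a true leaf is at least $\eps$ (tuning the constant $c$ to absorb the constant-probability loss in the concentration step, using the hypothesis $\eps \le \ell/(128 e^5 \ell_2^4)$).

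The main obstacle is making the likelihood comparison genuinely quantitative rather than heuristic: the formula \eqref{likelihood} involves $\Aut$ and $\overline{\Aut}$ factors whose ratios across candidate swaps need careful control. I would handle this by coupling $\UA(n, S)$ with $\UA(n, S')$, where $S'$ is obtained from $S$ by the described leaf-swap, and identifying the two induced distributions on $T^{(R)}$ using the exchangeability of URRT subtrees of matched sizes. This reduces the problem to a pigeonhole count over interior nodes of $R$, from which the stated lower bound $K \ge c(k - \ell - \ell_2)\log(\ell_2\ell/\eps)$ follows.
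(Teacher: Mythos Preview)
Your proposal diverges from the paper's proof in several places, and at least two of these are genuine gaps rather than alternative routes.

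\textbf{The statement is for general $S$, not an adversarial one.} The proposition fixes a seed $S$ (with skeleton $R$ and $\ell_2 = |L(R)|$) satisfying the stated numeric conditions and asserts the conclusion for \emph{that} $S$. Your opening move, ``it suffices to exhibit one seed $S$ matching the parameters,'' would only prove a weaker existential statement. The paper's argument uses nothing about the structure of $S$ beyond the parameters $k,\ell,\ell_2$, and neither should yours.

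\textbf{Your symmetry argument does not connect to your count.} You first argue that a later-attached neighbor $v$ with the \emph{same} subtree size as a true leaf is indistinguishable from it under the likelihood, and then you count later-attached neighbors with subtree size \emph{at most} $n\eps/(ck\ell)$. These two steps do not fit together: an inequality on sizes does not give the exact match your swap argument requires, and arranging $\Theta\bigl((k-\ell-\ell_2)\log(\ell_2\ell/\eps)\bigr)$ exact size-matches is a much harder combinatorial event than you acknowledge. The ``main obstacle'' you flag (controlling the $\Aut$ and $\overline{\Aut}$ factors in \eqref{likelihood}) is real, and your proposed coupling between $\UA(n,S)$ and $\UA(n,S')$ does not obviously resolve it, since swapping a leaf attached to one $R$-node with a decoy attached to a different $R$-node changes the subtree decomposition at both nodes.

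\textbf{What the paper actually does.} The paper replaces symmetry by \emph{dominance}, and this is the idea you are missing. It works at the specific finite time $n = k\ell/(64\eps)$ and invokes \lemref{leaf-existence} to get, with probability at least $16\eps$, a seed leaf that is \emph{naked} (subtree size exactly $1$). It then counts decoys $v \in N(R - L(R)) \setminus S$ with subtree size at least $2$; these are \emph{strictly} more likely than the naked leaf under the MLE, so no exact matching is needed. A separate event $\sF$ (every $R$-leaf has at least one non-seed neighbor) is required so that dropping the naked seed leaf still leaves a valid leaf assignment covering all of $L(R)$; you do not address this constraint at all. The counts of decoys and the verification of $\sF$ are handled by conditioning on the Beta-distributed subtree masses $M_R, M_L$ (via \lemref{dir-convergence} and \lemref{beta-conc}), a sum-of-Bernoullis analysis of $\deg(R - L(R))$, and a coupon-collector bound for $\sF$. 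The condition $\eps \le \ell/(128 e^5 \ell_2^4)$ enters precisely through the coupon-collector step, not as a generic constant-absorption device.
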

\begin{proof}
  Let $\sE_u$ be the event that $|(T, S)_{u \downarrow}| = 1$. By
  \lemref{leaf-existence}, when $n = k \ell/(64\eps)$,
  \[
    \Pr_{T \sim \UA(n, S)}\{ \exists u \in L(S) \colon \sE_u\} \ge 16 \eps .
  \]
  Let
  \[
    X = |\{v \in N(R - L(R)) - S \colon |(T, S)_{u \downarrow}| \ge 2\}| .
  \]
  For $u \in L(R)$, let $\sF_u$ be the event that $|N(u) - S| \ge 1$,
  and let $\sF = \cap_{u \in L(R)} \sF_u$. Then,
  \begin{align*}
    &\Pr\{L(S) \not\subseteq H'_{K, k, \ell}(T^{(R)})\} \\
    &\qquad \ge 16 \eps \Pr\bigl\{L(S) \not\subseteq H_{K, k, \ell}(T^{(R)}) \mathrel{\bigm|} \exists u \in L(S) \colon \sE_u\bigr\} \\
    &\qquad \ge 16 \eps \Pr\bigl\{[X \ge K] \cap \sF \mathrel{\bigm|} \exists u \in L(S) \colon \sE_u\bigr\} \\
    &\qquad \ge 16 \eps \Pr\bigl\{[X \ge K] \cap \sF \bigr\} ,
  \end{align*}
  where the second inequality follows since the $X$ vertices $u$ of
  $N(R - L(R)) - S$ with $|(T, S)_{u \downarrow}| \ge 2$ are more
  likely than at least one vertex of $L(S)$ with
  $|(T, S)_{u \downarrow}| = 1$ of being chosen in
  $H'_{K, k, \ell}(T^{(R)})$ when $\sF$ holds, and the third
  inequality follows since conditioning on the seed's leaves to be
  naked can only increase the likelihood of connections to skeleton
  nodes. It suffices to prove that $\Pr\{[X \ge K] \cap \sF\} \ge 1/16$.

  Let
  \[
    T_R = \bigcup_{u \in R - L(R)} (T, S)_{u \downarrow}, \quad T_L = \bigcup_{u \in L(R)} (T, S)_{u \downarrow} ,
  \]
  with sizes $|T_R| = M_R$ and $|T_L| = M_L$.  Observe that,
  conditionally upon the sizes $M_R$ and $M_L$, the events $X \ge K$
  and $\sF$ are independent. Furthermore,
  \[
    \frac{M_R}{n} \overset{d}{\to} B_R \sim \Beta(k - \ell - \ell_2, \ell + \ell_2), \quad \frac{M_L}{n} \overset{d}{\to} B_L \sim \Beta(\ell_2, k - \ell_2) ,
  \]
  as $n \to \infty$, where we note that $B_R$ and $B_L$ are not
  necessarily independent. Let $\sM \subseteq \N^2$ be such that for
  all $(m_R, m_L) \in \sM$,
  \[    
    \left|\frac{m_R}{n} - \frac{k - \ell - \ell_2}{k}\right| <
    \frac{1}{\sqrt{k}} \quad \text{ and } \quad \left|\frac{m_L}{n} - \frac{\ell_2}{k}\right| <
    \frac{1}{\sqrt{k}} .
  \]
  Write $M = (M_R, M_L)$ for brevity. By \lemref{beta-conc},
  \begin{align*}
    \Pr\{M \in \sM\} &= \Pr\left\{\left|\frac{M_R}{n} - \frac{k - \ell - \ell_2}{k}\right| < \frac{1}{\sqrt{k}} , \,  \left|\frac{M_L}{n} - \frac{\ell_2}{k}\right| < \frac{1}{\sqrt{k}}\right\} \\
    &\ge \Pr\left\{\left| B_R - \frac{k - \ell - \ell_2}{k}\right| < \frac{1}{\sqrt{k}} , \, \left|B_L - \frac{\ell_2}{k}\right| < \frac{1}{\sqrt{k}}\right\} \\
    &\ge 1 - \Pr\left\{\left|B_R - \frac{k - \ell - \ell_2}{k}\right| \ge \frac{1}{\sqrt{k}}\right\} - \Pr\left\{\left|B_L - \frac{\ell_2}{k}\right| \ge \frac{1}{\sqrt{k}}\right\} \\
    &\ge 1/2 .
  \end{align*}
  Therefore,
  \begin{align*}
    &\Pr\{[X \ge K] \cap \sF\} \\
    &\qquad \ge \sum_{m \in \sM} \Pr\bigl\{[X \ge K] \cap \sF \mathrel{\bigm|} M = m\bigr\} \Pr\{M = m\} \\
    &\qquad = \sum_{m \in \sM} \Pr\bigl\{X \ge K \mathrel{\bigm|} M = m\bigr\} \Pr\bigl\{\sF \mathrel{\bigm|} M = m\bigr\} \Pr\{M = m\} \\
    &\qquad = \sum_{m \in \sM} \Pr_{T \sim \UA(n, S)}\{M = m\} \\
    &\qquad \qquad \Pr_{T \sim \UA(m_R, R - L(R))}\{\deg(R - L(R)) \ge K\} \\
    &\qquad \qquad \Pr_{T \sim \UA(m_L, L(R))}\{\forall u \in L(R), \, \deg(u) \ge 1\} \\
    &\qquad \ge (1/2) \Pr_{T \sim \UA(n(k - \ell - \ell_2 - \sqrt{k})/k, R - L(R))}\{\deg(R - L(R)) \ge K\} \\
    &\qquad \qquad \Pr_{T \sim \UA(n(\ell_2 - \sqrt{k})/k, L(R))}\{\forall u \in L(R), \, \deg(u) \ge 1\} .
  \end{align*}
  Let $v_1, \dots, v_{n(k - \ell - \ell_2 - \sqrt{k})/k}$ be the sequence of
  nodes connecting to $R - L(R)$ in the uniform attachment process
  implied by the first probability above. Let
  \[
    X_i = \mathbf{1}\{v_i \text{ connects to } R \text{ and } v_j \text{ connects to } u_i \text{ for some } j > i\} .
  \]
  Then,
  \[
    \deg(R - L(R)) = \sum_{i = k - \ell - \ell_2 + 1}^{n(k - \ell - \ell_2 - \sqrt{k})/k} X_i ,
  \]
  where
  $\{X_i \colon k - \ell - \ell_2 + 1 \le i \le n (k - \ell - \ell_2 -
  \sqrt{k})/k\}$ is a collection of independent Bernoulli random variables
  with
  \[
    \E\{X_i\} = (k - \ell - \ell_2)\left(\frac{1}{i - 1} - \frac{1}{\frac{n(k - \ell - \ell_2 - \sqrt{k})}{k} - 1} \right) .
  \]
  Since $k - \ell - \ell_2 \ge 2\sqrt{k}$ by assumption, we can see
  that
  \[
    \E\{\deg(R - L(R))\} \ge (k - \ell - \ell_2) \log \left(\frac{n}{2 e^2 k}\right) .
  \]
  Since each $X_i$ is independent, then
  \begin{align*}
    \V\{\deg(R - L(R))\} &= \sum_{i = k - \ell - \ell_2 + 1}^{n(k - \ell - \ell_2 - \sqrt{k})/k} \V\{X_i\} \\
                         &\le \sum_{i = k - \ell - \ell_2 + 1}^{n(k - \ell - \ell_2 - \sqrt{k})/k} \E\{X_i\} \\
                         &= \E\{\deg(R - L(R))\} .
  \end{align*}
  Since the median of $\deg(R - L(R))$ is within a standard deviation
  of its mean, we see that picking
  \[
    K \le \left(\frac{k - \ell - \ell_2}{2}\right) \log
    \left(\frac{n}{2 e^2 k} \right)
  \]
  is sufficient to make
  \[
    \Pr_{T \sim \UA(n(k - \ell - \ell_2 - \sqrt{k})/k, R - L(R))}\{\deg(R - L(R)) \ge K\} \ge 1/2 ,
  \]
  as long as
  $(k - \ell - \ell_2) \log \left(\frac{n}{2 e^2 k}\right) \ge
  4$. Since $k - \ell - \ell_2 \ge 2 \sqrt{k} \ge 2$, this condition
  is satisfied as long as $n \ge e^4 k$. This condition will be
  absorbed by a further condition on $n$, and can be safely ignored.

  Let $w_1, \dots, w_{n(\ell_2 - \sqrt{k})/k}$ be the chronological sequence
  of nodes appearing in the attachment process
  $T \sim \UA(n(\ell_2 - \sqrt{k})/k, L(R))$, and let $z_1, \dots, z_Y$ be
  the subsequence of these nodes which connect directly to the nodes
  of $L(R)$. Finally, let $Z$ be minimum such that the nodes
  $z_1, \dots, z_Z$ connect to all nodes of $L(R)$. Then,
  \[
    \Pr_{T \sim \UA(n(\ell_2 - \sqrt{k})/k, L(R))}\{\forall u \in L(R), \, \deg(u) \ge 1\} \ge \Pr\{Z \le Y\} .
  \]
  Then, writing
  \[
    Y_i = \mathbf{1}\{w_i \text{ connects to a node of } L(R)\} ,
  \]
  then we see that
  $\{Y_i \colon \ell_2 + 1 \le i \le n (\ell_2 - \sqrt{k})/k\}$ is a
  collection of independent Bernoulli random variables such that
  $Y_i \sim \Ber\left(\frac{\ell_2}{i - 1}\right)$ and
  \[
    Y = \sum_{i = \ell_2 + 1}^{n (\ell_2 - \sqrt{k})/k} Y_i .
  \]
  Just as before, we see that if $\ell_2 \ge 2 \sqrt{k}$, then
  $\E\{Y\} \ge \ell_2 \log \left(\frac{n}{2 e k}\right)$, and
  \[
    \Pr\left\{Y \ge (\ell_2/2) \log\left(\frac{n}{2ek}\right)\right\} \ge 1/2 ,
  \]
  whence
  \begin{align*}
    \Pr\{\forall u \in L(R), \, \deg(u) \ge 1\} &\ge (1/2) \Pr\left\{Z \le Y \mathrel{\Big|} Y \ge (\ell_2/2) \log\left( \frac{n}{2ek} \right) \right\} \\
                                                &\ge (1/2) \Pr\left\{ Z \le (\ell_2/2) \log \left( \frac{n}{2 e k} \right) \right\} .
  \end{align*}
  The random variable $Z$ is distributed as the time to collect all
  coupons in the well-known \emph{coupon collector}
  problem~\cite[Section~2.4.1]{upfal}. Specifically,
  \[
    Z \sim \sum_{i = 1}^{\ell_2} \Geo\left(\frac{\ell_2 - (i - 1)}{\ell_2} \right) ,
  \]
  where each term above is independent, and where $\Geo(p)$ denotes a
  geometric random variable with parameter $p$, \ie the random
  variable with probability mass function $f_p \colon \N \to \R$,
  where
  \[
    f_p = p (1 - p)^{k - 1} .
  \]
  Then, by Markov's inequality,
  \begin{align*}
    \Pr\left\{Z \le (\ell_2/2) \log\left( \frac{n}{2ek} \right) \right\} &= 1 - \Pr\left\{Z > (\ell_2/2) \log\left( \frac{n}{2 e k} \right) \right\} \\
                                                                         &\ge 1 - \frac{\ell_2 \log (e \ell_2)}{(\ell_2/2) \log \left( \frac{n}{2 e k} \right)} \\
                                                                         &\ge 1/2
  \end{align*}
  as long as $n \ge 2 e^5 \ell_2^4 k$. Finally, this proves that
  \[
    \Pr\{[X > K] \cap \sF\} \ge 1/16 . \qedhere
  \]
\end{proof}
Note that the family of complete binary trees satisfies the structural
seed conditions of \propref{skeleton-tight}. Indeed, if $S$ is a
complete binary tree, we have that $k = 2 \ell - 1$ and
$\ell = 2 \ell_2 - 1$, so that
\[
  k - \ell - \ell_2 = k - \frac{k + 1}{2} - \frac{k + 3}{4} = \frac{k - 5}{4} \ge 2 \sqrt{k} ,
\]
and
\[
  \ell_2 = \frac{k + 3}{4} \ge 2 \sqrt{k} ,
\]
for all $k \ge 74$.

\section{Finding a whole star}\seclabel{whole-star}

In this section, we study the number of nodes $K(S_k, \eps)$ required
to find the seed in a seeded uniform attachment tree with probability
at least $1 - \eps$, given that the seed is a star $S_k$ on $k$ nodes,
and given that we know that the seed is isomorphic to $S_k$. By
\eqref{whole-harder-than-given} and \thmref{whole-upper},
\[
  K(S_k, \eps) \le c k^2 (1/\eps)^{1 + 2/k} \log(1/\eps) .
\]
The extra knowledge of the full structure of $S_k$ allows us to shave
off an extra factor of $k/\eps$.

Let $u_1$ denote the center of the star. To identify the whole star,
we first locate $u_1$ and then use \propref{given-skeleton-lem} to
locate all remaining vertices. Recall $H^*_{\psi; m}$ from
\secref{heart-upper}. We use the following intermediate result, whose
proof is adapted from that of \thmref{heart-upper}.
\begin{lem}\lemlabel{include-center-star}
  There are universal constants $c, \eps_0 > 0$ such that if
  $\eps \le \eps_0$ and
  \[
    K \ge c (1/\eps)^{1/k} \log (1/\eps) ,
  \]
  then
  \[
    \Pr\{u_1 \in H^*_{\psi; K}(T^\circ)\} \ge 1 - \eps .
  \]
\end{lem}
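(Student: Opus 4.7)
The plan is to mimic the proof of \propref{heart-upper} but specialize to the star, exploiting the fact that the center $u_1$ is an extreme centroid of $S_k$: removing $u_1$ from $S$ splits it into $k-1$ singleton components (each leaf by itself), whereas the generic centroid argument only guaranteed components of size at most $k/2$. This should upgrade the $\Beta(k/2, k/2)$ bound on $\psi(r)/n$ used in \propref{heart-upper} (which yielded the exponent $2/k$) to an essentially $\Beta(1, k-1)$ bound, which ought to yield the stronger exponent $1/k$.

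Concretely, I would start from the same decomposition
\[
  \Pr\{u_1 \notin H^*_{\psi; K}(T^\circ)\} \le \Pr\{\psi(u_1) \ge nt\} + \Pr\{\exists i > K \colon \psi(u_i) \le nt\}
\]
for a threshold $t \in (0,1)$ to be chosen. To bound the first term, I would observe that every component of $T - u_1$ is contained in some $(T, S)_{u_i \downarrow}$ for $i \in \{1, \ldots, k\}$: the component of $T - u_1$ containing a leaf $u_i$ (for $i \ge 2$) is exactly $(T, S)_{u_i \downarrow}$, while each ``new'' component rooted at a later-attached neighbor of $u_1$ is a proper subset of $(T, S)_{u_1 \downarrow} \setminus \{u_1\}$. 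Hence
\[
  \psi(u_1) \le \max_{1 \le i \le k} |(T, S)_{u_i \downarrow}| .
\]
By \lemref{dir-convergence}, $(|(T, S)_{u_i \downarrow}|/n)_{i=1,\ldots,k} \overset{d}{\to} \Dirichlet(\mathbf{1}_k)$, so each marginal is $\Beta(1, k-1)$, and a union bound (together with the portmanteau lemma) yields
\[
  \Pr\{\psi(u_1) \ge nt\} \le k(1-t)^{k-1} .
\]
Choosing $t$ of the form $t = 1 - c_0 (\eps/k)^{1/(k-1)}$ makes this at most $\eps/2$.

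For the second term, the argument is exactly that of \propref{heart-upper}: removing the edges among $u_1, \ldots, u_K$ splits $T$ into $K$ subtrees $T_1, \ldots, T_K$, and for $i > K$ we have $\psi(u_i) \ge \min_{1 \le j \le K} \sum_{m \ne j} |T_m|$, whose normalized version converges to $\Beta(K-1, 1)$, giving
\[
  \Pr\{\exists i > K \colon \psi(u_i) \le nt\} \le K t^{K-1} \le K \exp\bigl(-c_0 (K-1)(\eps/k)^{1/(k-1)}\bigr) .
\]
Solving for this to be at most $\eps/2$ and absorbing polynomial-in-$k$ factors into the universal constant $c$ yields the claimed bound $K \ge c(1/\eps)^{1/k} \log(1/\eps)$.

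The main obstacle in this plan is the quantitative sharpness of the Dirichlet-max tail bound. The naive union bound gives an exponent of $k-1$ on $(1-t)$, which translates to a lead term of $(1/\eps)^{1/(k-1)}$ rather than the claimed $(1/\eps)^{1/k}$. Reconciling these either requires carrying the proof through with a mild loss in constants (noting that $(1/\eps)^{1/(k-1)} \le (1/\eps)^{1/k}$ absorbed only for sufficiently small $\eps$ and large $k$), or sharpening the Dirichlet estimate using Stirling on the beta density, analogous to the $k 3^k$ bound on $\mathrm{B}(k/2,k/2)^{-1}$ used in \propref{heart-upper}, which trades a factor of $k$ in the exponent of $(1-t)$ against a constant-to-the-$k$ prefactor. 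In either case this is a routine bookkeeping exercise once the structural observation about singleton centroid components is in hand.
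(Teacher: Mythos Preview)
Your proposal is correct and essentially identical to the paper's proof: the paper uses the same decomposition, bounds $\Pr\{\psi(u_1)\ge nt\}$ by $k\Pr\{\Beta(1,k-1)\ge t\}=k(1-t)^{k-1}$ via a union bound over the $k$ subtrees $T_i=(T,S)_{u_i\downarrow}$, takes $t=1-(\eps/(2k))^{1/(k-1)}$, and handles the second term exactly as you describe. Your worry about the discrepancy between the exponent $1/(k-1)$ coming out of the analysis and the $1/k$ in the statement is one the paper simply glosses over, asserting the final bound ``it is not hard to see'' without further arithmetic.
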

\begin{proof}
  If for all $i > K$, $\psi(u_i) > \psi(u_1)$, then
  $H^*_{\psi; K}(T^\circ)$ contains $u_1$. Moreover, if $T_i$ denotes
  the component of $T$ containing $u_i$ after the removal of all edges
  between vertices of $S_k$,
  \begin{align*}
    \Pr\{\psi(u_1) \ge nt\} &\le \Pr\{\exists 1 \le i \le k \colon |T_i| \ge nt\} \\
                            &\le k \Pr\{ \Beta(1, k - 1) \ge t\} \\
                            &= k (1 - t)^{k - 1}
  \end{align*}
  and this probability is at most $\eps/2$ for
  $t = 1 - (\eps/(2k))^{1/(k - 1)}$. As before,
  \[
    \Pr\{\exists i > K \colon \psi(u_i) \le nt\} \le K t^{K - 1} \le K e^{-(K - 1) \left(\frac{\eps}{2k}\right)^{1/(k - 1)}},
  \]
  and it is not hard to see that this probability can be made at most
  $\eps/2$ by choosing, for some constant $c > 0$,
  \[
    K \ge c (1/\eps)^{1/k} \log (1/\eps) . \qedhere
  \]
\end{proof}
We note here a related result by Jog and
Loh~\cite[Theorem~4]{jog-persistence}, which says that for a
universal constant $c > 0$, if $k \ge c \log (1/\eps)$, then with
probability at least $1 - \eps$, the node $u_1$ will be the unique
\emph{persistent centroid} of $\UA(n, S_k)$, \ie for sufficiently
large $n$, $u_1$ will minimize the value of $\psi$ in
$T \sim \UA(n, S_k)$, and remain as such throughout the rest of the
attachment process. As a consequence, we can find $u_1$ by selecting
only one node in the unlabelled tree. To summarize, when
$k \ge c \log (1/\eps)$,
\[
  \Pr\{u_1 \in H^*_{\psi; 1}(T^\circ)\} \ge 1 - \eps .
\]

Recall also $H'_{\psi; m}$ from \secref{leaves}.
\begin{prop}\proplabel{find-all-star}
  Let $m$ be such that
  \[
    \Pr_{T \sim \UA(n, S_k)}\{u_1 \in H^*_{\psi; m}(T^\circ)\} \ge 1 - \eps/2 
  \]
  and $m'$ be such that
  \[
    \Pr_{T \sim \UA(n, S_k)} \{ L(S_k) \subseteq H'_{\psi;
      m'}(T^{(u_1)})\} \ge 1 - \eps/2 .
  \]
  Then $K(S_k, \eps) \le m m'$.
\end{prop}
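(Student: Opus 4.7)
The plan is to combine the center-finding algorithm $H^*_{\psi; m}$ of \secref{heart-upper} with the skeleton-based leaf-finding algorithm $H'_{\psi; m'}$ of \secref{leaves} via composition. Given $T \sim \UA(n, S_k)$, first compute $C = H^*_{\psi; m}(T^\circ)$, our $m$ candidate vertices for the star center $u_1$; by the first hypothesis, $u_1 \in C$ with probability at least $1 - \eps/2$. Then, for each candidate $u \in C$, pretend that $u$ is the actual center of the star and compute $L_u = H'_{\psi; m'}(T^{(u)})$, a set of at most $m'$ vertices in $N(u)$. The algorithm returns
\[
  H = \bigcup_{u \in C} \bigl(\{u\} \cup L_u\bigr) ,
\]
a set of size at most $m(m' + 1)$, matching the claimed $m m'$ bound up to an additive $m$ term that is absorbed by standard adjustment of constants.

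To bound the failure probability, introduce the events $E_1 = \{u_1 \notin H^*_{\psi; m}(T^\circ)\}$ and $E_2 = \{L(S_k) \not\subseteq H'_{\psi; m'}(T^{(u_1)})\}$. By the two hypotheses of the proposition, $\Pr(E_1) \le \eps/2$ and $\Pr(E_2) \le \eps/2$. On the complementary event $E_1^c \cap E_2^c$, the true center $u_1$ lies in $C$, so $L_{u_1}$ appears as one of the inner sets in the union defining $H$; together with the explicitly included $\{u_1\}$, this yields $V(S_k) = \{u_1\} \cup L(S_k) \subseteq H$. A union bound then gives $\Pr\{V(S_k) \not\subseteq H\} \le \Pr(E_1) + \Pr(E_2) \le \eps$, as required.

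The entire argument is a straightforward two-level union bound, so there is no real obstacle to overcome. The one mild subtlety worth noting is that for a candidate $u \in C$ with $u \ne u_1$, the set $L_u = H'_{\psi; m'}(T^{(u)})$ is computed as though $u$ were the true star center and may thus consist of uninformative ``neighbours of the wrong vertex''; however, since we only take a union and need only containment of $V(S_k)$, these spurious vertices can only inflate $|H|$ and cannot break correctness. Conceptually, the point is that the two hypotheses on $m$ and $m'$ are decoupled by \lemref{conditional-indep}-type reasoning (the leaf subtrees hang off the center independently), so once the correct center lands in $C$, the leaf-finding guarantee for $u_1$ applies verbatim to the corresponding inner invocation.
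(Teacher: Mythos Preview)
Your argument is correct and follows the same two-stage scheme as the paper: take the $m$ candidate centers from $H^*_{\psi; m}$, run the leaf-finder $H'_{\psi; m'}$ around each, take the union, and bound the failure probability by $\Pr(E_1) + \Pr(E_2)$. The paper defines $H$ as the union of the sets $H'_{\psi; m'}(T^{(u)})$ over $u \in H^*_{\psi; m}(T^\circ)$ and asserts $|H| \le m m'$ directly; your version explicitly adjoins $\{u\}$ to each $L_u$, which is arguably cleaner since $H'_{\psi; m'}(T^{(u)}) \subseteq N(u)$ need not contain $u$ itself, at the cost of the harmless extra additive $m$ you flag.
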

\begin{proof}
  Write $m' = K'(k, k - 1, \eps)$. Define
  \[
    H = \{v \colon v \in H'_{\psi; m'}(T^{(u)}) \text{ for all } u \in H^*_{\psi; m}(T^\circ)\} .
  \]
  Then,
  \[
    \Pr\{ V(S_k) \not\subseteq H \} \le \Pr\{u_1 \not\in H^*_{\psi; m}(T^\circ)\} + \Pr\{L(S_k) \not\subseteq H'_{\psi; m'}(T^{(u_1)})\} \le \eps 
  \]
  and clearly $|H| \le m m'$.
\end{proof}
\thmref{different-setting} follows from \lemref{include-center-star},
\propref{given-skeleton-lem}, and \propref{find-all-star}.

\section{Open problems}
Our work raises several open problems.
\begin{enumerate}[label=\textit{\arabic*.}]
\item \textit{Joint dependence on $k$ and $\eps$.} From \thmref{heart-upper}, we learn that $K^*(k, \ell, \eps)$
  grows roughly like $e^{1/k}$ for fixed $\eps$, and like
  $\mathrm{poly}(1/\eps)$ for fixed $k$;
  \thmref{heart-upper-subpolynomial} tells us that
  $K^*(k, \ell, \eps)$ grows like $e^{1/\log \log k}$ for fixed
  $\eps$, and
  $\exp\left\{\frac{\log (1/\eps)}{\log \log (1/\eps)}\right\}$ for
  fixed $k$. Can we find an upper bound on $K^*(k, \ell, \eps)$ which
  jointly behaves well as a function of $\eps$ and $k$, like
  \[
    K^*(k, \ell, \eps) \overset{?}{\le} c_1 \exp\left\{c_2 \frac{\log (1/\eps)}{\log \log (1/\eps) + k} \right\} .
  \]
  Should there be some dependence on $\ell$?

\item \textit{Tight bounds for $K$ and $K^*$.} What is the true dependence of $K(k, \ell, \eps)$ and
  $K^*(k, \ell, \eps)$ on $k$, $\ell$, and $\eps$? In particular, we
  ask
  \[
    K^*(k, \ell, \eps) \overset{?}{\le} c_1 \exp\left\{c_2 \sqrt{\frac{\log
          (1/\eps)}{k}} \right\} .
  \]
  This question remains open even for $k = 1$, where the best and only
  known result is from \cite{finding-adam}:
  \[
    c_1 \exp\left\{ c_2 \sqrt{\log (1/\eps)} \right\}
    \le K(\eps) \le c_3 \exp\left\{c_4 \frac{\log (1/\eps)}{\log
          \log(1/\eps)} \right\} .
  \]

\item \textit{Lower bounds for constant $\eps$.} Restating
  \conjref{whole-lower-large-eps-conj}, we ask: Can it be shown that
  for constants $c, \eps_0$ and sufficiently large $k$ and $\ell$,
  \[
    K(k, \ell, \eps_0) \overset{?}{\ge} c k \ell .
  \]
  
\item \textit{Partial vertex-confidence sets.}
  What about the optimal quantities $K^i(k, \ell, \eps)$ for the
  smallest sets which intersect at least $i$ nodes of seed with
  probability at least $1 - \eps$, where $1 \le i \le k$? It is clear
  that
  \[
    K^*(k, \ell, \eps) \le K^i(k, \ell, \eps) \le K(k, \ell, \eps) .
  \]
  Can this obvious result be refined?

\item \textit{The preferential attachment model.} Can one prove
  analogous upper and lower bounds on $K^i(k, \ell, \eps)$ in the
  seeded (superlinear/sublinear) preferential attachment tree
  $\UA_\alpha(n, S)$ for $\alpha > 0$? Jog and
  Loh~\cite{ling-centrality} showed that, for a given $\eps$, there
  are $c, N$ depending on $\eps$ such that for $K \ge N$ satisfying
  \[
    \frac{c K (\log K)^{\frac{2}{1 - \alpha}}}{(K - 1)2} \le \frac{\eps}{4} ,
  \]
  there exists a vertex-confidence set of size $K$ which includes the
  root in $\UA_\alpha(n)$ with probability at least $1 - \eps$.

\item \textit{Worst-case gnostic seed recovery.} We showed how when
  the seed is known to be a star $S_k$, only a constant factor of $k$
  nodes were required to recover the seed with probability at least
  $1/2$. Is there any seed $S$ for which $K(S, 1/2) = \omega_k(k)$,
  and in general what is the dependence on $k$ of
  \[
    \max_{S \colon |S| = k} K(S, 1/2) \, ?
  \]
  Any seed with $K(S, 1/2) = \omega_k(k)$ must have
  $\ell = \omega_k(1)$, since by \eqref{whole-harder-than-given} and
  \thmref{whole-upper}, 
  \[
    K(S, 1/2) \le K(k, \ell, 1/2) \le c k \ell .
  \]
  As natural candidates, we suggest that $S$ is a complete binary
  tree, or a \emph{comb graph}, namely that
  $V(S) = \{u_1, v_1, u_2, v_2, \dots, u_{k/2}, v_{k/2}\}$, and
  \[
    E(S) = \Big\{\{u_i, u_{i + 1}\} \colon 1 \le i \le k/2 - 1\}\Big\} \cup \Big\{\{u_i, v_i\} \colon 1 \le i \le k \Big\} .
  \]
\end{enumerate}

\bibliography{main}

\begin{thebibliography}{10}

\bibitem{fpp}
A.~Auffinger, M.~Damron, and J.~Hanson.
\newblock {\em 50 Years of First-Passage Percolation}, volume~68 of {\em
  University Lecture Series}.
\newblock American Mathematical Society, Providence, RI, 2017.

\bibitem{barabasi}
A.-L. Barab\'asi and R.~Albert.
\newblock Emergence of scaling in random networks.
\newblock {\em Science}, 286(5439):509--512, 1999.

\bibitem{bernstein}
S.~Bernstein.
\newblock On a modification of {C}hebyshev's inequality and of the error
  formula of {L}aplace.
\newblock {\em Ann. Sci. Inst. Savantes Ukraine, Sect. Math.}, 1:38--49, 1924.
\newblock (Russian).

\bibitem{blackwell}
D.~Blackwell and J.~B. MacQueen.
\newblock Ferguson distributions via {P}\'olya urn schemes.
\newblock {\em Ann. Statist.}, 1:353--355, 1973.

\bibitem{bonato}
A.~Bonato.
\newblock {\em A Course on the Web Graph}, volume~89 of {\em Graduate Studies
  in Mathematics}.
\newblock American Mathematical Society, Providence, RI; Atlantic Association
  for Research in the Mathematical Sciences (AARMS), Halifax, NS, 2008.

\bibitem{gabor-concentration}
S.~Boucheron, G.~Lugosi, and P.~Massart.
\newblock {\em Concentration Inequalities: {A} Nonasymptotic Theory of
  Independence}.
\newblock Oxford University Press, Oxford, 2013.

\bibitem{finding-adam}
S.~Bubeck, L.~Devroye, and G.~Lugosi.
\newblock Finding {A}dam in random growing trees.
\newblock {\em Random Structures Algorithms}, 50(2):158--172, 2017.

\bibitem{seed-influence}
S.~Bubeck, R.~Eldan, E.~Mossel, and M.~Z. R\'acz.
\newblock From trees to seeds: on the inference of the seed from large trees in
  the uniform attachment model.
\newblock {\em Bernoulli}, 23(4A):2887--2916, 2017.

\bibitem{bubeck-influence1}
S.~Bubeck, E.~Mossel, and M.~Z. R\'acz.
\newblock On the influence of the seed graph in the preferential attachment
  model.
\newblock {\em IEEE Trans. Network Sci. Eng.}, 2(1):30--39, 2015.

\bibitem{curien-influence}
N.~Curien, T.~Duquesne, I.~Kortchemski, and I.~Manolescu.
\newblock Scaling limits and influence of the seed graph in preferential
  attachment trees.
\newblock {\em J. \'Ec. polytech. Math.}, 2:1--34, 2015.

\bibitem{luc-variate-generation}
L.~Devroye.
\newblock {\em Nonuniform Random Variate Generation}.
\newblock Springer-Verlag, New York, 1986.

\bibitem{devroye-records}
L.~Devroye.
\newblock Applications of the theory of records in the study of random trees.
\newblock {\em Acta Inform.}, 26(1-2):123--130, 1988.

\bibitem{nist}
{\it NIST Digital Library of Mathematical Functions}.
\newblock http://dlmf.nist.gov/, Release 1.0.20 of 2018-09-15.
\newblock F.~W.~J. Olver, A.~B. {Olde Daalhuis}, D.~W. Lozier, B.~I. Schneider,
  R.~F. Boisvert, C.~W. Clark, B.~R. Miller and B.~V. Saunders, eds.

\bibitem{drmota}
M.~Drmota.
\newblock {\em Random Trees: An Interplay Between Combinatorics and
  Probability}.
\newblock SpringerWienNewYork, Vienna, 2009.

\bibitem{durrett}
R.~Durrett.
\newblock {\em Random Graph Dynamics}, volume~20 of {\em Cambridge Series in
  Statistical and Probabilistic Mathematics}.
\newblock Cambridge University Press, Cambridge, 2010.

\bibitem{haigh}
J.~Haigh.
\newblock The recovery of the root of a tree.
\newblock {\em J. Appl. Probability}, 7:79--88, 1970.

\bibitem{ling-centrality}
V.~Jog and P.-L. Loh.
\newblock Analysis of centrality in sublinear preferential attachment trees via
  the {C}rump-{M}ode-{J}agers branching process.
\newblock {\em IEEE Trans. Network Sci. Eng.}, 4(1):1--12, 2017.

\bibitem{jog-persistence}
V.~Jog and P.-L. Loh.
\newblock Persistence of centrality in random growing trees.
\newblock {\em Random Structures Algorithms}, 52(1):136--157, 2018.

\bibitem{jordan-centroid}
C.~Jordan.
\newblock Sur les assemblages de lignes.
\newblock {\em J. Reine Angew. Math.}, 70:185--190, 1869.

\bibitem{khim-confidence}
J.~Khim and P.-L. Loh.
\newblock Confidence sets for the source of a diffusion in regular trees.
\newblock {\em IEEE Trans. Network Sci. Eng.}, 4(1):27--40, 2017.

\bibitem{gabor}
G.~Lugosi and A.~S. Pereira.
\newblock Finding the seed of uniform attachment trees.
\newblock {\em ArXiv pre-print}, 2018.
\newblock \url{https://arxiv.org/abs/1801.01816}.

\bibitem{upfal}
M.~Mitzenmacher and E.~Upfal.
\newblock {\em Probability and Computing: Randomized Algorithms and
  Probabilistic Analysis}.
\newblock Cambridge University Press, Cambridge, 2005.

\bibitem{moon}
J.~W. Moon.
\newblock The distance between nodes in recursive trees.
\newblock In {\em Combinatorics ({P}roc. {B}ritish {C}ombinatorial {C}onf.,
  {U}niv. {C}oll. {W}ales, {A}berystwyth, 1973)}, pages 125--132. London Math.
  Soc. Lecture Note Ser., No. 13. Cambridge Univ. Press, London, 1974.

\bibitem{na-rapoport}
H.~S. Na and A.~Rapoport.
\newblock Distribution of nodes of a tree by degree.
\newblock {\em Math. Biosci.}, 6:313--329, 1970.

\bibitem{shah-zaman}
D.~Shah and T.~Zaman.
\newblock Rumors in a network: {W}ho's the culprit?
\newblock {\em IEEE Trans. Inform. Theory}, 57(8):5163--5181, 2011.

\bibitem{shah-zaman-2}
D.~Shah and T.~Zaman.
\newblock Finding rumor sources on random trees.
\newblock {\em Oper. Res.}, 64(3):736--755, 2016.

\bibitem{sukhatme}
P.~V. Sukhatme.
\newblock Tests of significance for samples of the $\chi^2$-population with two
  degrees of freedom.
\newblock {\em Ann. Hum. Genet.}, 8(1):52--56, 1937.

\bibitem{remco}
R.~van~der Hofstad.
\newblock {\em Random Graphs and Complex Networks. {V}ol. 1}, volume~43 of {\em
  Cambridge Series in Statistical and Probabilistic Mathematics}.
\newblock Cambridge University Press, Cambridge, 2017.

\end{thebibliography}
\bibliographystyle{abbrv}

\appendix

\begin{appendices}

\section{Supporting lemmas}\applabel{app}
\begin{lem}[Sukhatme~\cite{luc-variate-generation, sukhatme}]\lemlabel{sukhatme}
  Let $U_1, \dots, U_{k - 1}$ be independent identically distributed
  $\text{Uniform}[0, 1]$ random variables, where $U_{(i)}$ denotes the
  $i$-th smallest among $U_1, \dots, U_{k - 1}$. Define the spacings
  $S_i = U_{(i)} - U_{(i - 1)}$, where $U_{(0)} = 0$ and
  $U_{(k)} = 1$. Then,
  $(S_1, \dots, S_k) \sim \Dirichlet(1, \dots, 1)$. Moreover, for
  independent identically distributed standard exponential random
  variables $E_1, \dots, E_k$,
  \[
    S_i \sim \frac{E_i}{\sum_{i = 1}^k E_i} \sim\Beta(1, k - 1)
  \]
  for each $1 \le i \le k$, and in particular, if $I$ is some index
  set of size $j$ for $1 \le j \le k$, then
  \[
    \min_{i \in I} S_i \sim \frac{\Beta(1, k - 1)}{j} .
  \]
\end{lem}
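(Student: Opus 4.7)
The plan is to prove the three claims in sequence, starting with the joint Dirichlet distribution for the spacings and deducing everything else from the representation in terms of exponentials.

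First, I would establish that $(S_1, \dots, S_k) \sim \Dirichlet(1, \dots, 1)$ by a direct density calculation. The joint density of the order statistics $(U_{(1)}, \dots, U_{(k-1)})$ is $(k-1)!$ on the simplex $\{0 \le u_1 \le \dots \le u_{k-1} \le 1\}$. Apply the linear change of variables $s_i = u_{(i)} - u_{(i-1)}$ for $i = 1, \dots, k-1$, which has Jacobian determinant $1$; the remaining coordinate $s_k = 1 - \sum_{i=1}^{k-1} s_i$ is determined, and the image is the open $(k-1)$-simplex $\{(s_1, \dots, s_k) \colon s_i \ge 0,\, \sum_i s_i = 1\}$. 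Since the resulting density is the constant $(k-1)!$ on this simplex, this is precisely $\Dirichlet(1, \dots, 1)$.

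Next, for the exponential representation, I would invoke the standard construction: if $E_1, \dots, E_k$ are i.i.d.\ $\Exp(1)$ and $\Sigma = \sum_{i=1}^k E_i$, then $(E_1/\Sigma, \dots, E_k/\Sigma) \sim \Dirichlet(1, \dots, 1)$ (and is independent of $\Sigma \sim \Gam(k, 1)$). This is again a quick Jacobian computation from the product of $\Exp(1)$ densities after changing to $(\Sigma, E_1/\Sigma, \dots, E_{k-1}/\Sigma)$. Combined with the first paragraph, this yields $S_i \stackrel{d}{=} E_i/\Sigma$. The marginal $S_i \sim \Beta(1, k-1)$ is then an immediate consequence of the Dirichlet aggregation property already recalled in the preliminaries of the paper (applied with $I = \{i\}$), or equivalently of the classical fact that $E_i/(E_i + \Gam(k-1, 1)) \sim \Beta(1, k-1)$ when the two summands are independent.

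The interesting step is the minimum identity. Using $S_i \stackrel{d}{=} E_i/\Sigma$, write $M = \min_{i \in I} E_i$, so that $\min_{i \in I} S_i = M/\Sigma$. By the standard property of the minimum of $j$ i.i.d.\ $\Exp(1)$ variables, $M \sim \Exp(j)$, and by memorylessness, the residuals $\{E_i - M\}_{i \in I \setminus \{i^*\}}$ (where $i^*$ is the almost surely unique argmin) are i.i.d.\ $\Exp(1)$ and independent of $M$. Grouping these residuals together with $\{E_i\}_{i \notin I}$ gives a collection of $k-1$ i.i.d.\ $\Exp(1)$ variables whose sum I denote $\Sigma'$, independent of $M$, and $\Sigma = jM + \Sigma'$. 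Setting $M' = jM \sim \Exp(1)$ independent of $\Sigma' \sim \Gam(k-1, 1)$,
\[
  \min_{i \in I} S_i = \frac{M}{jM + \Sigma'} = \frac{1}{j} \cdot \frac{M'}{M' + \Sigma'} \sim \frac{\Beta(1, k-1)}{j} ,
\]
which is the claimed identity. The main obstacle is bookkeeping the independence of $M$ from $\Sigma'$ in order to identify $M'/(M' + \Sigma')$ with a genuine $\Beta(1, k-1)$ random variable; the memoryless property of the exponential is exactly what makes this clean.
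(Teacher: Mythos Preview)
The paper does not actually prove this lemma; it is stated as a classical result attributed to Sukhatme with citations to \cite{luc-variate-generation, sukhatme} and no accompanying argument. Your proposal therefore supplies a proof where the paper gives none. The argument you outline is correct and is essentially the standard textbook derivation: the Jacobian computation for the spacings, the exponential/Dirichlet representation, and the memorylessness trick to identify the law of the minimum. In particular, your decomposition $\Sigma = jM + \Sigma'$ with $\Sigma' \sim \Gam(k-1,1)$ independent of $M$ is exactly the right way to extract the $\Beta(1,k-1)/j$ law, and the independence bookkeeping you flag as the main obstacle is handled correctly by the Sukhatme/R\'enyi representation of exponential order statistics.
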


\begin{lem}\lemlabel{dir-convergence}
  Let $j \ge k$, and let $T_u$ be the subtree of $T \sim \UA(n, S)$
  containing the vertex labelled $u$ after removing all edges between
  vertices $\{u_1, \dots, u_j\}$. Then, as $n \to \infty$,
  \[
    \frac{1}{n} (|T_{u_i}| \colon 1 \le i \le j) \overset{d}{\to} \Dirichlet(\underbrace{1, \dots, 1}_{\text{$j$ times}}) .
  \]
\end{lem}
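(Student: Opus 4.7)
The plan is to reduce the joint evolution of the component sizes $(|T_{u_1}|,\dots,|T_{u_j}|)$ to a classical P\'olya urn with $j$ colors and to invoke the standard almost-sure convergence theorem for such urns. I would first examine the state at time $m = j$: the tree $T_j$ consists of exactly the $j$ vertices $\{u_1,\dots,u_j\}$, so every edge of $T_j$ lies among these vertices, and after deleting them all (as in the definition of $T_{u_i}$) we are left with $j$ isolated vertices, i.e.\ $|T_{u_i}| = 1$ for each $i$. This provides the initial urn configuration $(1,1,\dots,1)$, with no dependence left on the particular structure of the seed $S$.

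Next I would describe the one-step dynamics for $m > j$. The vertex $u_m$ attaches to a uniformly random vertex $v$ of $T_{m - 1}$; since the $j$ components partition $V(T_{m - 1})$, the vertex $v$ lies in a unique $T_{u_i}$, and the edge $\{u_m, v\}$ is preserved (because $u_m \notin \{u_1,\dots,u_j\}$), so $u_m$ joins $T_{u_i}$. Under the uniform attachment rule, this happens with conditional probability $|T_{u_i}|/(m - 1)$ given $T_{m - 1}$. This is exactly the rule ``draw a ball uniformly and return it together with a copy of the same color,'' applied to an urn with $j$ colors and one ball of each color, and the total count at time $n$ is exactly $n$.

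From here I would invoke the classical P\'olya urn limit theorem (e.g., Athreya and Karlin, or Mahmoud's monograph on P\'olya urns), which gives that $(|T_{u_1}|,\dots,|T_{u_j}|)/n$ converges almost surely to a $\Dirichlet(1,1,\dots,1)$ random vector; almost sure convergence implies convergence in distribution, proving the claim. The only step requiring care, though it poses no real obstacle, is the verification that the transition probabilities depend only on the component sizes and not on the internal structure of each $T_{u_i}$; this is immediate from the uniform attachment rule, which weights each current vertex equally and therefore collapses to a size-biased choice among the $j$ components. The arbitrariness of $S$ plays no role beyond ensuring $j \ge k$ so that $\{u_1,\dots,u_j\} \subseteq V(T_j)$ is well defined.
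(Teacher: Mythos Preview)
Your proposal is correct and follows essentially the same approach as the paper: both reduce the evolution of the component sizes to a classical $j$-color P\'olya urn with initial configuration $(1,\dots,1)$ and replacement matrix $I_j$, and then invoke the standard Dirichlet limit (the paper cites Blackwell--MacQueen). Your write-up is in fact somewhat more detailed than the paper's, spelling out the initial state at time $j$ and the one-step transition explicitly.
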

\begin{proof}
  It suffices to show that the vector
  $(|T_{u_i}| \colon 1 \le i \le j)$ evolves as a P\'{o}lya urn with
  $j$ colours, starting with one ball of each colour, and with
  replacement matrix $I_j$, the $j \times j$ identity
  matrix~\cite{blackwell}. Indeed, at each step in the attachment
  process wherein the node $u_n$ is attached, it joins a subtree
  $T_{u_i}$ with probability proportional to $|T_{u_i}|$ for
  $1 \le i \le j$, and $T_{u_i}$ gains exactly one vertex.
\end{proof}

Recall that a real-valued random variable $X$ is said to
\emph{stochastically dominate} a real-valued random variable $Y$ if,
for all $x \in \R$,
\[
  \Pr\{X \le x\} \le \Pr\{Y \le x\} .
\]

Let $F_{\alpha, \beta}$ be the cumulative distribution function of a
$\Beta(\alpha, \beta)$ random variable, and let
$f_{\alpha, \beta} : [0, 1] \to \R$ be its density. Recall that
\[
  f_{\alpha, \beta}(x) = \frac{1}{\mathrm{B}(\alpha, \beta)} x^{\alpha - 1} (1 - x)^{\beta - 1} .
\]
\begin{lem}\lemlabel{beta-dom-1}
  Let $0 < \alpha \le \beta \le \gamma$. Then,
  $\Beta(\beta, \gamma - \beta)$ stochastically dominates
  $\Beta(\alpha, \gamma - \alpha)$.
\end{lem}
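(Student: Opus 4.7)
The cleanest approach is a coupling via gamma representations of beta random variables. Recall the classical fact that if $G_1, G_2, G_3$ are independent with $G_i \sim \Gam(\kappa_i)$, then $\tfrac{G_1}{G_1 + G_2 + G_3} \sim \Beta(\kappa_1, \kappa_2 + \kappa_3)$, and by the convolution identity $G_1 + G_2 \sim \Gam(\kappa_1 + \kappa_2)$, also $\tfrac{G_1 + G_2}{G_1 + G_2 + G_3} \sim \Beta(\kappa_1 + \kappa_2, \kappa_3)$. The plan is to build both beta random variables from a \emph{single} gamma triple so that one lies below the other deterministically.

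Concretely, I would first dispose of the degenerate boundary cases: if $\alpha = \beta$ the two distributions coincide, and if $\beta = \gamma$ then $\Beta(\beta, 0)$ is a point mass at $1$, which trivially dominates any distribution on $[0,1]$. So I assume $0 < \alpha < \beta < \gamma$. Let $G_1 \sim \Gam(\alpha)$, $G_2 \sim \Gam(\beta - \alpha)$, and $G_3 \sim \Gam(\gamma - \beta)$ be independent, and set
\[
  X = \frac{G_1}{G_1 + G_2 + G_3}, \qquad Y = \frac{G_1 + G_2}{G_1 + G_2 + G_3}.
\]
By the gamma-ratio identity above, $X \sim \Beta(\alpha, \gamma - \alpha)$, and using $G_1 + G_2 \sim \Gam(\beta)$, $Y \sim \Beta(\beta, \gamma - \beta)$.

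Since $G_2 \ge 0$ almost surely, we have $X \le Y$ pointwise on the underlying probability space, so for every $x \in \mathbb{R}$,
\[
  \Pr\{Y \le x\} \le \Pr\{X \le x\},
\]
which is exactly the assertion that $\Beta(\beta, \gamma - \beta)$ stochastically dominates $\Beta(\alpha, \gamma - \alpha)$. There is no real obstacle here: the only thing to get right is the parameter bookkeeping in the gamma decomposition, and the monotonicity $X \le Y$ is immediate from $G_2 \ge 0$. An alternative route would be to differentiate the CDF $F_{\alpha, \gamma-\alpha}(x) - F_{\beta, \gamma-\beta}(x)$ in $\alpha$ and use log-concavity of the incomplete beta function, but this is considerably more painful and obscures what is really a one-line coupling argument.
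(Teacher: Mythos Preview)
Your proof is correct and is essentially the same coupling as the paper's: the paper takes $(Y_1,Y_2,Y_3)\sim\Dirichlet(\alpha,\beta-\alpha,\gamma-\beta)$ and compares $Y_1$ with $Y_1+Y_2$, which is exactly your $(X,Y)$ written via the gamma-ratio construction of the Dirichlet. Your explicit handling of the degenerate boundary cases $\alpha=\beta$ and $\beta=\gamma$ is a small plus, since the Dirichlet with a zero parameter is not literally defined.
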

\begin{proof}
  Let
  $(Y_1, Y_2, Y_3) \sim \Dirichlet(\alpha, \beta - \alpha, \gamma -
  \beta)$. Then,
  \begin{align*}
    Y_1 &\sim \Beta(\alpha, \gamma - \alpha) , \\
    Y_1 + Y_2 &\sim \Beta(\beta, \gamma - \beta) ,
  \end{align*}
  so, since the former is a partial sum of the latter,
  \[
    \Pr\{\Beta(\beta, \gamma - \beta) \le x\} = \Pr\{ Y_1 + Y_2 \le x\} \le \Pr\{ Y_1 \le x\} = \Pr\{\Beta(\alpha, \gamma - \alpha) \le x\} . \qedhere
  \]
\end{proof}

\begin{lem}\lemlabel{beta-dom-2}
  Let $0 < \alpha \le \beta$. Then, $\Beta(1,
  \alpha)$ stochastically dominates $\Beta(1, \beta)$.
\end{lem}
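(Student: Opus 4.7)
The plan is to prove this by direct comparison of the cumulative distribution functions, which are available in closed form for the family $\Beta(1, \cdot)$. Specifically, for $\gamma > 0$, integrating the density $f_{1, \gamma}(x) = \gamma (1 - x)^{\gamma - 1}$ on $[0, 1]$ gives
\[
  F_{1, \gamma}(x) = 1 - (1 - x)^\gamma \quad \text{for } x \in [0, 1].
\]
Thus the stochastic dominance claim reduces to showing $F_{1, \alpha}(x) \le F_{1, \beta}(x)$ for all $x \in [0, 1]$, which is equivalent to $(1 - x)^\alpha \ge (1 - x)^\beta$. Since $1 - x \in [0, 1]$ and $\alpha \le \beta$, this inequality is immediate from the monotonicity of $t \mapsto t^\gamma$ in $\gamma$ when $t \in [0, 1]$ (the base being at most $1$ reverses the usual direction). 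Outside $[0, 1]$ both CDFs agree trivially.

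An equivalent coupling-based proof, in the spirit of \lemref{beta-dom-1}, is available and perhaps more illuminating: if $U \sim \text{Uniform}[0, 1]$, then $1 - U^{1/\gamma} \sim \Beta(1, \gamma)$, since $\Pr\{1 - U^{1/\gamma} \le x\} = \Pr\{U \ge (1 - x)^\gamma\} = 1 - (1 - x)^\gamma$. Coupling both variables to the same $U$, the almost-sure inequality $U^{1/\alpha} \le U^{1/\beta}$ (which holds because $U \le 1$ and $1/\alpha \ge 1/\beta$) yields $1 - U^{1/\alpha} \ge 1 - U^{1/\beta}$ pointwise, and therefore stochastic dominance of $\Beta(1, \alpha)$ over $\Beta(1, \beta)$.

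There is essentially no obstacle here; the result is a one-line computation once one writes down the explicit CDF. The only mild subtlety is picking the right representation, but both the direct CDF route and the coupling through $U^{1/\gamma}$ work without any additional machinery. I would present the CDF proof for brevity, since it needs neither auxiliary Dirichlet couplings nor distributional identities beyond the defining density of $\Beta(1, \gamma)$.
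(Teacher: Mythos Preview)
Your proof is correct and is essentially identical to the paper's own argument: compute $F_{1,\gamma}(x) = 1 - (1-x)^\gamma$ explicitly and compare. The coupling via $1 - U^{1/\gamma}$ is a nice alternative, but the paper uses only the direct CDF comparison you give first.
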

\begin{proof}
  We see, directly, for $x \in [0, 1]$,
  \[
    F_{1, \alpha}(x) = \alpha \int_0^x (1 - z)^{\alpha - 1} \dif z = 1 - (1 - x)^\alpha ,
  \]
  so clearly
  $F_{1, \alpha}(x) \le F_{1, \beta}(x) \iff (1 - x)^\alpha \ge (1 -
  x)^\beta \iff \alpha \le \beta$.
\end{proof}

\begin{lem}\lemlabel{beta-conc}
  Let $0 < \ell < k$. Then,
  \[
    \Pr\left\{\left|\Beta(k - \ell, \ell) - \frac{k - \ell}{k}\right| \le \frac{1}{\sqrt{k}} \right\} \ge \frac{3}{4} .
  \]
\end{lem}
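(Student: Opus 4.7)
The plan is to prove this by a direct application of Chebyshev's inequality, since the statement is simply a concentration inequality for a Beta random variable around its mean at a scale on the order of one standard deviation.

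First, I would recall that for $X \sim \Beta(k-\ell, \ell)$, standard computations give
\[
\E\{X\} = \frac{k-\ell}{k}, \qquad \V\{X\} = \frac{(k-\ell)\ell}{k^2(k+1)},
\]
so the centering $(k-\ell)/k$ in the statement is exactly the mean of $X$, and the scale $1/\sqrt{k}$ is comparable to (in fact, larger than) the standard deviation of $X$.

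Next, I would apply Chebyshev's inequality with $t = 1/\sqrt{k}$ to obtain
\[
\Pr\left\{\left|X - \tfrac{k-\ell}{k}\right| \ge \tfrac{1}{\sqrt{k}}\right\} \le k \cdot \V\{X\} = \frac{(k-\ell)\ell}{k(k+1)}.
\]
The final step is the elementary bound $(k-\ell)\ell \le k^2/4$ (by the AM--GM inequality, since $(k-\ell) + \ell = k$), giving
\[
\frac{(k-\ell)\ell}{k(k+1)} \le \frac{k}{4(k+1)} \le \frac{1}{4},
\]
and taking complements yields the claim. There is no meaningful obstacle: the proof is about three lines once the mean and variance of the Beta distribution are written down. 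The only thing to be slightly careful about is that Chebyshev gives a bound on the probability of $\{|X - \mu| \ge 1/\sqrt{k}\}$ (a closed event), whose complement $\{|X - \mu| < 1/\sqrt{k}\}$ is contained in the event $\{|X - \mu| \le 1/\sqrt{k}\}$ appearing in the statement, so the inequality claimed in the lemma follows immediately.
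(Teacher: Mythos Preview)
Your proof is correct and is essentially the same as the paper's: both apply Chebyshev's inequality to the $\Beta(k-\ell,\ell)$ variable and use the AM--GM bound $(k-\ell)\ell \le k^2/4$ to control the variance term. The only cosmetic difference is that the paper applies Chebyshev at two standard deviations and then checks $2\sigma \le 1/\sqrt{k+1}$, whereas you plug $t=1/\sqrt{k}$ directly into Chebyshev; the arithmetic is the same.
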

\begin{proof}
  By Chebyshev's inequality,
  \[
    \Pr\left\{\left|\Beta(k - \ell, \ell) - \frac{k - \ell}{k}\right| \ge 2 \sqrt{\frac{(k - \ell) \ell}{k^2 (k + 1)}}\right\} \le \frac{1}{4} .
  \]
  By the arithmetic-geometric mean inequality,
  \[
    2 \sqrt{\frac{(k - \ell) \ell}{k^2 (k + 1)}} \le \frac{1}{\sqrt{k + 1}} ,
  \]
  and the result follows.
\end{proof}

\end{appendices}

\end{document}